\documentclass[a4paper]{amsart}
\usepackage{amsmath,amsthm,amssymb,latexsym,epic,bbm,comment}
\usepackage{graphicx,enumerate,stmaryrd}
\usepackage[all,2cell]{xy}
\xyoption{2cell}

\usepackage[active]{srcltx}
\usepackage[parfill]{parskip}
\UseTwocells

\newtheorem{theorem}{Theorem}
\newtheorem{lemma}[theorem]{Lemma}
\newtheorem{remark}[theorem]{Remark}

\newtheorem{proposition}[theorem]{Proposition}
\newtheorem{example}[theorem]{Example}

\newcommand{\tto}{\twoheadrightarrow}

\font\sc=rsfs10
\newcommand{\cC}{\sc\mbox{C}\hspace{1.0pt}}

\newcommand{\cI}{\sc\mbox{I}\hspace{1.0pt}}

\newcommand{\cA}{\sc\mbox{A}\hspace{1.0pt}}

\newcommand{\cY}{\sc\mbox{Y}\hspace{1.0pt}}

\font\scc=rsfs7
\newcommand{\ccC}{\scc\mbox{C}\hspace{1.0pt}}

\sloppy

\setlength{\parindent}{0pt}
\setlength{\parskip}{10pt}

\begin{document}

\title[Isotypic $2$-representations]{Isotypic faithful $2$-representations\\ 
of $\mathcal{J}$-simple fiat $2$-categories}
\author{Volodymyr Mazorchuk and Vanessa Miemietz}

\begin{abstract}
We introduce the class of isotypic $2$-representations for finitary $2$-categories
and the notion of inflation of $2$-representations. Under some natural assumptions
we show that isotypic $2$-representations are equivalent to inflations of 
cell $2$-representations. 
\end{abstract}

\maketitle

\section{Introduction}\label{s0}

Classification problems are one of the driving forces in representation theory. With the advent of
$2$-representation theory, a canonical major problem is to classify all $2$-representations of 
suitable $2$-categories. Another direction is the study of uniqueness questions, which has been
pursued, for example, in \cite{CR,LW,MM1}. One natural class of $2$-categories to consider is that 
of {\em finitary} $2$-categories, which provide $2$-analogues of finite dimensional algebras. 

A systematic study of abstract $2$-representation theory of finitary $2$-categories was
initiated in \cite{MM1}--\cite{MM5}, \cite{Xa}. Inspired by \cite{BFK,BG,CR,KL,Ro,So},
among others, we especially focused on the class of {\em fiat} $2$-categories, which can be considered 
as $2$-analogues of algebras with involution. Fiat $2$-categories also appear, sometimes in disguise,
in \cite{EW,EL,MT,SS} and many other papers. The first question in representation theory is that of
classifying simple representations. Its $2$-analogue was addressed in \cite{MM5}, where 
we proposed the notion of a {\em simple transitive} $2$-representation as an appropriate 
$2$-substitute for the classical notion of a simple representation. Under certain combinatorial 
assumptions, which are satisfied for most of the inspiring examples mentioned above, we obtained
a complete classification of simple transitive $2$-representations 
up to equivalence, showing them to be {\em cell} $2$-representations, previously constructed in \cite{MM1,MM2}.

In the present article, we take the first step in the direction of classifying more complicated $2$-representations.
The starting point was an attempt to classify all transitive $2$-representations. However, it turned out
that our methods yield stronger results. We introduce the class of {\em isotypic} $2$-representations.
In full analogy with the corresponding classical notion, these are $2$-representations with only one equivalence
class of simple transitive weak Jordan-H{\"o}lder subquotients. Our main result, Theorem~\ref{thmmain}, asserts
that, under the assumption that a weakly  fiat $2$-category is $\mathcal{J}$-simple and the $2$-representation is
faithful, each such $2$-representation is obtained, up to equivalence, from a cell $2$-representation by an
easy procedure which we call {\em inflation}, see Section~\ref{s2.6}. In the special case where the $2$-category concerned is a finitary quotient of a $2$-Kac-Moody algebra, this recovers results from \cite[Section 4.3.4]{Ro2}.

Let us now briefly describe the structure of the article. In Section~\ref{s1}, we collect all necessary
preliminaries on finitary and weakly fiat $2$-categories. Section~\ref{s2} recalls the basics on $2$-representations.
It also contains a combinatorial result, Proposition~\ref{prop1}, which renders superfluous the numerical 
assumption appearing in \cite[Theorem~43]{MM1}, \cite[Section~4]{MM3}, \cite[Theorems~14,16]{MM3} 
and  \cite[Theorem~18]{MM5}. Furthermore, we introduce the notion of isotypicality of a $2$-representation
and describe the construction of inflation. The last subsection contains the statement of our main 
result, Theorem~\ref{thmmain}. Section~\ref{s3}, which constitutes the core of the paper, gives a proof of the 
main theorem in the special case of a particularly nice class of $2$-categories, the $2$-categories of projective 
functors for finite dimensional self-injective algebras. In Section~\ref{s4} we deduce the general case.
To round things off, Section~\ref{s5} contains several new constructions of finitary and fiat $2$-categories,
providing many additional examples to which our theorems apply. In an appendix, we indicate how to generalize
our previous results on fiat $2$-categories to the weakly fiat case.

\vspace{5mm}

\noindent
{\bf Acknowledgment.} A substantial part of the paper was written during a visit of the second author 
to Uppsala University, whose hospitality is gratefully acknowledged. The visit was supported by 
EPSRC grant EP/K011782/1 and by the Swedish Research Council. The first author is partially supported by the 
Swedish Research Council. The second author is partially supported by EPSRC grant EP/K011782/1. 

\section{Preliminaries on fiat $2$-categories and $2$-representations}\label{s1}

\subsection{Notation}\label{s1.1}

Throughout this paper, we work over a fixed algebraically closed field $\Bbbk$.  

By a {\em $2$-category} we mean a category which is enriched over the category of all small categories. 
Thus, a $2$-category $\cC$ consists of a collection of objects, denoted $\mathtt{i},\mathtt{j},\mathtt{k},\dots$;
for each pair $(\mathtt{i},\mathtt{j})$ of objects, a set $\cC(\mathtt{i},\mathtt{j})$ of $1$-morphisms,
denoted $\mathrm{F},\mathrm{G},\mathrm{H}, \dots$; and for each pair $(\mathrm{F},\mathrm{G})$ of 
$1$-morphisms in a fixed $\cC(\mathtt{i},\mathtt{j})$, a set 
$\mathrm{Hom}_{\ccC(\mathtt{i},\mathtt{j})}(\mathrm{F},\mathrm{G})$ of $2$-morphisms, denoted 
$\alpha,\beta,\gamma,\dots$. For $\mathtt{i}\in \cC$, the identity $1$-morphism in $\cC(\mathtt{i},\mathtt{i})$
is denoted $\mathbbm{1}_{\mathtt{i}}$ and, for a $1$-morphism $\mathrm{F}$, the corresponding identity $2$-morphism
in $\mathrm{Hom}_{\ccC(\mathtt{i},\mathtt{j})}(\mathrm{F},\mathrm{F})$ is denoted  $\mathrm{id}_{\mathrm{F}}$.
We write $\circ$ for composition of $1$-morphisms (which we often omit), 
$\circ_0$ for horizontal composition of $2$-morphisms 
and $\circ_1$ for vertical composition of $2$-morphisms. We set $\mathbf{Cat}$ to be the $2$-category of 
small categories.

\subsection{Finitary $2$-categories}\label{s1.2}

We call an additive $\Bbbk$-linear category {\em finitary} provided that it has split idempotents, finitely 
many isomorphism classes of indecomposable objects and sets of morphisms form finite dimensional 
$\Bbbk$-vec\-tor spaces. We denote by $\mathfrak{A}^f_{\Bbbk}$ the $2$-category whose
\begin{itemize}
\item objects are finitary additive $\Bbbk$-linear categories,
\item $1$-morphisms are additive $\Bbbk$-linear functors,
\item $2$-morphisms are natural transformations of functors.
\end{itemize}

Now we define a {\em finitary} $2$-category (over $\Bbbk$) to be a $2$-category $\cC$ such that
\begin{itemize}
\item it has finitely many objects;
\item for each pair $\mathtt{i},\mathtt{j}$ of objects, 
$\cC(\mathtt{i},\mathtt{j})$ belongs to $\mathfrak{A}_{\Bbbk}^f$ 
and horizontal composition is additive and $\Bbbk$-linear;
\item for every $\mathtt{i}\in\cC$, the corresponding $1$-morphism $\mathbbm{1}_{\mathtt{i}}$ is indecomposable.
\end{itemize}
We refer the reader to \cite{Le,McL} for generalities on abstract $2$-categories and to
\cite{MM1}--\cite{MM5}, \cite{Xa} for more information on finitary $2$-ca\-te\-gories.

\subsection{$2$-representations}\label{s1.3}

For a finitary $2$-category $\cC$, a {\em $2$-representation} of $\cC$ is a strict 
$2$-functor from $\cC$ to the $2$-category $\mathbf{Cat}$. A {\em finitary $2$-representation} of $\cC$ is 
a strict  $2$-functor from $\cC$ to the $2$-category $\mathfrak{A}_{\Bbbk}^f$. We usually denote $2$-representations 
by $\mathbf{M},\mathbf{N},\dots$, in particular, $\mathbf{P}_{\mathtt{i}}$ stands for the
$\mathtt{i}$-th {\em principal} $2$-representation $\cC(\mathtt{i},{}_-)$, where $\mathtt{i}\in\cC$. 
All finitary $2$-representations of $\cC$ form a $2$-category, which we denote by $\cC\text{-}\mathrm{afmod}$.
In this $2$-category, $1$-morphisms are $2$-natural transformations and $2$-morphisms are modifications 
(see \cite{Le,MM3} for details).

Two $2$-representations $\mathbf{M}$ and $\mathbf{N}$ of $\cC$ are said to be {\em equivalent} provided that 
there is a $2$-natural transformation $\Phi:\mathbf{M}\to\mathbf{N}$ such that $\Phi_{\mathtt{i}}$
is an equivalence of categories for each $\mathtt{i}$.

Abusing notation, we write $\mathrm{F}\, X$ instead of  $\mathbf{M}(\mathrm{F})\, X$ for a $1$-morphism
$\mathrm{F}$.

Consider a $2$-representation $\mathbf{M}$ of $\cC$ and assume that $\mathbf{M}(\mathtt{i})$ is additive and
idempotent split for each $\mathtt{i}\in\cC$. For every collection $X_i\in \mathbf{M}(\mathtt{i}_i)$ of 
objects, where $i\in I$, the additive closure $\mathrm{add}(\{\mathrm{F}X_i\})$, where $i\in I$ 
and $\mathrm{F}$ runs  through the set of all $1$-morphisms of $\cC$, has the structure of a $2$-representation 
of $\cC$ by restriction. We write $\mathbf{G}_{\mathbf{M}}(\{X_i:i\in I\})$ for this $2$-subrepresentation 
of $\mathbf{M}$.

\subsection{Combinatorics of finitary $2$-categories}\label{s1.4}

For a finitary $2$-category $\cC$, we denote by $\mathcal{S}(\cC)$ the set of isomorphism classes of 
indecomposable $1$-morphisms 
in $\cC$, which forms a multisemigroup by \cite[Section~3]{MM2}. This multisemigroup comes equipped with 
several natural preorders. For two $1$-morphisms $\mathrm{F}$ and $\mathrm{G}$, we say 
$\mathrm{G}\geq_L\mathrm{F}$ in the {\em left preorder} if there is a $1$-morphism $\mathrm{H}$ such that 
$\mathrm{G}$ appears, up to isomorphism, as a direct summand of $\mathrm{H}\circ \mathrm{F}$. 
A {\em left cell} is an equivalence class for this preorder. Similarly one defines the {\em right} and 
{\em two-sided} preorders $\geq_R$ and $\geq_J$ and the corresponding {\em right} and {\em two-sided} 
cells, respectively.

Note that $\geq_L$ defines a genuine partial order on the set of left cells. A similar statement
holds for $\geq_R$ and right cells, and for $\geq_J$ and two-sided cells.

\subsection{Weakly fiat and fiat $2$-categories}\label{s1.5}

For a $2$-category $\cC$, we denote by $\cC^{\mathrm{op}}$ the $2$-category obtained from 
$\cC$ by reversing both $1$- and $2$-morphisms. 

We call a finitary $2$-category $\cC$ {\em weakly fiat} if 
\begin{itemize}
\item $\cC$ is equipped with a weak equivalence $*:\cC\to \cC^{\mathrm{op}}$;
\item for any pair $\mathtt{i}, \mathtt{j}$ of objects and every $1$-morphism
$\mathrm{F}\in\cC(\mathtt{i},\mathtt{j})$, there are
$2$-mor\-phisms $\alpha:\mathrm{F}\circ\mathrm{F}^*\to
\mathbbm{1}_{\mathtt{j}}$ and $\beta:\mathbbm{1}_{\mathtt{i}}\to
\mathrm{F}^*\circ\mathrm{F}$ with the property that
$\alpha_{\mathrm{F}}\circ_1\mathrm{F}(\beta)=\mathrm{id}_{\mathrm{F}}$ and
$\mathrm{F}^*(\alpha)\circ_1\beta_{\mathrm{F}^*}=\mathrm{id}_{\mathrm{F}^*}$.
\end{itemize}
If $*$ is a weak involution, we call $\cC$ a {\em fiat} $2$-category, see \cite{MM1,MM2}.

The equivalence $*$ sends a $1$-morphism $\mathrm{F}$ to $\mathrm{F}^*$ and the image of
$\mathrm{F}$ under an inverse of $*$ will be denoted ${}^*\mathrm{F}$. Note that both
$\mathrm{F}\mapsto \mathrm{F}^*$ and $\mathrm{F}\mapsto {}^*\mathrm{F}$ are isomorphisms
between $\leq_L$ and $\leq_R$.

We note that the last condition in the paragraph above mirrors the notion of {\em rigidity} in
\cite{EGNO,ENO}, see also references therein.

\subsection{$2$-ideals}\label{s1.6}

Given a a $2$-category $\cC$, a {\em left $2$-ideal} $\cI$ of $\cC$ is a $2$-semicategory
on the same objects as $\cC$ and in which, for each pair $\mathtt{i},\mathtt{j}$ of objects, 
the set $\cI(\mathtt{i},\mathtt{j})$ is an ideal in  $\cC(\mathtt{i},\mathtt{j})$ such that
$\cI$ is closed under left horizontal multiplication with both $1$- and $2$-morphisms in $\cC$. 
{\em Right $2$-ideals} and {\em two-sided $2$-ideals} (also just called {\em $2$-ideals})
are defined similarly. For instance, principal $2$-representations are left $2$-ideals in $\cC$.

For a $2$-category $\cC$ and a $2$-representation $\mathbf{M}$ of $\cC$, an {\em ideal}
$\mathbf{I}$ in $\mathbf{M}$ consists of an ideal $\mathbf{I}(\mathtt{i})$ in 
each $\mathbf{M}(\mathtt{i})$, for $\mathtt{i}\in\cC$, which is stable under the action of $\cC$. 

\subsection{Abelianization}\label{s1.7}

For a finitary additive $\Bbbk$-linear category $\mathcal{A}$, its 
{\em abelianization} is the abelian category $\overline{\mathcal{A}}$, in which objects are
diagrams $X\overset{\eta}{\longrightarrow}Y$, for $X,Y\in \mathcal{A}$ and $\eta\in\mathcal{A}(X,Y)$,
and in which morphisms are equivalence classes of solid commutative diagrams of the form
\begin{displaymath}
\xymatrix{ 
X\ar[rr]^{\eta}\ar[d]_{\tau_1}&&Y\ar[d]^{\tau_2}\ar@{-->}[dll]_{\tau_3}\\
X'\ar[rr]_{\eta'}&&Y'
} 
\end{displaymath}
modulo the equivalence relation $\Bbbk$-linearly spanned by 
diagrams for which there is $\tau_3$ as shown by the dashed 
arrow with $\eta'\tau_3=\tau_2$, see \cite{Fr}. The category $\overline{\mathcal{A}}$
is equivalent to the left module category for the finite dimensional $\Bbbk$-algebra
\begin{displaymath}
\mathrm{End}_{\mathcal{A}}(P)^{\mathrm{op}}\quad  \text{ where }
\quad  P:=\bigoplus_{Q\in\mathrm{Ind}(\mathcal{A})/\cong}Q
\end{displaymath}
where $\mathrm{Ind}(\mathcal{A})$ denotes the collection of indecomposable objects in $\mathcal{A}$.

Given a $2$-category $\cC$ and a finitary $2$-representation $\mathbf{M}$  of $\cC$, the
{\em abelianization} of $\mathbf{M}$ is the $2$-representation $\overline{\mathbf{M}}$ of $\cC$
which sends each $\mathtt{i}\in\cC$ to the category $\overline{\mathbf{M}(\mathtt{i})}$ and
defines the action of $\cC$ on diagrams component-wise. Thus the action of each $1$-morphism 
in $\cC$ on an abelianized finitary $2$-representation is right exact.

\subsection{The $2$-category $\cC_{A}$}\label{s1.8}

Fix a basic, self-injective, non-semisimple, connected, finite dimensional $\Bbbk$-algebra $A$ and a small 
category $\mathcal{A}$ which is equivalent to $A\text{-}\mathrm{mod}$. 
Define, following \cite[Section~7.3]{MM1}, the $2$-category $\cC_{A}$ to have
\begin{itemize}
\item one object $\clubsuit$ (identified with $\mathcal{A}$);
\item direct sums of functors with summands isomorphic to the identity
functor or to tensoring with projective $A\text{-}A$-bimodules as $1$-morphisms; 
\item natural transformations of functors as $2$-morphisms.
\end{itemize}
By a {\em projective functor}, we mean a functor which is isomorphic to tensoring with a projective 
$A\text{-}A$-bimodule.

We fix a set $\{e_1,e_2,\dots,e_n\}$ of primitive, pairwise orthogonal idempotents in $A$ which sum to
the identity element $1$ in $A$, and denote, for $i,j\in\{1,2,\dots,n\}$, by $\mathrm{F}_{ij}$
an indecomposable $1$-morphism given by tensoring with the $A\text{-}A$-bimodule $Ae_i\otimes e_jA$.
Then the $2$-category $\cC_{A}$ has two two-sided cells: the 
minimal one, consisting of  the isomorphism class of the identity morphism, and the maximal
one, which we call $\mathcal{J}$, consisting of the isomorphism classes of 
$\mathrm{F}_{ij}$ for $i,j\in\{1,2,\dots,n\}$. The two-sided cell $\mathcal{J}$ decomposes into
left respectively right cells
\begin{displaymath}
\mathcal{L}_j:=\{\mathrm{F}_{ij}:i\in\{1,2,\dots,n\}\}\quad\text{ and }\quad
\mathcal{R}_i:=\{\mathrm{F}_{ij}:j\in\{1,2,\dots,n\}\},
\end{displaymath}
where $i,j\in\{1,2,\dots,n\}$. We denote by $\sigma:\{1,2,\dots,n\}\to \{1,2,\dots,n\}$ the {\em Nakayama} 
bijection which is defined by requiring
$\mathrm{soc}\,Ae_i\cong \mathrm{top}\,Ae_{\sigma(i)}$ or, equivalently,  
$Ae_i\cong \mathrm{Hom}_{\Bbbk}(e_{\sigma(i)}A,\Bbbk)$. The isomorphism
\begin{displaymath}
\mathrm{Hom}_{A}(Ae_i\otimes_{\Bbbk}e_{j}A,{}_-)\cong
\mathrm{Hom}_{\Bbbk}(e_{j}A,\Bbbk)\otimes_{\Bbbk} e_iA\otimes_A {}_-,
\end{displaymath}
see for example \cite[Section~7.3]{MM1}, implies that $(\mathrm{F}_{ij},\mathrm{F}_{\sigma^{-1}(j)i})$ 
form an adjoint pair of functors. Hence $\cC_{A}$ is weakly fiat, where $*$ is given on $1$-morphisms by
$\mathrm{F}_{ij}^*=\mathrm{F}_{\sigma^{-1}(j)i}$. Moreover, $\cC_{A}$ is fiat if and only if $A$
is weakly symmetric, that is, $\sigma$ is the identity map.

Setting $\displaystyle\mathrm{F}:=\bigoplus_{i,j=1}^n\mathrm{F}_{ij}$ we have
$\mathrm{F}\circ \mathrm{F}\cong \mathrm{F}^{\oplus \dim(A)}$ and $\mathrm{F}^*\cong \mathrm{F}$.

Every nonzero two-sided $2$-ideal in $\cC_A$ necessarily contains the identity $2$-morphism on each 
indecomposable $1$-morphism not isomorphic to the identity, see \cite[Section~3.5]{Ag}, which means that 
$\cC_A$ is {\em $\mathcal{J}$-simple} in the sense of \cite[Section~6.2]{MM2}.

\section{Transitive $2$-representations}\label{s2}

\subsection{Cell $2$-representations}\label{s2.1}

Let $\cC$ be a finitary $2$-category. Given a left cell $\mathcal{L}$ in $\cC$, there is 
$\mathtt{i}=\mathtt{i}_{\mathcal{L}}\in\cC$ with the property that every $1$-morphism in $\mathcal{L}$
has domain $\mathtt{i}$. This allows us to define $\mathbf{N}:=\mathbf{G}_{\mathbf{P}_{\mathtt{i}}}(\mathcal{L})$. 

The $2$-representation $\mathbf{N}$ has a unique maximal ideal $\mathbf{I}$ which does not contain 
$\mathrm{id}_{\mathrm{F}}$ for any $\mathrm{F}\in\mathcal{L}$, see \cite[Lemma~3]{MM5}.
The corresponding quotient $2$-functor $\mathbf{C}_{\mathcal{L}}:=\mathbf{N}/\mathbf{I}$ is said to be 
the {\em (additive) cell $2$-representation} of $\cC$ associated to $\mathcal{L}$. 

\subsection{Strongly regular cells}\label{s2.2}

Let $\cC$ be a weakly fiat $2$-category and $\mathcal{J}$ a two-sided cell in $\cC$. We say that $\mathcal{J}$
is {\em strongly regular}, see \cite[Section~4.8]{MM1}, provided that the following two conditions are satisfied:
\begin{itemize}
\item any two left (resp. right) cells in $\mathcal{J}$ are not comparable with respect to the left (resp. right) order;
\item the intersection between any right and any left cell in $\mathcal{J}$ contains precisely one isomorphism
class of indecomposable $1$-morphisms.
\end{itemize}
If $\mathcal{L}$ is a left cell belonging to a strongly regular two-sided cell, then $\mathcal{L}$ contains
a unique element $\mathrm{G}=\mathrm{G}_{\mathcal{L}}$ such that $\mathrm{G}^{*}\in \mathcal{L}$,
see Proposition~\ref{prop103}. This element is called the {\em Duflo involution} of $\mathcal{L}$. 

If $\mathcal{J}$ is a strongly regular two-sided cell and $\mathrm{F}\in\mathcal{J}$, then the component of ${}^*\mathrm{F}\circ \mathrm{F}$ which belongs to $\mathcal{J}$ is in the same left cell as $\mathrm{F}$ and in the same right cell as $\mathrm{F}^*$, and the intersection of these two cells contains a unique element $\mathrm{H}$. Hence  there is a
positive integer $\mathbf{m}_{\mathrm{F}}$ such that ${}^*\mathrm{F}\circ \mathrm{F}\cong \mathrm{H}^{\oplus \mathbf{m}_{\mathrm{F}}}\oplus \mathrm{K}$, where
no indecomposable direct summand of $\mathrm{K}$ belongs to $\mathcal{J}$.

\subsection{The numerical condition}\label{s2.3}

Let $\cC$ be a weakly fiat $2$-category and $\mathcal{L}$ a left cell in $\cC$ which belongs to a strongly 
regular two-sided cell. Set  $\mathtt{i}=\mathtt{i}_{\mathcal{L}}$ and consider the corresponding cell 
$2$-representation $\mathbf{C}_{\mathcal{L}}$, as defined in Section~\ref{s2.1}, as well as its 
abelianization $\overline{\mathbf{C}}_{\mathcal{L}}$. For each $\mathtt{j}\in\cC$, isomorphism classes 
of indecomposable projective respectively simple objects in $\overline{\mathbf{C}}_{\mathcal{L}}(\mathtt{j})$ 
are indexed by isomorphism classes of $1$-morphisms in $\mathcal{L}\cap \cC(\mathtt{i},\mathtt{j})$ and 
denoted by $P_{\mathrm{F}}$ and $L_{\mathrm{F}}$, respectively. 

By Proposition~\ref{prop102}\eqref{prop102.2} and Proposition~\ref{prop101}\eqref{prop101.1}, respectively, 
for all $\mathrm{F},\mathrm{H}\in \mathcal{L}$ we have
\begin{equation}
\label{eq1}  
\mathrm{F}\, L_{\mathrm{H}}\cong
\begin{cases}
P_{\mathrm{F}} &  \text{if }\mathrm{H}\cong \mathrm{G}_{\mathcal{L}},\\
 0 & \text{otherwise}.
\end{cases}
\end{equation}

\begin{proposition}\label{prop1}
Let $\cC$ be a weakly fiat $2$-category and $\mathcal{J}$ a strongly regular two-sided cell. Then the
function
\begin{displaymath}
\begin{array}{ccc}
\mathcal{J}&\overset{\mathbf{m}}{\longrightarrow}&\mathbb{Z}\\ 
\mathrm{F}&\mapsto &\mathbf{m}_{\mathrm{F}}
\end{array}
\end{displaymath}
is constant on right cells of $\mathcal{J}$.
\end{proposition}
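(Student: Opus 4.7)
The plan is to interpret $\mathbf{m}_{\mathrm{F}}$ as a dimension internal to a single cell $2$-representation, and then to re-express the invariant for any other $\mathrm{F}'$ of the same right cell inside that very same $2$-representation, making equality transparent. I would first observe that the element $\mathrm{H}=\mathrm{H}_{\mathrm{F}}$ of Section~\ref{s2.2} is precisely the Duflo involution $\mathrm{G}_{\mathcal{L}}$ of $\mathcal{L}:=\mathcal{L}_{\mathrm{F}}$: under strong regularity, tracking cells through $*$ and ${}^{*}$ identifies both $\mathrm{H}$ and $\mathrm{G}_{\mathcal{L}}$ as the unique member of $\mathcal{L}\cap{}^{*}\mathcal{L}$. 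Thus
\begin{displaymath}
{}^{*}\mathrm{F}\circ\mathrm{F}\;\cong\;\mathrm{G}_{\mathcal{L}}^{\oplus \mathbf{m}_{\mathrm{F}}}\oplus\mathrm{K},
\end{displaymath}
with every indecomposable summand of $\mathrm{K}$ in a two-sided cell strictly greater than $\mathcal{J}$. Applying this to $L_{\mathrm{G}_{\mathcal{L}}}$ in $\overline{\mathbf{C}}_{\mathcal{L}}$, using \eqref{eq1} for $\mathrm{G}_{\mathcal{L}}\,L_{\mathrm{G}_{\mathcal{L}}}\cong P_{\mathrm{G}_{\mathcal{L}}}$ and the fact that indecomposables in cells strictly above $\mathcal{J}$ act as zero on $\overline{\mathbf{C}}_{\mathcal{L}}$ (so $\mathrm{K}\,L_{\mathrm{G}_{\mathcal{L}}}=0$), gives ${}^{*}\mathrm{F}\,P_{\mathrm{F}}\cong P_{\mathrm{G}_{\mathcal{L}}}^{\oplus \mathbf{m}_{\mathrm{F}}}$. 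The adjunction $({}^{*}\mathrm{F},\mathrm{F})$ then yields
\begin{displaymath}
\mathbf{m}_{\mathrm{F}}\;=\;\dim_{\Bbbk}\mathrm{Hom}\bigl({}^{*}\mathrm{F}\,P_{\mathrm{F}},L_{\mathrm{G}_{\mathcal{L}}}\bigr)\;=\;\dim_{\Bbbk}\mathrm{Hom}\bigl(P_{\mathrm{F}},\mathrm{F}\,L_{\mathrm{G}_{\mathcal{L}}}\bigr)\;=\;\dim_{\Bbbk}\mathrm{End}_{\overline{\mathbf{C}}_{\mathcal{L}}}(P_{\mathrm{F}}).
\end{displaymath}

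Now take $\mathrm{F}'\in\mathcal{R}$ lying in a (possibly different) left cell $\mathcal{L}'$ and let $\mathrm{H}''$ be the unique element of $\mathcal{L}\cap{}^{*}\mathcal{L}'$. The key auxiliary step is a generalisation of \eqref{eq1}, proved along the same lines as Proposition~\ref{prop102} and Proposition~\ref{prop101} but admitting $1$-morphisms $\mathrm{X}$ in arbitrary left cells of $\mathcal{J}$: for $\mathrm{H}\in\mathcal{L}$ with $\mathcal{R}_{\mathrm{H}}={}^{*}\mathcal{L}_{\mathrm{X}}$, $\mathrm{X}\,L_{\mathrm{H}}\cong P_{\mathrm{F}_{0}}$, where $\mathrm{F}_{0}$ is the unique element of $\mathcal{L}\cap\mathcal{R}_{\mathrm{X}}$, while $\mathrm{X}\,L_{\mathrm{H}}=0$ otherwise. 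Specialising gives $\mathrm{F}'\,L_{\mathrm{H}''}\cong P_{\mathrm{F}}$ and $\mathrm{G}_{\mathcal{L}'}\,L_{\mathrm{H}''}\cong P_{\mathrm{H}''}$.

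Now repeat the computation of the first paragraph with $\mathrm{F}'$ and $L_{\mathrm{H}''}$ in place of $\mathrm{F}$ and $L_{\mathrm{G}_{\mathcal{L}}}$: the decomposition ${}^{*}\mathrm{F}'\circ\mathrm{F}'\cong\mathrm{G}_{\mathcal{L}'}^{\oplus \mathbf{m}_{\mathrm{F}'}}\oplus\mathrm{K}'$ together with the same vanishing argument for $\mathrm{K}'$ produces ${}^{*}\mathrm{F}'(\mathrm{F}'\,L_{\mathrm{H}''})\cong P_{\mathrm{H}''}^{\oplus \mathbf{m}_{\mathrm{F}'}}$, and the $({}^{*}\mathrm{F}',\mathrm{F}')$ adjunction gives $\dim_{\Bbbk}\mathrm{End}_{\overline{\mathbf{C}}_{\mathcal{L}}}(\mathrm{F}'\,L_{\mathrm{H}''})=\mathbf{m}_{\mathrm{F}'}$. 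Since $\mathrm{F}'\,L_{\mathrm{H}''}\cong P_{\mathrm{F}}$, combining with the first-paragraph formula yields $\mathbf{m}_{\mathrm{F}}=\dim_{\Bbbk}\mathrm{End}(P_{\mathrm{F}})=\mathbf{m}_{\mathrm{F}'}$.

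The main obstacle is establishing the extended action formula and the vanishing of indecomposables in cells strictly above $\mathcal{J}$ on $\overline{\mathbf{C}}_{\mathcal{L}}$. Both are expected to follow from strong regularity and the structural arguments underlying \eqref{eq1} (namely Propositions~\ref{prop101} and~\ref{prop102}), but precise verification requires careful bookkeeping of cells under the involutions $*$ and ${}^{*}$.
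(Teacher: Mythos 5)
Your first paragraph is correct and non-circular: $\mathrm{H}$ is indeed the Duflo involution $\mathrm{G}_{\mathcal{L}}$ (both are the unique element of $\mathcal{L}\cap{}^*\mathcal{L}$, cf.\ Proposition~\ref{prop103}), the summands of $\mathrm{K}$ lie in two-sided cells strictly above $\mathcal{J}$ and hence annihilate $\overline{\mathbf{C}}_{\mathcal{L}}$, and the adjunction computation giving $\mathbf{m}_{\mathrm{F}}=\dim\mathrm{End}(P_{\mathrm{F}})$ is sound (it is the same computation as in Lemma~\ref{lem10305}, extended from $\mathrm{G}$ to all of $\mathcal{L}$). The gap is exactly where you flag it: the ``extended action formula''. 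What Propositions~\ref{prop101}, \ref{prop102} and \ref{prop104}\eqref{prop104.1}--\eqref{prop104.4} yield for $1$-morphisms outside $\mathcal{L}$ is only $\mathrm{F}'\,L_{\mathrm{H}''}\cong P_{\mathrm{F}}^{\oplus c}$ and $\mathrm{G}_{\mathcal{L}'}\,L_{\mathrm{H}''}\cong P_{\mathrm{H}''}^{\oplus d}$ for some $c,d\geq 1$; rerunning your adjunction count with these multiplicities gives $c^2\,\mathbf{m}_{\mathrm{F}}=d\,\mathbf{m}_{\mathrm{F}'}$, which does not imply the claim. The multiplicity-one statement you need is precisely Proposition~\ref{prop104}\eqref{prop104.5} (indecomposability of $\mathrm{H}\,L_{\mathrm{F}}$ for arbitrary $\mathrm{H}\in\mathcal{J}$), and the paper's proof of that statement invokes Proposition~\ref{prop1}. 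Indeed, in \cite[Theorem~43]{MM1} this multiplicity-one property was available only under the ``additional numerical condition'' that Proposition~\ref{prop1} is designed to eliminate (see Remark~\ref{rem2}), so it cannot be obtained ``along the same lines as Propositions~\ref{prop101} and~\ref{prop102}''; as written, your argument is circular.

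The paper avoids this by never leaving the set $\mathcal{L}\cup\mathcal{L}^*$: elements of $\mathcal{L}$ act on $\overline{\mathbf{C}}_{\mathcal{L}}$ as indecomposable projective functors by \eqref{eq1} and \cite[Lemma~13]{MM5}, and for their adjoints $\mathrm{F}^*$, $\mathrm{F}\in\mathcal{L}$, multiplicity one is automatic from uniqueness of adjoints together with self-injectivity of the underlying algebras (Proposition~\ref{prop104}\eqref{prop104.1}): the adjoint of $A_{\mathtt{j}}e_k\otimes e_1A_{\mathtt{i}}\otimes_{A_{\mathtt{i}}}{}_-$ is the indecomposable functor $A_{\mathtt{i}}e_{\sigma^{-1}(1)}\otimes e_kA_{\mathtt{j}}\otimes_{A_{\mathtt{j}}}{}_-$. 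This makes $\mathbf{m}_{\mathrm{F}^*}=\dim(e_1A_{\mathtt{i}}e_{\sigma^{-1}(1)})$ visibly independent of $\mathrm{F}\in\mathcal{L}$, and a general right cell $\mathcal{R}$ is treated by choosing $\mathcal{L}:={}^*\mathcal{R}$, so that $\mathcal{R}=\mathcal{L}^*$. The repair for your argument is the same: instead of fixing $\mathcal{L}=\mathcal{L}_{\mathrm{F}}$ for one element $\mathrm{F}$ of the right cell and computing the action of the other elements $\mathrm{F}'$ (which are not adjoints of elements of $\mathcal{L}$, so their multiplicities are not controlled), work in $\overline{\mathbf{C}}_{{}^*\mathcal{R}}$, where every element of $\mathcal{R}$ is of the form $\mathrm{H}^*$ with $\mathrm{H}\in{}^*\mathcal{R}$ and its action is determined by adjunction.
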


\begin{proof}
Let $\mathcal{L}$ be a left cell in $\mathcal{J}$. Consider the corresponding $2$-representations
$\mathbf{C}_{\mathcal{L}}$ and $\overline{\mathbf{C}}_{\mathcal{L}}$. From \eqref{eq1} 
and \cite[Lemma~13]{MM5}, it follows that each $\mathrm{F}\in \mathcal{L}$ is represented in
$\overline{\mathbf{C}}_{\mathcal{L}}$ by an indecomposable projective functor.

For each $\mathtt{j}\in \cC$, denote by $A_{\mathtt{j}}$ the basic algebra whose module category is
equivalent to $\overline{\mathbf{C}}_{\mathcal{L}}(\mathtt{j})$, and fix some set 
$\{e^{\mathtt{j}}_1,e^{\mathtt{j}}_2,\dots,e^{\mathtt{j}}_{n_{\mathtt{j}}}\}$ of primitive pairwise orthogonal 
idempotents in $A_{\mathtt{j}}$ which sum to the identity. Then, without loss of generality, we may 
assume that the projective functors corresponding to $\mathrm{F}\in \mathcal{L}$ are precisely 
$A_{\mathtt{j}}e^{\mathtt{j}}_k\otimes_{\Bbbk}e^{\mathtt{i}}_1A_{\mathtt{i}}$, where $k=1,2,\dots,n_{\mathtt{j}}$.

By Proposition~\ref{prop104}\eqref{prop104.1}, the algebra $A_{\mathtt{j}}$ is self-injective.
The computation in \cite[Lemma~45]{MM1} shows that $\mathrm{F}^*$ are represented by 
projective functors corresponding to 
$A_{\mathtt{i}}e^{\mathtt{i}}_{\sigma^{-1}(1)}\otimes_{\Bbbk}e^{\mathtt{j}}_kA_{\mathtt{j}}$.
Hence $m_{\mathrm{F}^*}=\dim(e^{\mathtt{i}}_1A_{\mathtt{i}}e^{\mathtt{i}}_{\sigma^{-1}(1)})$ and this clearly does not
depend on the choice of $\mathrm{F}\in \mathcal{L}$. Since $\mathrm{F}\mapsto \mathrm{F}^*$ defines a bijection
from $\mathcal{L}$ to the right cell in $\mathcal{J}$ containing $\mathrm{G}^*$, the function $\mathbf{m}$
is constant on this right cell. The fact that every right cell contains a Duflo involution,
which follows from Proposition~\ref{prop103}\eqref{prop103.3}, completes the proof.
\end{proof}

\begin{remark}\label{rem2}
{\rm
Proposition~\ref{prop1} implies that the additional numerical condition which appears in 
\cite[Theorem~43]{MM1}, \cite[Section~4]{MM3}, \cite[Theorems~14,~16]{MM3} and  \cite[Theorem~18]{MM5}
is redundant.
} 
\end{remark}

\subsection{Transitive and simple transitive $2$-representations}\label{s2.4}

A finitary $2$-representations $\mathbf{M}$ of a finitary $2$-category
$\cC$ is called  {\em transitive} if, for each $\mathtt{i}$ 
and each non-zero object $X\in\mathbf{M}(\mathtt{i})$, we have $\mathbf{G}_{\mathbf{M}}(\{X\})=\mathbf{M}$.
If $\mathbf{M}$ is transitive, then, by \cite[Lemma~4]{MM5}, it has a unique maximal ideal $\mathbf{I}$ 
which does not contain any identity morphisms with the exception of the one for the zero object. 
We call $\mathbf{M}$ {\em simple transitive} if $\mathbf{I}=0$. In general, the quotient  $\underline{\mathbf{M}}$ 
of $\mathbf{M}$ by $\mathbf{I}$ is called the {\em simple transitive quotient} of $\mathbf{M}$. 

If $\cC$ is weakly fiat, then \cite[Proposition~6]{MM5}, \cite[Theorem~43]{MM1} and Section~\ref{s7} show that
all cell $2$-representations associated to left cells of strongly regular two-sided cells are simple transitive.
Furthermore, \cite[Theorem~18]{MM5} and Theorem~\ref{thm106} assert that, under the assumption that all two-sided 
cells of $\cC$ are strongly regular, each simple transitive $2$-representation of $\cC$ is equivalent 
to a cell $2$-representation.

\subsection{Simple transitive subquotients of finitary $2$-representations}\label{s2.5}

Let $\cC$ be a finitary $2$-category.
For a finitary $2$-representation  $\mathbf{M}$ of $\cC$, denote by $\mathrm{Ind}(\mathbf{M})$ the set of
isomorphism classes of indecomposable objects in $\coprod_{\mathtt{i}\in\ccC}\mathbf{M}(\mathtt{i})$.
Assume that $\mathbf{X}\subset \mathrm{Ind}(\mathbf{M})$ is stable under the induced action of $\cC$,  that is 
any indecomposable direct summand of $\mathrm{F}\, X$ belongs to $\mathbf{X}$ for any $X\in \mathbf{X}$
and any $1$-morphism $\mathrm{F}\in \cC$. Then the additive closure in 
$\coprod_{\mathtt{i}\in\ccC}\mathbf{M}(\mathtt{i})$ of all objects $X$ whose isomorphism 
class is in $\mathbf{X}$ inherits the structure of a $2$-representation of $\cC$
by restriction. We will denote this $2$-representation by $\mathbf{M}_{\mathbf{X}}$. 

If $\mathbf{X}\subset \mathbf{Y}\subset \mathrm{Ind}(\mathbf{M})$ are such that both
$\mathbf{M}_{\mathbf{X}}$ and $\mathbf{M}_{\mathbf{Y}}$ are defined and the quotient
$\mathbf{M}_{\mathbf{Y}}/\mathbf{M}_{\mathbf{X}}$ is transitive, then the simple transitive quotient of
$\mathbf{M}_{\mathbf{Y}}/\mathbf{M}_{\mathbf{X}}$ is called a {\em simple transitive subquotient}
of $\mathbf{M}$. By \cite[Theorem~8]{MM5}, the multiset of  simple transitive subquotients
of $\mathbf{M}$ (up to equivalence) is an invariant of $\mathbf{M}$.
For example, for weakly fiat $\cC$ with strongly regular two-sided cells, 
simple transitive subquotients of principal $2$-representations are
exactly cell $2$-representations.

\subsection{Isotypic $2$-representations}\label{s2.6}

A finitary $2$-representation  $\mathbf{M}$ of a finitary $2$-category $\cC$ is called {\em isotypic} provided 
that all simple transitive subquotients of $\mathbf{M}$ are equivalent. For example, all transitive
$2$-representations of $\cC$ are isotypic and so are direct sums of copies of the same transitive $2$-representation.

We now give a more sophisticated example. We first recall the construction of a tensor product of 
$\Bbbk$-linear categories, see \cite[Section~(1.1)]{GK} or \cite[\S~5.18]{KV}. For two finitary $\Bbbk$-linear
categories $\mathcal{A}$ and $\mathcal{B}$, their {\em tensor product} $\mathcal{A}\boxtimes\mathcal{B}$
is the category with objects 
\begin{displaymath}
\bigoplus_{i=1}^n X_i\boxtimes Y_i, \quad \text{ where }\quad X_i\in \mathcal{A}, Y_i\in \mathcal{B}, 
\end{displaymath}
and the morphism space
\begin{displaymath}
\mathrm{Hom}_{\mathcal{A}\boxtimes\mathcal{B}} 
\left(\bigoplus_{i=1}^n X_i\boxtimes Y_i, \bigoplus_{j=1}^m U_j\boxtimes V_j\right)
\end{displaymath}
is given by the matrix 
\begin{displaymath}
\big[\mathrm{Hom}_{\mathcal{A}}(X_i,U_j)\otimes_{\Bbbk}\mathrm{Hom}_{\mathcal{B}}(Y_i,V_j)
\big]_{i=1,2,\dots,n}^{j=1,2,\dots,m}
\end{displaymath}
with the obvious composition. In the above, we use the usual abbreviation
\begin{displaymath}
\bigoplus_{i=1}^n W_i:=W_1\oplus W_2\oplus\dots \oplus W_n, 
\end{displaymath}
which, in particular, means that  $X_1\boxtimes Y_1\oplus X_2\boxtimes Y_2$ and
$X_2\boxtimes Y_2\oplus X_1\boxtimes Y_1$ are different but isomorphic objects.

Let now $\mathbf{M}$ be a finitary $2$-representation of $\cC$ and $\mathcal{A}$ any finitary 
$\Bbbk$-linear category. Define the {\em $\mathcal{A}$-inflation $\mathbf{M}^{\boxtimes \mathcal{A}}$ 
of $\mathbf{M}$} as follows:
\begin{itemize}
\item for $\mathtt{i}\in\cC$, set $\mathbf{M}^{\boxtimes \mathcal{A}}(\mathtt{i}):=
\mathbf{M}(\mathtt{i})\boxtimes \mathcal{A}$;
\item for a $1$-morphism $\mathrm{F}\in\cC(\mathtt{i},\mathtt{j})$, set 
$\mathbf{M}^{\boxtimes \mathcal{A}}(\mathrm{F}):=
\mathbf{M}(\mathrm{F})\boxtimes\mathrm{Id}_{\mathcal{A}}$;
\item for a $2$-morphism $\alpha:\mathrm{F}\to \mathrm{G}$, define
$\mathbf{M}^{\boxtimes \mathcal{A}}(\alpha):=\mathbf{M}(\alpha)\boxtimes\mathrm{id}_{\mathrm{Id}_{\mathcal{A}}}$. 
\end{itemize}

\begin{proposition}\label{prop2}
Any inflation of any transitive $2$-representation $\mathbf{M}$ of $\cC$ is isotypic. 
\end{proposition}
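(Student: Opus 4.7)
The plan is to exhibit a $\cC$-stable filtration of $\mathbf{M}^{\boxtimes \mathcal{A}}$ in which every transitive subquotient has simple transitive quotient equivalent to $\underline{\mathbf{M}}$; in view of \cite[Theorem~8]{MM5}, this forces every simple transitive subquotient of $\mathbf{M}^{\boxtimes \mathcal{A}}$ to be equivalent to $\underline{\mathbf{M}}$, which is precisely isotypicality.

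First, since $\Bbbk$ is algebraically closed, the indecomposable objects of $\mathbf{M}^{\boxtimes \mathcal{A}}(\mathtt{i})=\mathbf{M}(\mathtt{i})\boxtimes\mathcal{A}$ are, up to isomorphism, precisely the $X\boxtimes Y$ with $X\in\mathrm{Ind}(\mathbf{M}(\mathtt{i}))$ and $Y\in\mathrm{Ind}(\mathcal{A})$, and $\mathrm{F}(X\boxtimes Y)\cong(\mathrm{F}X)\boxtimes Y$ for any $1$-morphism $\mathrm{F}\in\cC$. Pick representatives $Y_1,\dots,Y_k$ of the isomorphism classes in $\mathrm{Ind}(\mathcal{A})$ and set $\mathbf{X}_l$ to be the set of isomorphism classes of the $X\boxtimes Y_j$ with $j\le l$. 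Each $\mathbf{X}_l$ is stable under $\cC$ by the displayed formula, and the subquotient $\mathbf{N}_l:=\mathbf{M}^{\boxtimes \mathcal{A}}_{\mathbf{X}_l}/\mathbf{M}^{\boxtimes \mathcal{A}}_{\mathbf{X}_{l-1}}$ is transitive: given $X_1\boxtimes Y_l$ and $X_2\boxtimes Y_l$, transitivity of $\mathbf{M}$ supplies some $\mathrm{F}$ with $X_2$ a summand of $\mathrm{F}X_1$, so $X_2\boxtimes Y_l$ is a summand of $\mathrm{F}(X_1\boxtimes Y_l)$.

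Next, I would construct a $2$-natural transformation $\pi_l\colon\mathbf{N}_l\to\underline{\mathbf{M}}$ sending $X\boxtimes Y_l$ to the image $\underline{X}$ of $X$ in $\underline{\mathbf{M}}$, and sending a morphism to the result of first projecting to its $(Y_l,Y_l)$-block in the natural blockwise decomposition, then applying the augmentation $\mathrm{End}_{\mathcal{A}}(Y_l)\twoheadrightarrow\mathrm{End}_{\mathcal{A}}(Y_l)/\mathrm{rad}\cong\Bbbk$ to the second tensor factor (well-defined because $Y_l$ is indecomposable and $\Bbbk$ is algebraically closed, making $\mathrm{End}_{\mathcal{A}}(Y_l)$ local with residue field $\Bbbk$), and finally projecting the first factor to $\underline{\mathbf{M}}$. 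The main technical point of the proof is that $\pi_l$ descends to the subquotient: any morphism factoring through an object of $\mathbf{M}^{\boxtimes\mathcal{A}}_{\mathbf{X}_{l-1}}$ contributes to the $(Y_l,Y_l)$-block only through compositions $Y_l\to Y_j\to Y_l$ with $j<l$, and any such composition lies in $\mathrm{rad}(\mathrm{End}_{\mathcal{A}}(Y_l))$ (a unit in a local ring cannot factor through a non-isomorphic indecomposable), so it is killed by the augmentation.

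Finally, $\pi_l$ is essentially surjective (every $\underline{X}$ is the image of $X\boxtimes Y_l$) and full (any morphism in $\underline{\mathbf{M}}$ lifts to $\mathbf{M}$ and pulls back as the tensor with $\mathrm{id}_{Y_l}$), so its kernel $\mathbf{K}_l$ is an ideal of $\mathbf{N}_l$ with $\mathbf{N}_l/\mathbf{K}_l\simeq\underline{\mathbf{M}}$; moreover $\mathbf{K}_l$ does not contain the identity of any non-zero object of $\mathbf{N}_l$, because $\mathrm{id}_{\underline{X}}\neq 0$ in the simple transitive quotient $\underline{\mathbf{M}}$. By \cite[Lemma~4]{MM5}, $\mathbf{N}_l$ has a unique maximal such ideal $\mathbf{I}_l$, so $\mathbf{K}_l\subseteq\mathbf{I}_l$, and since $\mathbf{N}_l/\mathbf{K}_l\simeq\underline{\mathbf{M}}$ is simple transitive the image of $\mathbf{I}_l$ in this quotient must vanish, forcing $\mathbf{K}_l=\mathbf{I}_l$. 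Thus the simple transitive quotient of $\mathbf{N}_l$ is $\underline{\mathbf{M}}$ for every $l$, and the invariance of the multiset of simple transitive subquotients \cite[Theorem~8]{MM5} completes the proof.
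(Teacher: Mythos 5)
Your proof is correct and follows essentially the same route as the paper: the same filtration of $\mathbf{M}^{\boxtimes\mathcal{A}}$ indexed by the indecomposables of $\mathcal{A}$, the same mechanism of killing the radical of the local algebra $\mathrm{End}_{\mathcal{A}}(Y_l)$ together with the other indecomposables, and the same final appeal to the Weak Jordan--H{\"o}lder Theorem. The only (cosmetic) difference is packaging: the paper quotients $\mathbf{M}^{\boxtimes\mathcal{A}}$ by the ideal $\coprod_{\mathtt{i}}\mathbf{M}(\mathtt{i})\boxtimes\mathcal{I}$ and identifies the result with $\mathbf{M}$, whereas you map each layer directly onto $\underline{\mathbf{M}}$ and identify the kernel with the unique maximal ideal.
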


\begin{proof}
Let $\mathcal{A}$ be a finitary $\Bbbk$-linear category. Let $X_1,X_2,\dots,X_n$ and $Y_1,Y_2,\dots,Y_m$ be 
full and irredundant lists of representatives of isomorphism classes of indecomposable objects 
in $\coprod_{\mathtt{i}\in\ccC}\mathbf{M}(\mathtt{i})$ and $\mathcal{A}$, respectively. 

For $k=0,1,2,\dots,m$, let $\mathbf{X}_k$ denote the subset of $\mathrm{Ind}(\mathbf{M}^{\boxtimes \mathcal{A}})$ 
with representatives $X_i\boxtimes Y_j$, where $i=1,2,\dots,n$ and $j=1,2,\dots,k$. Then, 
for $k=1,2,\dots,m$, we have the corresponding $2$-subrepresentation 
$\mathbf{M}^{\boxtimes \mathcal{A}}_{\mathbf{X}_k}$ and the quotient
\begin{displaymath}
\mathbf{M}^{\boxtimes \mathcal{A}}_{\mathbf{X}_k}/\mathbf{M}^{\boxtimes \mathcal{A}}_{\mathbf{X}_{k-1}}.
\end{displaymath}
By transitivity of $\mathbf{M}$, the latter quotient is transitive.

Now we claim that the simple transitive quotient of 
$\mathbf{M}^{\boxtimes \mathcal{A}}_{\mathbf{X}_k}/\mathbf{M}^{\boxtimes \mathcal{A}}_{\mathbf{X}_{k-1}}$
is equivalent to $\underline{\mathbf{M}}$. Without loss of generality, we may assume $k=m$. Denote by
$\mathcal{I}$ the ideal of $\mathcal{A}$ generated by the identity morphisms on all $Y_j$ with $j<m$,
together with all radical morphisms of the local algebra $\mathrm{End}_{\mathcal{A}}(Y_m)$. 
Then 
\begin{displaymath}
\mathbf{I}:=\coprod_{\mathtt{i}\in\ccC}\left(\mathbf{M}(\mathtt{i})\boxtimes \mathcal{I} \right)
\end{displaymath}
is an ideal in $\mathbf{M}^{\boxtimes \mathcal{A}}$ invariant under the action of $\cC$. 
Sending $X_i\boxtimes Y_m$ to $X_i$, for $i=1,2,\dots,n$, defines a $2$-natural transformation from
$\mathbf{M}^{\boxtimes \mathcal{A}}/\mathbf{I}$ to $\mathbf{M}$, which is an equivalence by construction.
Consequently, the simple transitive quotients of $\mathbf{M}^{\boxtimes \mathcal{A}}/\mathbf{I}$
and $\mathbf{M}$ are equivalent.

The above, together with the Weak Jordan-H{\"o}lder Theorem \cite[Theorem~8]{MM5}, shows that any 
simple transitive subquotient of $\mathbf{M}^{\boxtimes \mathcal{A}}$ is equivalent to 
$\underline{\mathbf{M}}$. Therefore, $\mathbf{M}^{\boxtimes \mathcal{A}}$ is isotypic.
\end{proof}

\subsection{Classification of isotypic faithful $2$-representations 
of $\mathcal{J}$-simple weakly fiat $2$-categories}\label{s2.7}

The following is the main result of this paper.

\begin{theorem}\label{thmmain}
Let $\cC$ be a weakly fiat $2$-category with a unique maximal two-sided cell  $\mathcal{J}$ and $\mathcal{L}$ 
a left cell in $\mathcal{J}$. Assume that $\mathcal{J}$ is strongly regular and that $\cC$ is $\mathcal{J}$-simple.
Then any isotypic faithful $2$-representation of $\cC$ is equivalent to an inflation of $\mathbf{C}_{\mathcal{L}}$.
\end{theorem}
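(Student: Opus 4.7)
The proof, following the paper's plan for Sections~\ref{s3} and~\ref{s4}, naturally divides into two stages: first establish the theorem for $\cC=\cC_{A}$ with $A$ self-injective, then reduce the general case to this one.

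\textbf{Stage 1 (the case $\cC=\cC_{A}$).} Let $\mathbf{M}$ be isotypic and faithful. I would work in the abelianization $\overline{\mathbf{M}}$, using \eqref{eq1}, Proposition~\ref{prop1} and the Weak Jordan--H\"older Theorem, in three substeps. First, I would partition the isomorphism classes of simple objects of $\overline{\mathbf{M}}(\clubsuit)$ into \emph{packets} indexed by a finite set $T$. By isotypicality every simple transitive subquotient is equivalent to $\mathbf{C}_{\mathcal{L}}$, and \eqref{eq1} controls how indecomposable $1$-morphisms in $\mathcal{J}$ permute or annihilate simples, so each packet is in natural bijection with $\mathcal{L}$; faithfulness forces $T\neq\emptyset$. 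Second, I would construct the finitary $\Bbbk$-linear category $\mathcal{A}$ whose indecomposables are the projective covers $P_{t}$ of the Duflo-position simples $L_{t}$ in $\overline{\mathbf{M}}(\clubsuit)$ for $t\in T$; note that each $P_{t}$ lifts to an indecomposable of $\mathbf{M}(\clubsuit)$. The morphism spaces of $\mathcal{A}$ are obtained by splitting $\mathrm{Hom}_{\mathbf{M}(\clubsuit)}(P_{t},P_{s})$ into a $\cC_{A}$-part, coming from $\mathrm{End}_{\mathbf{C}_{\mathcal{L}}(\clubsuit)}$ of the Duflo indecomposable, and a complementary multiplicity part which one declares to be $\mathrm{Hom}_{\mathcal{A}}(P_{t},P_{s})$; strong regularity should underwrite the well-definedness of composition.

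Third, I would define a $2$-natural transformation $\Psi:\mathbf{C}_{\mathcal{L}}^{\boxtimes\mathcal{A}}\to\mathbf{M}$ by $\Psi(\mathrm{F}_{i1}\boxtimes P_{t})=\mathrm{F}_{i1}\,P_{t}$, evaluated inside $\mathbf{M}(\clubsuit)$. Essential surjectivity follows from \eqref{eq1}, since every indecomposable of $\mathbf{M}(\clubsuit)$ appears as a summand of $\mathrm{F}\,P_{t}$ for some $\mathrm{F}\in\mathcal{J}$ and $t\in T$. Full faithfulness reduces to the tensor-product decomposition
\begin{displaymath}
\mathrm{Hom}_{\mathbf{M}(\clubsuit)}(\mathrm{F}_{i1}\,P_{t},\mathrm{F}_{j1}\,P_{s})\;\cong\;\mathrm{Hom}_{\mathbf{C}_{\mathcal{L}}(\clubsuit)}(\mathrm{F}_{i1},\mathrm{F}_{j1})\otimes_{\Bbbk}\mathrm{Hom}_{\mathcal{A}}(P_{t},P_{s}),
\end{displaymath}
which one proves by transferring via the adjunctions $(\mathrm{F}_{ij},\mathrm{F}_{\sigma^{-1}(j)i})$ to hom spaces $\mathrm{Hom}(P_{t},\mathrm{H}\,P_{s})$ and then unpacking the cell decomposition of the relevant $\mathrm{H}\in\mathcal{J}$.

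\textbf{Stage 2 (general case).} For a weakly fiat $\mathcal{J}$-simple $\cC$ with a faithful isotypic $\mathbf{M}$, the action of $\cC$ on $\mathbf{C}_{\mathcal{L}}$ induces a $2$-functor $\Phi:\cC\to\cC_{A}$, where $A$ is the basic self-injective algebra (by Proposition~\ref{prop104}\eqref{prop104.1}) modelling $\overline{\mathbf{C}}_{\mathcal{L}}(\mathtt{i}_{\mathcal{L}})$. $\mathcal{J}$-simplicity forces the kernel of $\Phi$ to vanish, and strong regularity matches the cell structures on $\cC$ and $\cC_{A}$, showing that $\Phi$ is a biequivalence onto its image. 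Pushing $\mathbf{M}$ forward to a faithful isotypic representation of this image inside $\cC_{A}$, Stage~1 applies and the resulting inflation equivalence pulls back along $\Phi$ to yield $\mathbf{M}\simeq\mathbf{C}_{\mathcal{L}}^{\boxtimes\mathcal{A}}$.

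\textbf{Main obstacle.} The technical heart is the second and third substeps of Stage~1, specifically the verification that $\mathcal{A}$ can be defined so that the tensor-product decomposition above holds for all indices without mixing between the $A$-factor and the $\mathcal{A}$-factor. This rigidity is forced by the combined use of strong regularity, the adjunction structure, and self-injectivity of $A$, but identifying the correct complement that defines $\mathrm{Hom}_{\mathcal{A}}$ and checking the decomposition is the most delicate part of the argument.
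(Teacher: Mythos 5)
Your overall architecture (build an inflation category from the ``multiplicity direction'' of $\mathbf{M}$, prove a tensor decomposition of Hom spaces, then reduce the general case to $\cC_A$) matches the paper's plan, and your Stage~2 is close in spirit to Section~\ref{s4} (the paper restricts $\mathbf{M}$ to the $2$-full subcategory $\cC_{\mathcal{J}}$ and invokes Theorem~\ref{thm105} to identify it with some $\cC_{A,X}$, rather than constructing a $2$-functor $\cC\to\cC_A$, but these are cosmetically different versions of the same reduction). The problem is in Stage~1, at exactly the point you flag as the ``main obstacle'': your construction of $\mathcal{A}$ does not work as stated, and the missing idea is the actual content of the paper's Section~\ref{s3}.

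Concretely, you propose to take the projective covers $P_t$ of the Duflo-position simples and to \emph{declare} $\mathrm{Hom}_{\mathcal{A}}(P_t,P_s)$ to be a ``complementary multiplicity part'' of $\mathrm{Hom}_{\mathbf{M}(\clubsuit)}(P_t,P_s)$. There is no canonical such complement, and no reason a chosen complement is closed under composition; asserting that ``strong regularity should underwrite the well-definedness'' begs the question, because the existence of a multiplicative splitting $\mathrm{End}(\hat P)\cong A\otimes_{\Bbbk}B$ is precisely the theorem to be proved (Proposition~\ref{prop23}). The paper's resolution is to replace the projectives by the quotients $N_i=P_i/\mathrm{und}^1_{\boldsymbol{\alpha}}(P_i)$, where $\boldsymbol{\alpha}=1\otimes\mathrm{rad}(A^{\mathrm{op}})\subset\mathrm{End}_{\ccC_A}(\mathrm{F})$, i.e.\ the largest quotients killed by the ``internal radical'' of $\mathrm{F}$; the inflation category is $\mathcal{B}=\mathrm{add}(\hat N_s)$ and $B=\mathrm{End}(\hat N_s)$. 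Making this work requires the annihilation-filtration machinery of Section~\ref{s3.1} and, crucially, Proposition~\ref{prop14}: $\mathrm{F}\,N_s\cong P$, i.e.\ applying $\mathrm{F}$ to these non-projective quotients returns a projective object. That statement (proved via the filtration of $\mathrm{F}\,N_s$ by copies of the $N_i$ in Proposition~\ref{prop12}, the dimension count \eqref{eq5}--\eqref{eq9}, and the pullback/splitting argument in diagram \eqref{eq27}) is what forces the Hom spaces to factor, and it has no counterpart in your outline. A secondary gap: your ``packets'' step silently assumes that $1$-morphisms in $\mathcal{J}$ permute-or-annihilate the simples of $\overline{\mathbf{M}}(\clubsuit)$ as in \eqref{eq1}; that formula holds in $\mathbf{C}_{\mathcal{L}}$, not a priori in $\mathbf{M}$, and even the block-diagonal (rather than merely block-triangular) form of $[\mathrm{F}]_{\mathbf{M}}$ requires the positivity argument of Lemma~\ref{lem6}.
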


There are many examples of $2$-categories which satisfy all assumptions of Theorem~\ref{thmmain},
obtained from the $2$-categories of Soergel bimodules in type $A$, $2$-Kac-Moody algebras and projective 
functors on self-injective finite dimensional algebras. Namely, choosing any left cell in any of these
$2$-categories, and factoring out the annihilator of the corresponding cell $2$-representation, we
obtain a weakly fiat $\mathcal{J}$-simple $2$-category with a unique maximal two-sided cell $\mathcal{J}$, 
and this cell is, moreover, strongly regular.

\section{Proof of Theorem~\ref{thmmain} for $\cC=\cC_A$}\label{s3}

\subsection{Annihilation filtrations}\label{s3.1}

Let $A$ be a connected self-injective finite dimensional $\Bbbk$-algebra and $\cC_A$ the corresponding weakly fiat
$2$-category as defined in Section~\ref{s1.8}. Let $\mathbf{M}$ be a finitary $2$-representation of $\cC_A$
and $\overline{\mathbf{M}}$ its abelianization. For 
$\mathrm{F},\mathrm{G}\in \cC(\clubsuit,\clubsuit)$, $\alpha:\mathrm{F}\to \mathrm{G}$
and $X\in \overline{\mathbf{M}}(\clubsuit)$, we say that {\em $\alpha$ annihilates $X$} provided that 
the linear map $\alpha_X:\mathrm{F}\, X\to \mathrm{G}\, X$ is zero.

\begin{lemma}\label{lem3}
{\hspace{2mm}}

\begin{enumerate}[$($i$)$]
\item\label{lem3.1} If $\alpha$ annihilates $Y$, and $X$ is a subobject of $Y$, then   $\alpha$ annihilates $Y$.
\item\label{lem3.2} If $\alpha$ annihilates $Y$, and $Z$ is a quotient of $Y$, then   $\alpha$ annihilates $Z$.
\item\label{lem3.3} Any $X\in \overline{\mathbf{M}}(\clubsuit)$ has a unique maximal submodule and a unique maximal
quotient which are annihilated by $\alpha$. 
\end{enumerate}
\end{lemma}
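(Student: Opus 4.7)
The proof rests on one background fact and then is a short diagram chase. Since $\cC_A$ is weakly fiat, every $1$-morphism $\mathrm{F}$ admits both a left and a right adjoint in $\cC_A$, and consequently $\mathbf{M}(\mathrm{F})$, extended to the abelianization $\overline{\mathbf{M}}(\clubsuit)$, is exact. This is the single ingredient beyond the definitions and naturality of $\alpha$.

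For part~\eqref{lem3.1} (reading the stated conclusion as ``$\alpha$ annihilates $X$'', since otherwise the assertion is a tautology), let $\iota\colon X\hookrightarrow Y$. Naturality of $\alpha$ gives $\mathrm{G}(\iota)\circ\alpha_X=\alpha_Y\circ\mathrm{F}(\iota)$, whose right-hand side vanishes by hypothesis. Exactness of $\mathrm{G}$ makes $\mathrm{G}(\iota)$ monic, forcing $\alpha_X=0$. Part~\eqref{lem3.2} is dual: for $\pi\colon Y\tto Z$, naturality supplies $\alpha_Z\circ\mathrm{F}(\pi)=\mathrm{G}(\pi)\circ\alpha_Y=0$, and right-exactness of $\mathrm{F}$ makes $\mathrm{F}(\pi)$ epic, whence $\alpha_Z=0$.

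For part~\eqref{lem3.3}, I would show that the class of annihilated subobjects of $X$ is closed under finite sums and that the class of submodules $K\subseteq X$ for which $X/K$ is annihilated is closed under finite intersections. For the first, if $X_1,X_2$ are annihilated then by additivity of $\mathrm{F}$, $\mathrm{G}$ and $\alpha$ so is $X_1\oplus X_2$, and~\eqref{lem3.2} yields that the quotient $X_1+X_2$ is annihilated. For the second, if $X/K_1$ and $X/K_2$ are annihilated then so is $X/K_1\oplus X/K_2$, and~\eqref{lem3.1} gives that the subobject $X/(K_1\cap K_2)\hookrightarrow X/K_1\oplus X/K_2$ is annihilated. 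Since $X$ has finite length in $\overline{\mathbf{M}}(\clubsuit)$, each class has a unique extremal element, which produces the desired maximal annihilated subobject and maximal annihilated quotient.

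No genuine obstacle arises: the lemma is formal once exactness of the action on the abelianization is in hand, and that in turn follows from the biadjunctions supplied by the weakly fiat structure of $\cC_A$. The only mild care required is in interpreting the apparent typo in~\eqref{lem3.1} and in noting that the ``$\alpha$-annihilated'' property is preserved by direct sums, which is immediate from $2$-functoriality.
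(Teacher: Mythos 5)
Your proposal is correct and follows essentially the same route as the paper: parts (i) and (ii) via the exactness of the action (coming from the adjunctions) applied to the commutative naturality square, and part (iii) via closure of the relevant classes under $X_1+X_2$ (as a quotient of $X_1\oplus X_2$) and $K_1\cap K_2$ (via $X/(K_1\cap K_2)\hookrightarrow X/K_1\oplus X/K_2$). You also correctly identified the typo in (i), whose conclusion should read ``$\alpha$ annihilates $X$''.
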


\begin{proof}
A short exact sequence $X\hookrightarrow Y\tto Z$ gives rise to the commutative diagram
\begin{displaymath}
\xymatrix{
0\ar[r] & \mathrm{F}\, X\ar[r]\ar[d]_{\alpha_{X}} & \mathrm{F}\, Y\ar[r]\ar[d]_{\alpha_{Y}} 
& \mathrm{F}\, Z \ar[r]\ar[d]_{\alpha_{Z}}& 0\\
0\ar[r] & \mathrm{G}\, X\ar[r] & \mathrm{G}\, Y\ar[r] & \mathrm{G}\, Z \ar[r]& 0
}
\end{displaymath}
with exact rows. If $\alpha_Y$ is zero, then so are $\alpha_X$ and $\alpha_Z$, which proves claims~\eqref{lem3.1}
and \eqref{lem3.2}. 

We prove the second part of claim~\eqref{lem3.3}, the first being proved similarly.
For $i=1,2$, consider short exact  sequences $K_i\hookrightarrow X\tto Y_i$
and assume that $\alpha_{Y_i}=0$ for $i=1,2$. Note that $\alpha_{Y_1\oplus Y_2}=0$ by additivity.
Then the isomorphism theorem implies that  $X/(K_1\cap K_2)$ is a subobject of $Y_1\oplus Y_2$
and hence is annihilated by $\alpha$ thanks to claim~\eqref{lem3.1}.
\end{proof}

For a collection $\boldsymbol{\alpha}:=\{\alpha_1,\alpha_2,\dots,\alpha_n\}$ of $2$-morphisms,
we say that {\em $\boldsymbol{\alpha}$ annihilates $X$} if $\alpha_i$ annihilates $X$ for each $i$.
Similarly, for a finite dimensional vector space of $2$-morphisms, we will say that it 
{\em annihilates} $X$ if some finite generating set of this space annihilates $X$.

Let $\boldsymbol{\alpha}$ be a fixed collection or a finite dimensional vector space of $2$-morphisms.
A filtration
\begin{equation}\label{eq2}
0=X_0\subset X_1\subset \dots \subset X_k=X
\end{equation}
is called an {\em annihilation filtration} for $\boldsymbol{\alpha}$ provided that each subquotient
$X_i/X_{i-1}$ is  annihilated by $\boldsymbol{\alpha}$. An annihilation filtration is called {\em minimal},
if there does not exist any annihilation filtration of strictly smaller length (not necessarily refining
the given one). The length of a minimal annihilation filtration is called the {\em annihilation length}
of $X$ and denoted $\mathrm{al}(X)$.

Set $\mathrm{sub}_{\boldsymbol{\alpha}}^0(X):=0$. Denote by $\mathrm{sub}_{\boldsymbol{\alpha}}(X)=
\mathrm{sub}_{\boldsymbol{\alpha}}^1(X)$ the largest submodule of $X$ annihilated by $\boldsymbol{\alpha}$, 
which exists by Lemma~\ref{lem3}\eqref{lem3.3}. For $i\in\{2,3,\dots\}$,
define $\mathrm{sub}_{\boldsymbol{\alpha}}^i(X)$ as the full preimage of 
$\mathrm{sub}_{\boldsymbol{\alpha}}(X/\mathrm{sub}_{\boldsymbol{\alpha}}^{i-1}(X))$ in $X$.
The filtration
\begin{displaymath}
0=\mathrm{sub}_{\boldsymbol{\alpha}}^0(X)\subset \mathrm{sub}_{\boldsymbol{\alpha}}^1(X)\subset 
\mathrm{sub}_{\boldsymbol{\alpha}}^2(X)\subset \dots
\end{displaymath}
is called the {\em upper} annihilation filtration of $X$.

Set $\mathrm{und}_{\boldsymbol{\alpha}}^0(X):=X$. Denote by $\mathrm{und}_{\boldsymbol{\alpha}}(X)=
\mathrm{und}_{\boldsymbol{\alpha}}^1(X)$ the minimal submodule of $X$ such that 
$X/\mathrm{und}_{\boldsymbol{\alpha}}(X)$ is annihilated by $\boldsymbol{\alpha}$, 
which exists by Lemma~\ref{lem3}\eqref{lem3.3}. For $i\in\{2,3,\dots\}$,
define $\mathrm{und}_{\boldsymbol{\alpha}}^i(X)$ as the minimal submodule of 
$\mathrm{und}_{\boldsymbol{\alpha}}^{i-1}(X)$ such that 
$\mathrm{und}_{\boldsymbol{\alpha}}^{i-1}(X)/\mathrm{und}_{\boldsymbol{\alpha}}^i(X)$ 
is annihilated by $\boldsymbol{\alpha}$, The filtration
\begin{displaymath}
\dots\subset \mathrm{und}_{\boldsymbol{\alpha}}^{2}(X)
\subset \mathrm{und}_{\boldsymbol{\alpha}}^{1}(X)
\subset \mathrm{und}_{\boldsymbol{\alpha}}^{0}(X)=X
\end{displaymath}
is called the {\em lower} annihilation filtration of $X$.

\begin{proposition}\label{prop4}
{\hspace{2mm}}

\begin{enumerate}[$($i$)$]
\item\label{prop4.1} If \eqref{eq2} is an annihilation filtration, then, for any $i=1,2,\dots,k$, we have
\begin{displaymath}
\mathrm{und}_{\boldsymbol{\alpha}}^{k-i}(X)\subset X_i\subset \mathrm{sub}_{\boldsymbol{\alpha}}^i(X).
\end{displaymath}
\item\label{prop4.2} If  \eqref{eq2} is a minimal annihilation filtration, then 
$\mathrm{al}(X)=k$ is also the length of both the upper and the lower annihilation filtration.
\end{enumerate}
\end{proposition}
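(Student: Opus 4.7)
The plan is to prove (i) via two separate inductions on $i$ (upward for the right inclusion, downward for the left), each invoking exactly one of the heredity statements of Lemma~\ref{lem3}. Part (ii) will then follow once we check that the upper and lower annihilation filtrations are themselves annihilation filtrations of $X$.

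For the right inclusion $X_i\subset\mathrm{sub}_{\boldsymbol{\alpha}}^i(X)$, I induct upward from the trivial base case $i=0$. At the step, let $\pi:X\tto X/\mathrm{sub}_{\boldsymbol{\alpha}}^{i-1}(X)$ be the projection. By the inductive hypothesis $X_{i-1}\subset\mathrm{sub}_{\boldsymbol{\alpha}}^{i-1}(X)$, so $\pi(X_i)$ is a quotient of $X_i/X_{i-1}$; the latter is annihilated by $\boldsymbol{\alpha}$, hence so is $\pi(X_i)$ by Lemma~\ref{lem3}\eqref{lem3.2}. The maximality in the definition of $\mathrm{sub}_{\boldsymbol{\alpha}}$ then gives $\pi(X_i)\subset\mathrm{sub}_{\boldsymbol{\alpha}}(X/\mathrm{sub}_{\boldsymbol{\alpha}}^{i-1}(X))$, which pulls back through $\pi$ to $X_i\subset\mathrm{sub}_{\boldsymbol{\alpha}}^i(X)$. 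For the left inclusion $\mathrm{und}_{\boldsymbol{\alpha}}^{k-i}(X)\subset X_i$, I induct downward starting from the trivial base case $i=k$, allowing the step to go all the way down to $i=0$ so as to obtain $\mathrm{und}_{\boldsymbol{\alpha}}^k(X)=0$ as a bonus. At the step from $i+1$ to $i$, write $U:=\mathrm{und}_{\boldsymbol{\alpha}}^{k-i-1}(X)\subset X_{i+1}$. Then $U/(U\cap X_i)$ embeds in $X_{i+1}/X_i$, which is annihilated, so $U/(U\cap X_i)$ is annihilated by Lemma~\ref{lem3}\eqref{lem3.1}. The minimality in the definition of $\mathrm{und}_{\boldsymbol{\alpha}}$ now forces $\mathrm{und}_{\boldsymbol{\alpha}}^{k-i}(X)=\mathrm{und}_{\boldsymbol{\alpha}}(U)\subset U\cap X_i\subset X_i$.

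For (ii), I note first that the upper filtration is itself an annihilation filtration, since each $\mathrm{sub}_{\boldsymbol{\alpha}}^i(X)/\mathrm{sub}_{\boldsymbol{\alpha}}^{i-1}(X)$ is, by construction, a submodule of the $\boldsymbol{\alpha}$-annihilated module $\mathrm{sub}_{\boldsymbol{\alpha}}(X/\mathrm{sub}_{\boldsymbol{\alpha}}^{i-1}(X))$, and similarly for the lower filtration. Applying (i) to a minimal annihilation filtration of length $k=\mathrm{al}(X)$ yields $\mathrm{sub}_{\boldsymbol{\alpha}}^k(X)=X$ and $\mathrm{und}_{\boldsymbol{\alpha}}^k(X)=0$, so each of the two canonical filtrations stabilises by step $k$. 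The opposite inequality is forced by minimality of $\mathrm{al}(X)$: a shorter stabilisation would furnish an annihilation filtration of length $<k$. I expect the only real obstacle to be the bookkeeping of keeping straight which half of Lemma~\ref{lem3} (submodule- versus quotient-heredity) is invoked at each step, and ensuring that the downward induction on the left correctly matches the reversed indexing $k-i$.
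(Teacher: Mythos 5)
Your proof is correct and is exactly the expansion of what the paper dispatches with ``Claim (i) follows directly from the definitions; claim (ii) follows from claim (i)'': the two inductions using the maximality/minimality of $\mathrm{sub}_{\boldsymbol{\alpha}}$ and $\mathrm{und}_{\boldsymbol{\alpha}}$ (via Lemma~\ref{lem3}) are the intended argument, and your deduction of (ii) from (i) plus minimality of $\mathrm{al}(X)$ is the standard one. No issues.
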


\begin{proof}
Claim~\eqref{prop4.1} follows directly from the definitions. Claim~\eqref{prop4.2} follows from claim~\eqref{prop4.1}.
\end{proof}

\begin{remark}\label{rem5}
{\rm 
Existence of annihilation filtration does depend on the choice of $X$ and $\boldsymbol{\alpha}$. For example,
choosing $\boldsymbol{\alpha}$ to contain some identity $2$-morphism, it is easy to construct an example where
no annihilation filtration exists (in these cases the upper and the lower annihilation filtrations stabilize
before reaching $X$ or $0$, respectively).
}
\end{remark}

\begin{example}\label{ex501}
{\rm
Consider the natural action of $\cC_A$ on $A\text{-}\mathrm{mod}$ and let 
$\boldsymbol{\alpha}=1\otimes \mathrm{rad}(A^{op})$ be a subspace of $\mathrm{End}_{\ccC_A}(\mathrm{F})$,
see Section~\ref{s1.8}. Then it is easy to check that the upper annihilation filtration of ${}_AA$ 
with respect to $\boldsymbol{\alpha}$ coincides with the socle series of ${}_AA$ and the lower
annihilation filtration of ${}_AA$  with respect to $\boldsymbol{\alpha}$ coincides with 
the radical series of ${}_AA$.
} 
\end{example}

\subsection{Auxiliary statements}\label{s3.2}

We first introduce some notation that will remain in place for the remainder of this section.
Let $A$ be a self-injective finite dimensional $\Bbbk$-algebra and $\cC_A$ the corresponding weakly fiat
$2$-category as defined in Section~\ref{s1.8}. We recall that $\mathrm{F}$ stands for the 
multiplicity free direct sum of all indecomposable projective functors. Let $\mathbf{M}$ be an isotypic 
faithful $2$-representation of $\cC_A$ and $\overline{\mathbf{M}}$ its abelianization.
Isotypicality and  faithfulness of $\mathbf{M}$, together with \cite[Theorem~15]{MM5}, imply that 
all simple transitive subquotients of $\mathbf{M}$ are equivalent to $\mathbf{C}_{\mathcal{L}}$.

Let $P_1,P_2,\dots,P_r$ be a complete and irredundant list of representatives of isomorphism classes of 
indecomposable projective objects in $\overline{\mathbf{M}}(\clubsuit)$ and $L_1,L_2,\dots,L_r$ their
corresponding simple tops. For $i,j=1,2,\dots,r$, let $a_{ij}$ denote the multiplicity of $P_i$ as
a direct summand of $\mathrm{F}\, P_j$,  and let $b_{ij}$ denote the composition multiplicity of $L_i$ in 
$\mathrm{F}\, L_j$. Then, since $\mathrm{F}^*\cong \mathrm{F}$, we have $b_{ij}=a_{ji}$ by \cite[Lemma~10]{MM5} 
and we can set up the matrix  $[\mathrm{F}]_{\mathbf{M}}:=(a_{ij})_{i,j=1,\dots,r}$. We denote by
$[\mathrm{F}]_{\mathbf{C}_{\mathcal{L}}}$ a similarly defined matrix for the cell
$2$-representation $\mathbf{C}_{\mathcal{L}}$ of $\cC_A$ corresponding to the left cell
$\mathcal{L}=\mathcal{L}_1$ as in Section~\ref{s1.8}.

\begin{lemma}\label{lem6}
We can rearrange the ordering of  $P_1,P_2,\dots,P_r$
such that $[\mathrm{F}]_{\mathbf{M}}$ has the diagonal form
\begin{displaymath}
[\mathrm{F}]_{\mathbf{M}}=
\left( 
\begin{array}{cccc}
[\mathrm{F}]_{\mathbf{C}_{\mathcal{L}}}& 0 & \cdots & 0\\
0 &  [\mathrm{F}]_{\mathbf{C}_{\mathcal{L}}}&  \cdots & 0\\
\vdots  & \vdots & \ddots&\vdots\\
0& 0 & \cdots &  [\mathrm{F}]_{\mathbf{C}_{\mathcal{L}}}\\
\end{array}
\right).
\end{displaymath}
\end{lemma}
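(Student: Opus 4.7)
The plan is as follows. First, by isotypicity and the weak Jordan--H\"older theorem \cite[Theorem~8]{MM5}, I obtain a filtration $\emptyset=\mathbf{X}_0\subsetneq\mathbf{X}_1\subsetneq\dots\subsetneq\mathbf{X}_s=\mathrm{Ind}(\mathbf{M})$ by $\cC_A$-stable subsets of isomorphism classes of indecomposables in $\mathbf{M}(\clubsuit)$, with each $\mathbf{M}_{\mathbf{X}_k}/\mathbf{M}_{\mathbf{X}_{k-1}}$ transitive and having simple transitive quotient equivalent to $\mathbf{C}_{\mathcal{L}}$. Since the defining ideal of a simple transitive quotient contains no identity $2$-morphism on a nonzero object (\cite[Lemma~4]{MM5}), its restriction to each endomorphism algebra lies in the Jacobson radical, so passage to it preserves isomorphism classes of indecomposables. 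Hence each ``layer'' $\mathbf{X}_k\setminus\mathbf{X}_{k-1}$ contributes exactly $n$ projectives and $r=sn$. I then reorder $P_1,\dots,P_r$ so that layer-$k$ projectives appear consecutively, labelled in bijection with those of $\mathbf{C}_{\mathcal{L}}$.

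Closure of each $\mathbf{X}_k$ under the $\cC_A$-action forces every indecomposable summand of $\mathrm{F}\,P_j$ to remain in $\mathbf{X}_k$ whenever $P_j\in\mathbf{X}_k$, so $a_{ij}=0$ whenever $P_i$ belongs to a strictly later layer than $P_j$. This places $[\mathrm{F}]_{\mathbf{M}}$ in block upper-triangular form with $n\times n$ blocks, and the $k$-th diagonal block coincides with $[\mathrm{F}]_{\mathbf{C}_{\mathcal{L}}}$ because summand multiplicities of $\mathrm{F}\,P_j$ are preserved on passage from the transitive subquotient $\mathbf{M}_{\mathbf{X}_k}/\mathbf{M}_{\mathbf{X}_{k-1}}$ to its simple transitive quotient.

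To show that the off-diagonal blocks vanish I plan to combine three ingredients: the rank-one shape of each individual $[\mathrm{F}_{ij}]_{\mathbf{C}_{\mathcal{L}}}$, which has only its $i$-th row nonzero, equal to $(\dim(e_jAe_l))_{l=1,\dots,n}$; the relations $[\mathrm{F}_{ij}]_{\mathbf{M}}^2=\dim(e_jAe_i)[\mathrm{F}_{ij}]_{\mathbf{M}}$; and the cross-composition identities $[\mathrm{F}_{ij}][\mathrm{F}_{jl}]=\dim(e_jAe_j)[\mathrm{F}_{il}]$. A direct calculation on the block upper-triangular shape first forces every row other than the $i$-th of any off-diagonal block of $[\mathrm{F}_{ij}]_{\mathbf{M}}$ to be a scalar multiple of $(\dim(e_jAe_l))_l$; the cross-compositions then identify these scalars across different $(i,j,l)$-triples, reducing the remaining degrees of freedom to a family of parameters depending only on the row index and the leading index $i$. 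The transpose identity $b_{ij}=a_{ji}$, following from $\mathrm{F}^*\cong\mathrm{F}$ via \cite[Lemma~10]{MM5}, furnishes a dual block lower-triangular constraint on composition multiplicities of simples, which combined with the previous constraints should force the residual parameters to vanish.

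The main obstacle will be this last elimination step. The relation $\mathrm{F}\circ\mathrm{F}\cong\mathrm{F}^{\oplus\dim(A)}$, or equivalently the rank count $\mathrm{rank}([\mathrm{F}]_{\mathbf{M}})=s$ that follows from semisimplicity of $[\mathrm{F}]_{\mathbf{M}}$ and the trace computation $\mathrm{tr}([\mathrm{F}]_{\mathbf{M}})=s\cdot\dim(A)$, is by itself insufficient: one can build explicit block upper-triangular matrices with rank-one diagonal blocks and nonzero off-diagonal block satisfying the square identity. The argument genuinely needs the full collection of composition relations for the individual $\mathrm{F}_{ij}$ together with self-duality. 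Should direct parameter elimination prove too tangled, I would fall back on induction on $s$: apply the lemma to $\mathbf{M}/\mathbf{M}_{\mathbf{X}_1}$, which has $s-1$ layers, and use self-duality of $\cC_A$ to show that the short exact sequence $0\to\mathbf{M}_{\mathbf{X}_1}\to\mathbf{M}\to\mathbf{M}/\mathbf{M}_{\mathbf{X}_1}\to 0$ of $2$-representations splits on Grothendieck groups.
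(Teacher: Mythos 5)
Your reduction to block upper-triangular form with diagonal blocks $[\mathrm{F}]_{\mathbf{C}_{\mathcal{L}}}$ (via the weak Jordan--H\"older filtration and the fact that passing to the simple transitive quotient preserves indecomposables and multiplicities) is exactly how the paper begins, and that part is fine. The gap is in the elimination of the off-diagonal blocks: you never carry it out, and worse, you explicitly dismiss the argument that works. The paper needs only $\mathrm{F}\circ\mathrm{F}\cong\mathrm{F}^{\oplus\dim(A)}$. Writing $D=[\mathrm{F}]_{\mathbf{C}_{\mathcal{L}}}$ and $Q$ for the off-diagonal block in the two-block case, the square identity gives $DQ+QD=\dim(A)Q$; left-multiplying by $D$ and using $D^2=\dim(A)D$ yields $DQD=0$. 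Every entry of $D$ is strictly positive (in the defining representation the multiplicity of $P_i$ in $\mathrm{F}\,P_k$ is $\sum_j\dim(e_jAe_k)=\dim(Ae_k)>0$) and every entry of $Q$ is a non-negative multiplicity, so $(DQD)_{il}=\sum_{j,k}D_{ij}Q_{jk}D_{kl}=0$ forces $Q=0$; the general case follows by induction over the blocks.

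Your claim that the square identity is insufficient because one can build block upper-triangular matrices with rank-one diagonal blocks and nonzero off-diagonal block satisfying it is false in the relevant setting: with $D=\mathbf{1}u^{T}$ (all rows equal), any solution of $DQ+QD=\dim(A)Q$ has the form $Q=xu^{T}$ with $u^{T}x=0$, so $x$, and hence $Q$, must have entries of mixed sign --- impossible for a matrix of multiplicities. Having discarded the correct two-line argument, you are left with an uncompleted sketch (row-shape constraints on the individual $[\mathrm{F}_{ij}]$, cross-composition identities, self-duality, plus a fallback induction on layers), none of whose steps is actually established. So the central step of the lemma is missing from your proof.
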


\begin{proof}
Since $\mathbf{M}$ is isotypic, without loss of generality we may assume that $[\mathrm{F}]_{\mathbf{M}}$ has the form
\begin{equation}\label{eq3}
[\mathrm{F}]_{\mathbf{M}}=
\left( 
\begin{array}{cccc}
[\mathrm{F}]_{\mathbf{C}_{\mathcal{L}}}& * & \cdots & *\\
0 &  [\mathrm{F}]_{\mathbf{C}_{\mathcal{L}}}&  \cdots & *\\
\vdots  & \vdots & \ddots&\vdots\\
0& 0 & \cdots &  [\mathrm{F}]_{\mathbf{C}_{\mathcal{L}}}\\
\end{array}
\right).
\end{equation}

Let us first prove the claim for the smallest possible non-trivial case
\begin{displaymath}
[\mathrm{F}]_{\mathbf{M}}=
\left( 
\begin{array}{cc}
[\mathrm{F}]_{\mathbf{C}_{\mathcal{L}}}& Q \\
0 &  [\mathrm{F}]_{\mathbf{C}_{\mathcal{L}}}
\end{array}
\right).
\end{displaymath}
Since $\mathrm{F}\circ \mathrm{F}\cong \mathrm{F}^{\oplus \dim(A)}$, we see that
\begin{displaymath}
[\mathrm{F}]_{\mathbf{C}_{\mathcal{L}}}Q+Q [\mathrm{F}]_{\mathbf{C}_{\mathcal{L}}}=
\dim(A)Q.
\end{displaymath}
Multiplying with $[\mathrm{F}]_{\mathbf{C}_{\mathcal{L}}}$ on the left and using
$[\mathrm{F}]_{\mathbf{C}_{\mathcal{L}}}^2=\dim(A)[\mathrm{F}]_{\mathbf{C}_{\mathcal{L}}}$, we obtain
\begin{displaymath}
\dim(A)[\mathrm{F}]_{\mathbf{C}_{\mathcal{L}}}Q+[\mathrm{F}]_{\mathbf{C}_{\mathcal{L}}}Q [\mathrm{F}]_{\mathbf{C}_{\mathcal{L}}}=
\dim(A)[\mathrm{F}]_{\mathbf{C}_{\mathcal{L}}}Q,
\end{displaymath}
which implies 
\begin{displaymath}
[\mathrm{F}]_{\mathbf{C}_{\mathcal{L}}}Q [\mathrm{F}]_{\mathbf{C}_{\mathcal{L}}}=0.
\end{displaymath}
As $[\mathrm{F}]_{\mathbf{C}_{\mathcal{L}}}$ is a matrix with positive coefficients and 
$Q$ is a matrix with non-negative coefficients, we get $Q=0$.

The general case follows from the above baby case by double induction on the columns from left to right,
and on the entries within each column from bottom to top.
\end{proof}

Set $\hat{P}:=P_1\oplus P_2\oplus \dots\oplus P_r$ and, for $X\in \overline{\mathbf{M}}(\clubsuit)$, define
\begin{displaymath}
\dim(X):=\dim\mathrm{Hom}_{\overline{\mathbf{M}}(\clubsuit)}(\hat{P},X).
\end{displaymath}
If $L$ is a simple $A$-module, then $\mathrm{F}\, L\cong {}_AA$ and hence
$\dim(\mathrm{F}\, L)=\dim(A)\dim(L)$. By exactness of $\mathrm{F}$, this extends to all
$A$-modules and hence to any object in $\mathbf{C}_{\mathcal{L}}(\clubsuit)$.
Using Lemma~\ref{lem6}, we thus obtain   
\begin{equation}\label{eq5}
\dim(\mathrm{F}\, X)=\dim(A)\dim(X)
\end{equation}
for any $X\in \overline{\mathbf{M}}(\clubsuit)$.

As explained in Section~\ref{s1.8}, the matrix  
$[\mathrm{F}]_{\mathbf{C}_{\mathcal{L}}}$ has size $n\times n$.
We now choose a special ordering of $P_1,P_2,\dots,P_r$. First of all, we need  
$[\mathrm{F}]_{\mathbf{M}}$ to have the form given by Lemma~\ref{lem6}. 
We note that  $r=nk$ for some $k\in\{1,2,\dots\}$. 
For every $i=1,2,\dots,k$, we have a transitive $2$-representation of $\cC_A$ on the additive closure of 
$P_{n(i-1)+1},P_{n(i-1)+2},\dots,P_{ni}$. The simple transitive quotients are equivalent for all $i$.
We choose arbitrarily the ordering of $P_1,P_2,\dots,P_n$ and assume that each such equivalence induces 
simply a shift by $n$ on indices of the indecomposable projectives in these transitive $2$-representations.

Set $P:=P_1\oplus P_2\oplus \dots\oplus P_n$  and define $L:=P/\mathrm{rad}(P)$. 
Define $\boldsymbol{\alpha}$ to be
$1\otimes \mathrm{rad}(A^{op})$ considered as a subspace of the space $\mathrm{End}_{\ccC_A}(\mathrm{F})$
of $2$-endomorphisms of $\mathrm{F}$. Note that $\boldsymbol{\alpha}$ is nilpotent, and hence any object
in $\overline{\mathbf{M}}(\clubsuit)$ has an annihilation filtration with respect to $\boldsymbol{\alpha}$.
Furthermore, since the nilpotency degree of $\boldsymbol{\alpha}$ is exactly the Loewy length
$\mathbf{c}:=\mathrm{ll}({}_AA)$, we have $\mathrm{al}(X)\leq \mathbf{c}$ for any 
$X\in \overline{\mathbf{M}}(\clubsuit)$. We set
\begin{displaymath}
N_i:=P_i/\mathrm{und}_{\boldsymbol{\alpha}}^{1}(P_i) \text{ for } i=1,2,\dots,r, \quad\quad N:=\bigoplus_{i=1}^n N_i,
\end{displaymath}
and note that $N\tto L$ by nilpotency of $\boldsymbol{\alpha}$.

\begin{lemma}\label{lem7}
Let $Q$ be a basic projective generator in $\mathbf{C}_{\mathcal{L}}(\clubsuit)$.
Then $\mathrm{al}(Q)= \mathbf{c}$.
\end{lemma}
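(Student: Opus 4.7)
The plan is to reduce the assertion to the concrete computation carried out in Example~\ref{ex501}, by transporting the question from $\mathbf{C}_{\mathcal{L}}$ to the natural action of $\cC_A$ on $A\text{-}\mathrm{mod}$.

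First, I would use the description of $\mathbf{C}_{\mathcal{L}}$ given in \cite[Section~7.3]{MM1} (and recalled in Section~\ref{s1.8}): for the left cell $\mathcal{L}=\mathcal{L}_1$ of $\cC_A$, the abelianization $\overline{\mathbf{C}}_{\mathcal{L}}$ is equivalent, as a $2$-representation of $\cC_A$, to the natural $2$-representation on $A\text{-}\mathrm{mod}$. Under this equivalence a basic projective generator $Q$ of $\mathbf{C}_{\mathcal{L}}(\clubsuit)$ is identified with the regular module ${}_AA$, and, crucially, the subspace $\boldsymbol{\alpha}=1\otimes \mathrm{rad}(A^{op})$ of $\mathrm{End}_{\ccC_A}(\mathrm{F})$ acts on $Q$ through exactly the same prescription as it does on ${}_AA$. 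So computing $\mathrm{al}(Q)$ in $\overline{\mathbf{C}}_{\mathcal{L}}$ is the same as computing $\mathrm{al}({}_AA)$ in the natural representation.

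Next, I would invoke Example~\ref{ex501}, which states that the upper annihilation filtration of ${}_AA$ with respect to $\boldsymbol{\alpha}$ agrees with the socle filtration of ${}_AA$. The length of the socle filtration of ${}_AA$ is, by definition, the Loewy length $\mathrm{ll}({}_AA)=\mathbf{c}$. Because $\boldsymbol{\alpha}$ is nilpotent of degree $\mathbf{c}$ (as noted just before the lemma), the socle filtration is a genuine annihilation filtration reaching the whole module, so a minimal annihilation filtration of $Q$ exists.

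Finally, by Proposition~\ref{prop4}\eqref{prop4.2}, whenever an annihilation filtration exists, the annihilation length $\mathrm{al}(Q)$ equals the length of the upper annihilation filtration; combining this with the previous step gives $\mathrm{al}(Q)=\mathbf{c}$, as required. The only place requiring genuine care, and what I view as the main bookkeeping obstacle, is making the identification of $\mathbf{C}_{\mathcal{L}}$ with the natural action of $\cC_A$ on $A\text{-}\mathrm{mod}$ explicit enough that the subspace $\boldsymbol{\alpha}$ on both sides is manifestly the same; once that compatibility is confirmed, Example~\ref{ex501} does all the remaining work.
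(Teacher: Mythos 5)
Your proof is correct and follows essentially the same route as the paper's: both identify $\mathbf{C}_{\mathcal{L}}$ with the defining $2$-representation on $A\text{-}\mathrm{mod}$ (the paper does this via \cite[Proposition~9]{MM5}) and then read off $\mathrm{al}({}_AA)=\mathbf{c}$ from the action of $\boldsymbol{\alpha}=1\otimes\mathrm{rad}(A^{op})$ on ${}_AA$. The only cosmetic difference is that the paper gets the lower bound $\mathrm{al}({}_AA)\geq\mathbf{c}$ from faithfulness of $\mathbf{C}_{\mathcal{L}}$ (so that $\mathbf{C}_{\mathcal{L}}(\boldsymbol{\alpha})$ has nilpotency degree exactly $\mathbf{c}$), whereas you obtain the exact value from Example~\ref{ex501} combined with Proposition~\ref{prop4}\eqref{prop4.2}; both come down to the same computation.
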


\begin{proof}
By \cite[Proposition~9]{MM5}, we may assume that $\mathbf{C}_{\mathcal{L}}(\clubsuit)$ is the defining 
$2$-representation of $\cC_A$ and that  $Q={}_AA$. As $\cC_A$ is $\mathcal{J}$-simple, 
$\mathbf{C}_{\mathcal{L}}$ is faithful and hence $\mathbf{C}_{\mathcal{L}}(\boldsymbol{\alpha})$
has nilpotency degree exactly $\mathbf{c}$. This means that $\mathrm{al}({}_AA)\geq  \mathbf{c}$ implying the claim. 
\end{proof}

\begin{lemma}\label{lem8}
For each $i=1,2,\dots,n$,
we have $\mathrm{al}(\mathrm{F}\, L_i)=\mathrm{al}(\mathrm{F}\, N_i)=\mathrm{al}(\mathrm{F}\, P_i)= \mathbf{c}$.
\end{lemma}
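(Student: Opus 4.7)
The plan is to sandwich all three quantities via the chain
\[
\mathbf{c}\;\le\;\mathrm{al}(\mathrm{F}\, L_i)\;\le\;\mathrm{al}(\mathrm{F}\, N_i)\;\le\;\mathrm{al}(\mathrm{F}\, P_i)\;\le\;\mathbf{c}.
\]
The two middle inequalities will come from the surjections $P_i\twoheadrightarrow N_i\twoheadrightarrow L_i$. Because $\mathrm{F}^*\cong\mathrm{F}$ is biadjoint to $\mathrm{F}$, the action of $\mathrm{F}$ on $\overline{\mathbf{M}}(\clubsuit)$ is exact, lifting these to surjections $\mathrm{F}\, P_i\twoheadrightarrow \mathrm{F}\, N_i\twoheadrightarrow \mathrm{F}\, L_i$. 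Pushing any annihilation filtration forward along such a surjection produces an annihilation filtration of the quotient by Lemma~\ref{lem3}\eqref{lem3.2} (after removing duplicate terms), so $\mathrm{al}$ is monotone under surjections.

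For the lower bound $\mathrm{al}(\mathrm{F}\, L_i)\ge\mathbf{c}$, I would pass to the cell $2$-representation as a simple transitive quotient. The transitive subrepresentation $\mathbf{G}_{\mathbf{M}}(\{L_i\})$ has simple transitive quotient equivalent to $\mathbf{C}_{\mathcal{L}}$ by isotypicality of $\mathbf{M}$. Under the induced quotient $2$-functor, $L_i$ maps to a simple of $\mathbf{C}_{\mathcal{L}}\cong A\text{-}\mathrm{mod}$; since $\mathrm{F}_{jk}L\cong Ae_j\otimes_{\Bbbk}e_kL$ evaluates to $Ae_j$ on the $k$-th simple and $0$ otherwise, $\mathrm{F}$ sends every simple to ${}_AA$, so the image of $\mathrm{F}\, L_i$ is identified with ${}_AA$. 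Lemma~\ref{lem7} gives $\mathrm{al}_{\mathbf{C}_{\mathcal{L}}}({}_AA)=\mathbf{c}$. Because the quotient $2$-functor is surjective on $2$-morphism spaces, any annihilation filtration in $\mathbf{M}$ descends to one in the quotient, yielding $\mathrm{al}_{\mathbf{M}}(\mathrm{F}\, L_i)\geq\mathbf{c}$.

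The delicate upper bound $\mathrm{al}(\mathrm{F}\, P_i)\leq\mathbf{c}$ is the main obstacle. By biadjointness, $\mathrm{F}$ preserves projectives, so $\mathrm{F}\, P_i$ is projective in $\overline{\mathbf{M}}(\clubsuit)$, and by Lemma~\ref{lem6} it decomposes as a direct sum of indecomposable projectives $P_j$ in a single block whose combinatorics mirror the cell $2$-representation via \eqref{eq5}. The plan is to construct an annihilation filtration of length $\mathbf{c}$ by lifting the socle filtration of the corresponding cell object, which has length $\mathbf{c}$ by Example~\ref{ex501} and Lemma~\ref{lem7}, exploiting the nilpotency $\boldsymbol{\alpha}^{\mathbf{c}}=0$ in $\cC_A$ together with $\mathrm{F}\circ\mathrm{F}\cong\mathrm{F}^{\oplus \dim(A)}$. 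The difficulty is that, \emph{a priori}, extensions in $\overline{\mathbf{M}}(\clubsuit)$ not detected by the matrix $[\mathrm{F}]_{\mathbf{M}}$ could stretch such a filtration past $\mathbf{c}$; verifying that this does not happen, equivalently that every simple constituent of $\mathrm{F}\, P_i$ is annihilated by $\boldsymbol{\alpha}$, should follow from a dimensional argument using \eqref{eq5} and the block structure of Lemma~\ref{lem6}.
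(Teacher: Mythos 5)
Your reduction to $\mathrm{F}\,L_i$ via the surjections $\mathrm{F}\,P_i\tto\mathrm{F}\,N_i\tto\mathrm{F}\,L_i$ and Lemma~\ref{lem3}\eqref{lem3.2}, and your strategy for the lower bound $\mathrm{al}(\mathrm{F}\,L_i)\geq\mathbf{c}$ (locate a simple transitive subquotient equivalent to $\mathbf{C}_{\mathcal{L}}$ and apply Lemma~\ref{lem7}), match the paper. But your last paragraph contains a genuine gap: the upper bound $\mathrm{al}(\mathrm{F}\,P_i)\leq\mathbf{c}$ is never actually proved --- you only sketch a plan and yourself flag its key step as unverified. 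Moreover, the plan attacks a non-problem. As recorded in the paragraph preceding the lemma, $\boldsymbol{\alpha}=1\otimes\mathrm{rad}(A^{\mathrm{op}})$ is nilpotent of degree $\mathbf{c}$ already as a subspace of $\mathrm{End}_{\ccC_A}(\mathrm{F})$, whence $\mathrm{al}(X)\leq\mathbf{c}$ for \emph{every} $X\in\overline{\mathbf{M}}(\clubsuit)$, with no reference to the extension structure of $\overline{\mathbf{M}}(\clubsuit)$, to Lemma~\ref{lem6}, or to \eqref{eq5}. Your worry about extensions not detected by $[\mathrm{F}]_{\mathbf{M}}$ stretching the filtration is a red herring; the whole content of the lemma is the lower bound.

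On that lower bound, two of your steps need repair, though both are repairable with the paper's tools. First, $\mathbf{G}_{\overline{\mathbf{M}}}(\{L_i\})$ need not be transitive (a direct summand of $\mathrm{F}\,L_i$ need not generate $L_i$ back), so it does not have ``the'' simple transitive quotient; and since it is a subrepresentation of $\overline{\mathbf{M}}$ rather than of $\mathbf{M}$, its simple transitive subquotients are not literally among those governed by isotypicality of $\mathbf{M}$. The paper instead works with the $2$-representation on $\mathrm{add}(\mathrm{F}\,L_i)$, observes that $\mathrm{F}$ does not annihilate it, and invokes the classification of simple transitive $2$-representations of $\cC_A$ to produce a subquotient equivalent to $\mathbf{C}_{\mathcal{L}}$; in particular there is no need to identify the image of $L_i$ with a simple $A$-module. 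Second, ``surjectivity on $2$-morphism spaces'' does not by itself transport annihilation filtrations through the ideal quotient and re-abelianization. The clean mechanism, implicit in the proof of Lemma~\ref{lem7}, is that an annihilation filtration of length $k$ of an object $Y$ forces $(\boldsymbol{\alpha}^{k})_Y=0$, while faithfulness of $\mathbf{C}_{\mathcal{L}}$ (via $\mathcal{J}$-simplicity) gives $(\boldsymbol{\alpha}^{\mathbf{c}-1})\neq 0$ on an object of $\mathrm{add}(\mathrm{F}\,L_i)$, so $\mathrm{al}(\mathrm{F}\,L_i)\geq\mathbf{c}$.
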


\begin{proof}
As $P_i\tto N_i\tto L_i$, it is enough to prove that $\mathrm{al}(\mathrm{F}\, L_i)=\mathbf{c}$. Note that 
$\mathrm{F}\, L_i\neq 0$ since $\mathrm{F}$ is exact and $\mathbf{M}$ is faithful. This 
induces a finitary $2$-representation, $\mathbf{N}$, of $\cC_A$ on $\mathrm{add}(\mathrm{F}\, L_i)$. 
Since $\mathrm{F}$ does not annihilate $\mathrm{F}\, L_i$, at least one of the simple transitive subquotients
of $\mathbf{N}$ is equivalent to $\mathbf{C}_{\mathcal{L}}$. Using Lemma~\ref{lem7}, we thus
deduce that $\mathrm{F}\, L_i$ has a subquotient $X$ such that $\mathrm{al}(X)= \mathbf{c}$. Therefore, Lemma~\ref{lem3} yields that $\mathrm{al}(\mathrm{F}\, L_i)\geq \mathbf{c}$  and we are done. 
\end{proof}

Let $\mathbf{K}$ denote the restriction of $\mathbf{M}$ to $\mathrm{add}(P)$. Then $\mathbf{K}$ is transitive and
its simple transitive quotient is equivalent to $\mathbf{C}_{\mathcal{L}}$. Denote by $\mathbf{I}$ the
corresponding maximal $\cC_A$-stable ideal of $\mathbf{K}$. Denote by $I$ the subspace of
$\mathrm{End}(P)$ belonging to $\mathbf{I}$. Then \cite[Proposition~9]{MM5} implies that
$\mathrm{End}_{\overline{\mathbf{M}}(\clubsuit)}(P)/I\cong A$. 
For $i=1,2,\dots,n$, we set $\tilde{P}_i:=P_i/(P_i\cap IP)$.

\begin{lemma}\label{lem9} 
For $i,j,s=1,2,\dots,n$, we have 
\begin{displaymath}
\mathrm{F}_{ij}\, L_s\cong
\begin{cases}
\tilde{P}_i & \text{if } s=j, \\
0 & \text{otherwise}. 
\end{cases}
\end{displaymath}

\end{lemma}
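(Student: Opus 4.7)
My approach is to exploit the $\cC_A$-equivariant quotient $\mathbf{K}\tto\mathbf{K}/\mathbf{I}\simeq \mathbf{C}_{\mathcal{L}}$, which by $2$-naturality of abelianization yields a right-exact functor $Q:\overline{\mathbf{K}}(\clubsuit)\to A\text{-}\mathrm{mod}$ intertwining the $\cC_A$-action, with $Q(P_i)=Ae_i$ and $Q(L_t)=L_t^{\mathrm{cell}}$. Identifying $\overline{\mathbf{K}}(\clubsuit)\simeq B\text{-}\mathrm{mod}$ with $B=\mathrm{End}_{\mathbf{K}}(P)^{op}$ and $A=B/I$, one has $Q(M)=M/IM$, so $\dim Q(M)\leq \dim M$ with equality if and only if $M$ is an $A$-module; moreover, since $P\cong B$ as a left $B$-module, a direct calculation gives $P_i\cap IP = Be_i\cap I = Ie_i$, so $\tilde P_i = Be_i/Ie_i = Ae_i$ is already an $A$-module.

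Block-diagonality of $[\mathrm{F}]_{\mathbf{M}}$ from Lemma~\ref{lem6}, combined with non-negativity of entries, forces each $[\mathrm{F}_{k\ell}]_{\mathbf{M}}$ to be block-diagonal as well, so every composition factor of $\mathrm{F}_{k\ell} L_s$ lies in $\{L_1,\ldots,L_n\}$ and $\mathrm{F}_{k\ell} L_s\in \overline{\mathbf{K}}(\clubsuit)$. The cell computation $\mathrm{F}_{k\ell} L_s^{\mathrm{cell}} = \delta_{s\ell}Ae_k$ together with the intertwining gives $Q(\mathrm{F}_{k\ell} L_s) = \delta_{s\ell} Ae_k$, hence $\dim(\mathrm{F}_{k\ell} L_s) \geq \delta_{s\ell}\dim(Ae_k)$. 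Summing over $(k,\ell)$ and using \eqref{eq5}:
\begin{displaymath}
\dim(A) = \dim(\mathrm{F} L_s) = \sum_{k,\ell} \dim(\mathrm{F}_{k\ell} L_s) \geq \sum_{k} \dim(Ae_k) = \dim(A),
\end{displaymath}
so every inequality is an equality: $\dim(\mathrm{F}_{k\ell} L_s) = \delta_{s\ell}\dim(Ae_k)$ for all $k,\ell$. Specializing to $(k,\ell)=(i,j)$: if $s\neq j$ then $\mathrm{F}_{ij} L_s = 0$; if $s=j$ then $\dim(\mathrm{F}_{ij} L_j) = \dim Q(\mathrm{F}_{ij} L_j)$ forces $I\cdot \mathrm{F}_{ij} L_j = 0$, so $\mathrm{F}_{ij} L_j$ is an $A$-module equal to $Q(\mathrm{F}_{ij} L_j) = Ae_i = \tilde P_i$.

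\textbf{Main obstacle.} The structural input requiring the most care is verifying that $Q$ genuinely intertwines each $\mathrm{F}_{k\ell}$, and that the equality case $\dim Q(M) = \dim M$ exactly detects when $M$ is an $A$-module. Both rest on $\cC_A$-stability of $\mathbf{I}$ together with $2$-naturality of abelianization; once these are in place, the proof reduces to the dimension bookkeeping above.
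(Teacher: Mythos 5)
Your proposal is correct and follows essentially the same route as the paper, namely comparing the action of $\mathrm{F}_{ij}$ with its known action on $\mathbf{K}/\mathbf{I}\simeq \mathbf{C}_{\mathcal{L}}\simeq A\text{-}\mathrm{mod}$ via the quotient functor $Q$. The only real addition is your dimension count against \eqref{eq5}, which makes explicit the step the paper leaves implicit (that $I$ annihilates $\mathrm{F}_{ij}\,L_s$, so that the computation in the quotient already determines the object in $\overline{\mathbf{M}}(\clubsuit)$); this is a legitimate and correctly executed way to close that step.
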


\begin{proof}
This follows by comparing the action of $\mathrm{F}_{ij}$ on $\mathbf{K}/\mathbf{I}$ and
on $A\text{-}\mathrm{mod}$, the latter being equivalent to $\mathbf{C}_{\mathcal{L}}$
by \cite[Proposition~9]{MM5}.
\end{proof}

\begin{lemma}\label{lem10}
{\hspace{2mm}}

\begin{enumerate}[$($i$)$]
\item\label{lem10.1} For $i,j,k=1,2,\dots,n$, we have $\mathrm{F}_{ij}\, N_k=0$ unless $j=k$.
\item\label{lem10.2} For $i,j=1,2,\dots,n$, we have $P_{i}\tto \mathrm{F}_{ij}\, N_j$.
\end{enumerate}
\end{lemma}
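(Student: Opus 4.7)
The plan is to determine the simple top of $\mathrm{F}_{ij}\,N_k$ via an adjunction computation and then use the $\boldsymbol{\alpha}$-annihilation of $N_k$ to force the vanishing conclusion, so that both parts drop out of a single calculation.

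First I would sharpen the block-diagonal form of Lemma~\ref{lem6} to each individual $\mathrm{F}_{ij}$: since $[\mathrm{F}]_{\mathbf{M}}=\sum_{i,j}[\mathrm{F}_{ij}]_{\mathbf{M}}$ is a sum of matrices with nonnegative integer entries, block-diagonality of the total forces each $[\mathrm{F}_{ij}]_{\mathbf{M}}$ to be block-diagonal. Combining the adjunction $(\mathrm{F}_{ij},\mathrm{F}_{\sigma^{-1}(j),i})$ from Section~\ref{s1.8} with Lemma~\ref{lem9} yields, for $k,s$ in the first block,
\[
\dim\mathrm{Hom}(\mathrm{F}_{ij}\,P_k,L_s)=\delta_{si}\dim\mathrm{Hom}(P_k,\tilde{P}_{\sigma^{-1}(j)}),
\]
so every indecomposable projective summand of $\mathrm{F}_{ij}\,P_k$ is a copy of $P_i$. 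Since $\mathrm{F}_{ij}$ is exact (it has both adjoints because $\cC_A$ is weakly fiat) and $N_k$ is a quotient of $P_k$, every simple quotient of $\mathrm{F}_{ij}\,N_k$ is therefore isomorphic to $L_i$, and in particular
\[
\dim\mathrm{Hom}(\mathrm{F}_{ij}\,N_k,L_i)=\dim\mathrm{Hom}(N_k,\mathrm{F}_{\sigma^{-1}(j),i}\,L_i)=\dim\mathrm{Hom}(N_k,\tilde{P}_{\sigma^{-1}(j)}).
\]

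For any nonzero $\phi:N_k\to\tilde{P}_{\sigma^{-1}(j)}$ the image $\phi(N_k)$ is on the one hand a quotient of $N_k$, hence $\boldsymbol{\alpha}$-annihilated by Lemma~\ref{lem3}\eqref{lem3.2}, and on the other hand a submodule of $\tilde{P}_{\sigma^{-1}(j)}$, which under the equivalence $\mathbf{C}_{\mathcal{L}}\simeq A\text{-}\mathrm{mod}$ corresponds to $Ae_{\sigma^{-1}(j)}$. Example~\ref{ex501} combined with Lemma~\ref{lem3}\eqref{lem3.1} applied to the embedding $Ae_{\sigma^{-1}(j)}\hookrightarrow{}_AA$ shows that any such annihilated submodule sits inside $\mathrm{soc}(Ae_{\sigma^{-1}(j)})$, which by the Nakayama identification of Section~\ref{s1.8} equals $L_j$. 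Since $\phi(N_k)$ inherits the simple top $L_k$ of $N_k$, this forces $k=j$; conversely, for $k=j$ the composition $N_j\tto L_j\hookrightarrow\tilde{P}_{\sigma^{-1}(j)}$ is nonzero and unique up to scalar, so the Hom space has dimension $\delta_{kj}$.

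Combining, the top of $\mathrm{F}_{ij}\,N_k$ is $L_i^{\oplus\delta_{kj}}$. For $k\neq j$ this is zero, so the module itself vanishes, which gives part~(i); for $k=j$ the module is cyclic with simple top $L_i$, so $P_i\tto\mathrm{F}_{ij}\,N_j$, which gives part~(ii). The main obstacle I anticipate is verifying that the $\boldsymbol{\alpha}$-annihilation condition in $\overline{\mathbf{M}}$, applied to objects of the $A\text{-}\mathrm{mod}$ subcategory realized via $\mathbf{K}/\mathbf{I}\simeq\mathbf{C}_{\mathcal{L}}$, agrees with the internal $\boldsymbol{\alpha}$-annihilation in $\mathbf{C}_{\mathcal{L}}$; this compatibility is what licenses importing Example~\ref{ex501} (formulated in the defining representation) into the ambient $\overline{\mathbf{M}}$.
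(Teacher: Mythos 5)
Your proposal is correct and follows essentially the same route as the paper: adjunction $(\mathrm{F}_{ij},\mathrm{F}_{\sigma^{-1}(j)i})$ together with Lemma~\ref{lem9} reduces everything to $\mathrm{Hom}(N_k,\tilde{P}_{\sigma^{-1}(j)})$, and the $\boldsymbol{\alpha}$-annihilation of $N_k$ forces the image into $\mathrm{sub}_{\boldsymbol{\alpha}}(\tilde{P}_{\sigma^{-1}(j)})\cong L_j$, giving vanishing unless $j=k$ and a one-dimensional space (hence simple top $L_i$) when $j=k$. Your preliminary reduction via $\mathrm{F}_{ij}\,P_k$ is a slight repackaging of the paper's direct treatment of all $L_s$, and the compatibility issue you flag at the end is handled in the paper by the same comparison with the defining representation via \cite[Proposition~9]{MM5} that underlies Lemma~\ref{lem9}.
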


\begin{proof}
By adjunction,  
\begin{displaymath}
\mathrm{Hom}_{\overline{\mathbf{M}}(\clubsuit)}(\mathrm{F}_{ij}\, N_k, L_s) \cong
\mathrm{Hom}_{\overline{\mathbf{M}}(\clubsuit)}(N_k,\mathrm{F}_{\sigma^{-1}(j)i}\,  L_s),  
\end{displaymath}
and the object $\mathrm{F}_{\sigma^{-1}(j)i}\,  L_s$ cannot have $L_k$ as a composition subquotient unless 
$s\in\{1,2,\dots,n\}$,
moreover, for such $s$ we have $\mathrm{F}_{\sigma^{-1}(j)i}\,  L_s=0$ unless $s=i$. If $s=i$, then,
by Lemma~\ref{lem9}, we have
\begin{displaymath}
\mathrm{Hom}_{\overline{\mathbf{M}}(\clubsuit)}(N_k,\mathrm{F}_{\sigma^{-1}(j)i}\,  L_i) \cong 
\mathrm{Hom}_{\overline{\mathbf{M}}(\clubsuit)}(N_k,\tilde{P}_{\sigma^{-1}(j)}).
\end{displaymath}
As $\boldsymbol{\alpha}$ annihilates $N_k$, the image of $N_k$ under such a homomorphism
is contained in $\mathrm{sub}_{\boldsymbol{\alpha}}(\tilde{P}_{\sigma^{-1}(j)})$, which 
is isomorphic to $L_j$, see Example~\ref{ex501}. Hence
$\mathrm{Hom}_{\overline{\mathbf{M}}(\clubsuit)}(N_k,\tilde{P}_{\sigma^{-1}(j)})$ is zero unless $j=k$,
and in the latter case this space is one-dimensional. Both claims of the lemma follow.
\end{proof}

\begin{lemma}\label{lem11}
The object $N$ is a progenerator for the full subcategory
$\mathrm{sub}_{\boldsymbol{\alpha}}(\overline{\mathbf{M}}(\clubsuit))$ of $\overline{\mathbf{M}}(\clubsuit)$
consisting of all objects isomorphic to $\mathrm{sub}_{\boldsymbol{\alpha}}(X)$ for
some $X\in \overline{\mathbf{M}}(\clubsuit)$.
\end{lemma}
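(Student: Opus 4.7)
Write $\mathcal{S}$ for $\mathrm{sub}_{\boldsymbol{\alpha}}(\overline{\mathbf{M}}(\clubsuit))$; this coincides with the full subcategory of objects of $\overline{\mathbf{M}}(\clubsuit)$ on which $\boldsymbol{\alpha}$ acts as zero, since any such $Y$ equals $\mathrm{sub}_{\boldsymbol{\alpha}}(Y)$ and every $\mathrm{sub}_{\boldsymbol{\alpha}}(X)$ is annihilated by construction. To show $N$ is a progenerator of $\mathcal{S}$ I plan to verify three things in turn: membership of $N$ in $\mathcal{S}$, projectivity of $N$ in $\mathcal{S}$, and generation of $\mathcal{S}$ by $N$.

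Membership is immediate, since each $N_i = P_i/\mathrm{und}_{\boldsymbol{\alpha}}^1(P_i)$ is by its defining property the largest $\boldsymbol{\alpha}$-annihilated quotient of $P_i$. Projectivity in $\mathcal{S}$ rests on the universal property of $\mathrm{und}_{\boldsymbol{\alpha}}^1(P_i)$ as the smallest subobject of $P_i$ whose quotient lies in $\mathcal{S}$: for any $Y \in \mathcal{S}$, every morphism $P_i \to Y$ factors uniquely through $P_i \twoheadrightarrow N_i$, giving a natural isomorphism $\mathrm{Hom}_{\overline{\mathbf{M}}(\clubsuit)}(N_i, Y) \cong \mathrm{Hom}_{\overline{\mathbf{M}}(\clubsuit)}(P_i, Y)$. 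Since $P_i$ is projective in $\overline{\mathbf{M}}(\clubsuit)$, the right-hand side is exact in $Y$, and this exactness transfers to the left-hand side on short exact sequences in $\mathcal{S}$, giving projectivity of $N$ by additivity.

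For generation, take $Y \in \mathcal{S}$ and choose a projective cover $\pi\colon \bigoplus_i P_i^{\oplus m_i} \twoheadrightarrow Y$ in $\overline{\mathbf{M}}(\clubsuit)$. Because $Y$ is annihilated by $\boldsymbol{\alpha}$, the same universal property forces $\pi$ to factor through the canonical surjection $\bigoplus_i P_i^{\oplus m_i} \twoheadrightarrow \bigoplus_i N_i^{\oplus m_i}$, exhibiting $Y$ as a quotient of a direct sum of the $N_i$. The final step is to reduce this to a quotient of a power of $N = \bigoplus_{i=1}^n N_i$, which I would carry out by invoking Lemma~\ref{lem6}'s block-diagonal decomposition of $[\mathrm{F}]_{\mathbf{M}}$ together with the structural information of Lemmas~\ref{lem9} and~\ref{lem10}.

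\textbf{Main obstacle.} The substantive difficulty is the final reduction in the generation step: controlling the contribution of summands $N_i$ with $i > n$ arising from projective covers in the ambient category. This requires tracking the interaction of $\boldsymbol{\alpha}$-annihilation with the isotypic block structure. The key inputs should be the socle formula $\mathrm{sub}_{\boldsymbol{\alpha}}(\tilde P_i) \cong L_i$ from Example~\ref{ex501} and the action identity $\mathrm{F}_{ij}L_s \cong \tilde P_i$ (when $s=j$) of Lemma~\ref{lem9}, combined with the adjunction arguments already used in Lemma~\ref{lem10}; however, these are first-block statements, and translating them into the cross-block information needed to pin down the indecomposable summands of $N$ is where the bulk of the work lies.
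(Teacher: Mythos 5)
Your reduction of the statement to the category of $\boldsymbol{\alpha}$-annihilated objects, and your proof of relative projectivity, coincide with the paper's argument: the paper observes that any homomorphism from $P$ to an annihilated object factors over $N$, so that the exact sequence obtained by applying $\mathrm{Hom}_{\overline{\mathbf{M}}(\clubsuit)}(P,{}_-)$ to a short exact sequence in $\mathrm{sub}_{\boldsymbol{\alpha}}(\overline{\mathbf{M}}(\clubsuit))$ descends to an exact sequence for $\mathrm{Hom}_{\overline{\mathbf{M}}(\clubsuit)}(N,{}_-)$. That half of your proposal is complete. For generation the paper spends one line, namely that $N\twoheadrightarrow L$ makes $N$ a generator; this amounts to taking projective covers in $\mathrm{add}(P)$ and factoring them through $N$, i.e.\ your projective-cover step restricted to the first block.

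The ``main obstacle'' you isolate is genuine, but the reduction you propose cannot succeed, and you should not attempt it. If $r>n$, the object $N_{n+1}=P_{n+1}/\mathrm{und}^{1}_{\boldsymbol{\alpha}}(P_{n+1})$ is nonzero (nilpotency of $\boldsymbol{\alpha}$ guarantees a nonzero annihilated quotient of $P_{n+1}$), lies in $\mathrm{sub}_{\boldsymbol{\alpha}}(\overline{\mathbf{M}}(\clubsuit))$, and has simple top $L_{n+1}$; every quotient of a direct sum of copies of $N=\bigoplus_{i=1}^{n}N_i$ has top in $\mathrm{add}(L_1\oplus\dots\oplus L_n)$, so $N_{n+1}$ is not such a quotient, and no combination of Lemma~\ref{lem6}, Lemma~\ref{lem9} and Lemma~\ref{lem10} will change this. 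The statement must therefore be read either with $N$ replaced by $\bigoplus_{i=1}^{r}N_i$ (in which case your projective-cover argument already finishes the proof), or with the generation claim restricted to annihilated objects whose top lies in $\mathrm{add}(L)$, equivalently whose projective cover lies in $\mathrm{add}(P)$. The latter, weaker statement is all that is used later (for $\mathrm{sub}_{\boldsymbol{\alpha}}(P)$, for $\mathrm{sub}_{\boldsymbol{\alpha}}(\mathrm{F}\,N_s)$, for the subquotients in Proposition~\ref{prop12}, and for the splittings in Proposition~\ref{prop14} and Lemma~\ref{lem16}), and for it your argument closes immediately: the projective cover is a direct sum of $P_i$ with $i\le n$ and factors through a direct sum of copies of $N$.
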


\begin{proof}
Let $X\hookrightarrow Y\tto Z$ be a short exact sequence in 
$\mathrm{sub}_{\boldsymbol{\alpha}}(\overline{\mathbf{M}}(\clubsuit))$. By projectivity of 
$P$, we have a short exact sequence
\begin{equation}\label{eq7}
0\to \mathrm{Hom}_{\overline{\mathbf{M}}(\clubsuit)} (P,X)\to
\mathrm{Hom}_{\overline{\mathbf{M}}(\clubsuit)} (P,Y)\to 
\mathrm{Hom}_{\overline{\mathbf{M}}(\clubsuit)} (P,Z)\to 0.
\end{equation}
As $\boldsymbol{\alpha}$ annihilates $X$, $Y$ and $Z$, any homomorphism from $P$ to any of these objects factors 
over $N$. Therefore \eqref{eq7} induces an exact sequence
\begin{displaymath}
0\to \mathrm{Hom}_{\overline{\mathbf{M}}(\clubsuit)} (N,X)\to
\mathrm{Hom}_{\overline{\mathbf{M}}(\clubsuit)} (N,Y)\to 
\mathrm{Hom}_{\overline{\mathbf{M}}(\clubsuit)} (N,Z)\to 0,
\end{displaymath}
which implies that $N$ is relatively projective in 
$\mathrm{sub}_{\boldsymbol{\alpha}}(\overline{\mathbf{M}}(\clubsuit))$.
As $N\tto L$, it is even a progenerator.
\end{proof}

\subsection{Analysis of $\mathrm{F}\, N_s$ and $\hat{P}$}\label{s3.3}

\begin{proposition}\label{prop12}
For any $s=1,2,\dots,n$, the object $\mathrm{F}\, N_s$ has a filtration of length $\dim(A)$ 
in which all subquotients are isomorphic to $N_i$ for some $i\in\{1,2,\dots,n\}$. Moreover,
each such $N_i$ appears as a subquotient in this filtration $l_i$ times, where $l_i$ is the composition
multiplicity of $L_i$ in ${}_AA$.
\end{proposition}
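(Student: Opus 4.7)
The plan is to build the filtration by taking the upper annihilation filtration of $\mathrm{F}\, N_s$ and matching its layers, via the quotient to the cell representation, with the socle filtration of ${}_AA$.

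First, I would apply Lemma~\ref{lem10}(i) to decompose $\mathrm{F}\, N_s=\bigoplus_{i=1}^{n}\mathrm{F}_{is}\, N_s$ and record the dimension identity $\dim(\mathrm{F}\, N_s)=\dim(A)\cdot\dim(N_s)$ from \eqref{eq5}. Since $A$ is basic, $\dim L_i=1$, so the target length $\sum_i l_i=\dim(A)$ is already consistent with the expected total dimension.

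Next, I would take the upper annihilation filtration $0=Z_0\subset Z_1\subset\cdots\subset Z_{\mathbf{c}}=\mathrm{F}\, N_s$, which has length $\mathbf{c}=\mathrm{ll}({}_AA)$ by Lemma~\ref{lem8}. Each layer $Z_k/Z_{k-1}$ is annihilated by $\boldsymbol{\alpha}$, so by Lemma~\ref{lem11} it lies in $\mathrm{sub}_{\boldsymbol{\alpha}}(\overline{\mathbf{M}}(\clubsuit))$, in which $N_1,\dots,N_n$ are indecomposable projective. Restricting to the $2$-subrepresentation $\mathbf{K}$ on $\mathrm{add}(P)$ and passing to the quotient $\mathbf{K}/\mathbf{I}\simeq\mathbf{C}_{\mathcal{L}}\simeq A\text{-}\mathrm{mod}$, the object $\mathrm{F}\, N_s$ is sent to $\mathrm{F}\, L_s\cong{}_AA$, and by Example~\ref{ex501} the image of $Z_{\bullet}$ is precisely the socle filtration of ${}_AA$. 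Its $k$-th layer is semisimple of the form $\bigoplus_i L_i^{m_k^i}$ with $\sum_k m_k^i=l_i$. Combined with Lemma~\ref{lem11}, this exhibits $Z_k/Z_{k-1}$ as a quotient of the projective cover $\bigoplus_i N_i^{m_k^i}$ in $\mathrm{sub}_{\boldsymbol{\alpha}}(\overline{\mathbf{M}}(\clubsuit))$.

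The final step is to promote each quotient map $\bigoplus_i N_i^{m_k^i}\twoheadrightarrow Z_k/Z_{k-1}$ to an isomorphism. Summing dimensions gives $\sum_{k,i} m_k^i \dim(N_i)\geq \dim(A)\dim(N_s)=(\sum_i l_i)\dim(N_s)$, so equality is forced throughout once one has the constancy $\dim(N_i)=\dim(N_s)$ for all $i$; the latter I would establish using Lemma~\ref{lem10}(ii) applied to both $N_s$ and $N_i$, together with transitivity of the block $\mathbf{K}$ to compare the quotients of $P_i$ and $P_s$ obtained this way. Refining each $\bigoplus_i N_i^{m_k^i}$ into its indecomposable summands then yields the claimed filtration, of total length $\sum_i l_i=\dim(A)$ with $N_i$ appearing $\sum_k m_k^i=l_i$ times.

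The main obstacle is this last dimension-balancing step: proving that $\dim(N_i)$ does not depend on $i$ is what rigidifies the ``fibre'' structure carried by the isotypic $2$-representation, and it is where faithfulness of $\mathbf{M}$ and $\mathcal{J}$-simplicity of $\cC_A$ do the essential work.
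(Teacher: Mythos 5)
Your overall strategy --- filter $\mathrm{F}\,N_s$, show each layer is covered by copies of the $N_i$ with controlled multiplicities, and then force the covering maps to be isomorphisms via the identity $\dim(\mathrm{F}\,N_s)=\dim(A)\dim(N_s)$ --- is the right one, and the final dimension-balancing step matches the paper's. But there is a genuine gap where you bound the multiplicities. You take the upper annihilation filtration $Z_\bullet$ of $\mathrm{F}\,N_s$ and claim that $Z_k/Z_{k-1}$ is a quotient of $\bigoplus_i N_i^{\oplus m_k^i}$ with $m_k^i$ read off from the socle filtration of ${}_AA$ in the quotient $\mathbf{K}/\mathbf{I}\simeq\mathbf{C}_{\mathcal{L}}$. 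The passage to $\mathbf{K}/\mathbf{I}$ is only right exact, so it neither carries $Z_\bullet$ to the socle filtration of ${}_AA$ nor controls the top of $Z_k/Z_{k-1}$ --- which is what determines how many copies of each $N_i$ are needed, by Lemma~\ref{lem11} --- in terms of its image downstairs: the top upstairs can be strictly larger than the top of the image. Without the bound that the multiplicity of $L_i$ in $\mathrm{top}(Z_k/Z_{k-1})$, summed over $k$, is at most $l_i$, the estimate $\dim(\mathrm{F}\,N_s)\le\sum_i l_i\dim(N_i)$ is not available and the argument does not close. The paper avoids this entirely by building the filtration upstairs from a composition series $0=C_0\subset\dots\subset C_{\dim(A)}={}_AA$ of the bimodule realizing $\mathrm{F}$: the $i$-th layer is by construction a quotient of $\mathrm{F}_{ts}\,N_s$, where $L_t\cong C_i/C_{i-1}$, and $\mathrm{F}_{ts}\,N_s$ has simple top $L_t$ by Lemma~\ref{lem10}\eqref{lem10.2}, so each layer is a quotient of a single $N_t$, with each $t$ occurring exactly $l_t$ times.

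The second problem is the constancy of $\dim(N_i)$, which you correctly identify as the crux but then defer to ``Lemma~\ref{lem10}\eqref{lem10.2} plus transitivity of $\mathbf{K}$''; transitivity gives generation, not equality of dimensions, and I do not see how that argument would run. In fact no separate input is needed: once each layer is covered by a single $N_t$ with the $t$'s occurring $l_t$ times, apply the resulting inequality $\dim(A)\dim(N_s)=\dim(\mathrm{F}\,N_s)\le\sum_t l_t\dim(N_t)$ to an index $s=j$ maximizing $\dim(N_j)$; since $\sum_t l_t=\dim(A)$, the chain $\dim(A)\dim(N_j)\le\sum_t l_t\dim(N_t)\le\dim(A)\dim(N_j)$ collapses and forces $\dim(N_t)=\dim(N_j)$ for all $t$. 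This is exactly how the paper proceeds, and with constancy in hand the same count shows each covering map is an isomorphism.
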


\begin{proof}
Any composition series 
\begin{equation}\label{eq801}
0=C_0\subset C_1\subset \dots\subset C_{\dim(A)}= {}_AA
\end{equation}
gives rise to the filtration
\begin{equation}\label{eq8}
0=X_0 \subset X_1\subset \dots\subset X_{\dim(A)}= \mathrm{F}\, N_s,
\end{equation}
where $X_i$ is the image of $(C_i\otimes A^{\mathrm{op}})_{N}$ in $\mathrm{F}\, N_s$.
As $\boldsymbol{\alpha}$ annihilates $N_s$, it also annihilates any subquotient $X_i/X_{i-1}$.
Using Lemma~\ref{lem10}\eqref{lem10.2}  and Lemma~\ref{lem11}, we see that each
subquotient $X_i/X_{i-1}$ is a quotient of a sum of copies of $N$.

Now we would like to estimate the number of tops in each $X_i/X_{i-1}$. There is a unique
$t\in\{1,2,\dots,n\}$ such that $Ae_t$ surjects onto $C_i/C_{i-1}$. This induces a morphism of functors
\begin{displaymath}
\bigoplus_{j=1}^n\mathrm{F}_{tj}\to  \mathrm{F}
\end{displaymath}
which, evaluated at $N_s$, induces a surjection $\mathrm{F}_{ts}\, N_s\tto X_i/X_{i-1}$
by Lemma~\ref{lem10}\eqref{lem10.1}.
By  Lemma~\ref{lem10}\eqref{lem10.2}, $\mathrm{F}_{ts}\, N_s$ has simple top $L_t$ and hence 
\begin{equation}\label{eq12}
N_t\tto X_i/X_{i-1}. 
\end{equation}
Therefore the number of times $L_t$ appears as simple top in some subquotient
in filtration~\eqref{eq8} is at most $l_t$. 

Now choose $j$ such that $\dim(N_j)$ is maximal possible. From the above, by additivity, we have
\begin{equation}\label{eq9}
\dim(\mathrm{F}\, N_j)\leq \sum_{t=1}^nl_t\dim(N_t)\leq \sum_{t=1}^nl_t\dim(N_j)=\dim(A)\dim(N_j)
\end{equation}
since $\sum_{t=1}^nl_t=\dim(A)$. If there were $t$ such that $\dim(N_t)<\dim(N_j)$, then
the second inequality in \eqref{eq9} would be strict. At the same time, 
we know that $\dim(\mathrm{F}\, N_j)=\dim(A)\dim(N_j)$ 
by \eqref{eq5}, a contradiction. Therefore $\dim(N_t)=\dim(N_j)$ for all $t$. 

Hence, the total number of simple tops in all subquotients  of filtration \eqref{eq8} is exactly 
$\sum_{t=1}^n l_t=\dim(A)$. By \eqref{eq5} and the above, we have
\begin{displaymath}
\dim(\mathrm{F}\, N_s)=\dim(\mathrm{F}\, N_j)=\dim(A)\dim(N_j).
\end{displaymath}
By comparing dimensions, we see that the surjection \eqref{eq12} is, in fact, an isomorphism.
The claim follows.
\end{proof}

For any two $1$-morphisms $\mathrm{G}$ and $\mathrm{H}$ in $\cC$, and any $2$-morphism 
$\beta:\mathrm{G}\to\mathrm{H}$, we consider the right exact endofunctor 
$\mathrm{Coker}(\overline{\mathbf{M}}(\beta))$ of $\overline{\mathbf{M}}(\clubsuit)$.
For $i,j\in\{1,2,\dots,n\}$, we denote by $\mathrm{Q}_{ij}$ the endofunctor 
$\mathrm{Coker}(\overline{\mathbf{M}}(\beta_{ij}))$ where 
$\beta_{ij}: \mathrm{G}_{ij}\to \mathrm{F}_{ij}$
corresponds to a presentation for the simple quotient of $Ae_i\otimes_{\Bbbk}e_jA$ 
in the category of $A\text{-}A$-bimodules.

\begin{lemma}\label{lem19}
{\hspace{2mm}}

\begin{enumerate}[$($i$)$]
\item\label{lem19.1} For $i,j\in\{1,2,\dots,n\}$, we have
\begin{displaymath}
\mathrm{Q}_{ij}\circ\mathrm{Q}_{kl}\cong
\begin{cases}
\mathrm{Q}_{il} & \text{if } j=k,\\
0 & \text{otherwise}.
\end{cases}
\end{displaymath}
\item\label{lem19.2} For $i,j,s\in\{1,2,\dots,n\}$, we have
\begin{displaymath}
\mathrm{Q}_{ij}\, N_s\cong
\begin{cases}
N_i & \text{if } j=s,\\
0 & \text{otherwise}.
\end{cases}
\end{displaymath}
\item\label{lem19.3} For $i\in\{1,2,\dots,n\}$, the restriction of 
\begin{displaymath}
Q:=\bigoplus_{i=1}^n\mathrm{Q}_{ii} 
\end{displaymath}
to $\mathrm{sub}_{\boldsymbol{\alpha}}(\overline{\mathbf{M}}(\clubsuit))$ is isomorphic to the identity functor.
\end{enumerate}
\end{lemma}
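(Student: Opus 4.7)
The plan is to prove (ii) first, from which (iii) and (i) follow by standard functorial arguments. For (ii), the case $j\neq s$ is immediate: $\mathrm{F}_{ij}\, N_s=0$ by Lemma~\ref{lem10}\eqref{lem10.1}, so $\mathrm{Q}_{ij}\, N_s=0$. For $j=s$, I would refine the composition series of ${}_AA$ used in Proposition~\ref{prop12} so that it passes through $Ae_1,Ae_2,\ldots, Ae_n$ in turn, and within each $Ae_i$ follows a top-down composition series. The induced filtration of $\mathrm{F}_{is}\, N_s$ then has top layer $N_i$; let $K$ denote the kernel of the projection onto this top layer. It remains to show $K$ coincides with the image of $\beta_{is,N_s}\colon \mathrm{G}_{is}\, N_s\to \mathrm{F}_{is}\, N_s$. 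In $\cC_A$ the $2$-morphism $\beta_{is}$ presents the inclusion $\mathrm{rad}(Ae_i\otimes_{\Bbbk} e_sA)\hookrightarrow Ae_i\otimes_{\Bbbk}e_sA$; its image on $M$ equals $(\mathrm{rad}(Ae_i)\otimes e_sA+Ae_i\otimes e_s\mathrm{rad}(A))\cdot M$. The second summand acts trivially on $N_s$ because $\boldsymbol{\alpha}$ annihilates $N_s$, while the first reproduces exactly $K$. These identifications can be checked in the faithful defining $2$-representation $\mathbf{C}_{\mathcal{L}}\simeq A\text{-mod}$ and transferred to $\cC_A$ (and hence to $\mathbf{M}$) using $\mathcal{J}$-simplicity.

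For part (iii), applying part (ii) yields $Q\, N_s=\bigoplus_i \mathrm{Q}_{ii}\, N_s = N_s$ for each $s$, hence a natural isomorphism $Q\, N\cong N$ of $\mathrm{End}(N)^{\mathrm{op}}$-modules. Since $N$ is a progenerator of $\mathrm{sub}_{\boldsymbol{\alpha}}(\overline{\mathbf{M}}(\clubsuit))$ by Lemma~\ref{lem11} and $Q$ is right exact, this extends uniquely to a natural isomorphism between the restriction of $Q$ and the identity on the subcategory. For part (i), the isomorphism $\mathrm{Q}_{ij}\mathrm{Q}_{kl}\cong\mathrm{Q}_{il}$ (when $j=k$, and $\mathrm{Q}_{ij}\mathrm{Q}_{kl}\cong 0$ otherwise) is constructed via the $2$-morphism in $\cC_A$ implementing the bimodule contraction $L_j^{\mathrm{op}}\otimes_A L_k\cong\delta_{jk}\Bbbk$; by faithfulness of $\mathbf{C}_{\mathcal{L}}$ its invertibility holds in $\cC_A$, and hence in $\mathbf{M}$ by $\mathcal{J}$-simplicity. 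Since the same factorisation argument as in (ii) shows $\mathrm{Q}_{ij}$ takes values in $\mathrm{sub}_{\boldsymbol{\alpha}}(\overline{\mathbf{M}}(\clubsuit))$, both sides factor through this subcategory, and by (ii) they agree on the progenerator $N$, so the natural isomorphism extends to all of $\overline{\mathbf{M}}(\clubsuit)$.

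The main obstacle is the kernel/image matching in part (ii): pinning down the cokernel $\mathrm{Q}_{is}\, N_s$ precisely as the top filtration layer $N_i$ requires coordinating the bimodule presentation of $\beta_{is}$ in $\cC_A$ with the refined Proposition~\ref{prop12} filtration, and one must leverage $\mathcal{J}$-simplicity carefully so that the identifications valid in the defining representation descend to genuine relations among $2$-morphisms in $\cC_A$ itself.
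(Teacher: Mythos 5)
Your treatment of part (ii) is essentially the paper's: the case $j\neq s$ via Lemma~\ref{lem10}\eqref{lem10.1}, and the case $j=s$ by comparing the cokernel of $\beta_{is,N_s}$ with the top layer of the filtration from Proposition~\ref{prop12}. The paper only says this ``follows from the proof of Proposition~\ref{prop12}''; your decomposition of the image of $\beta_{is,N_s}$ into $(\mathrm{rad}(Ae_i)\otimes e_sA)\cdot(-)$ plus a piece killed because $\boldsymbol{\alpha}$ annihilates $N_s$ is exactly the intended content and is correct. For part (iii) you also follow the paper's route, but note the order matters: the paper first constructs an explicit epimorphic natural transformation $\gamma:\mathrm{Id}\tto Q$ from the bimodule surjection $A\tto A/\mathrm{rad}(A)$, and only then uses (ii) to see that $\gamma_{N_s}$ is invertible; starting from the object-level isomorphisms $Q\,N_s\cong N_s$ of part (ii) and asserting that they assemble into a natural isomorphism on $\mathrm{add}(N)$ is backwards --- naturality has to come from somewhere, and the map $\gamma$ is what supplies it. This is fixable but should be said.

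The genuine gap is in part (i). Your fallback argument --- both sides take values in $\mathrm{sub}_{\boldsymbol{\alpha}}(\overline{\mathbf{M}}(\clubsuit))$ and agree on $N$, hence agree everywhere --- does not work: $N$ is a progenerator of the subcategory $\mathrm{sub}_{\boldsymbol{\alpha}}(\overline{\mathbf{M}}(\clubsuit))$ (Lemma~\ref{lem11}), not of $\overline{\mathbf{M}}(\clubsuit)$, and a right exact endofunctor of $\overline{\mathbf{M}}(\clubsuit)$ is determined by its values on $\mathrm{add}(\hat{P})$, not on $\mathrm{add}(N)$. Two right exact functors with image in $\mathrm{sub}_{\boldsymbol{\alpha}}$ that agree on $\mathrm{add}(N)$ can still differ on $\hat{P}$, so this at best yields the isomorphism of the restrictions to $\mathrm{sub}_{\boldsymbol{\alpha}}(\overline{\mathbf{M}}(\clubsuit))$, which is weaker than the statement. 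Your other strand (a contraction $2$-morphism checked in $\cC_A$) is the right idea but is missing its key ingredient: one must first identify $\mathrm{Q}_{ij}\circ\mathrm{Q}_{kl}$ with the cokernel of the explicit $2$-morphism
\begin{displaymath}
\big((\beta_{ij})_{\mathrm{F}_{kl}},\,\mathrm{F}_{ij}(\beta_{kl})\big):\
\mathrm{G}_{ij}\mathrm{F}_{kl}\oplus \mathrm{F}_{ij}\mathrm{G}_{kl}\longrightarrow \mathrm{F}_{ij}\mathrm{F}_{kl},
\end{displaymath}
and then verify, by a computation with $A$-$A$-bimodule maps, that this cokernel agrees with that of $\beta_{il}$ when $j=k$ and vanishes otherwise. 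Once the identification is witnessed by $2$-morphisms of $\cC_A$, it transports to $\overline{\mathbf{M}}$ simply by $2$-functoriality; neither faithfulness of $\mathbf{C}_{\mathcal{L}}$ nor $\mathcal{J}$-simplicity plays a role here ($\mathcal{J}$-simplicity constrains $2$-ideals of $\cC_A$; it is not a device for transporting isomorphisms of cokernel functors between $2$-representations). As written, your part (i) neither performs the $\cC_A$-level computation nor supplies a valid substitute for it.
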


\begin{proof}
The composition $\mathrm{Q}_{ij}\circ\mathrm{Q}_{kl}$ is isomorphic to the cokernel of the map
\begin{displaymath}
\xymatrix{
\mathrm{G}_{ij}\mathrm{F}_{kl}\oplus \mathrm{F}_{ij}\mathrm{G}_{kl}
\ar[rrrr]^{((\beta_{ij})_{\mathrm{F}_{kl}},\mathrm{F}_{ij}(\beta_{kl}))}&&&&
\mathrm{F}_{ij}\mathrm{F}_{kl}, 
}
\end{displaymath}
compare \cite[Section~3.5]{MM1}. On the level of $A\text{-}A$-bimodules, it is easy to check that the
cokernel of $((\beta_{ij})_{\mathrm{F}_{kl}},\mathrm{F}_{ij}(\beta_{kl}))$ is isomorphic to the cokernel of
$\beta_{il}$ if $k=j$ and to zero otherwise. Claim~\eqref{lem19.1} follows.

That $\mathrm{Q}_{ij}\, N_s=0$ for $j\neq s$ follows from Lemma~\ref{lem10}\eqref{lem10.1}.
If $j=s$, then $\mathrm{Q}_{is}\, N_s=N_s$ follows from the proof of Proposition~\ref{prop12}.
This proves claim~\eqref{lem19.2}.

On the level of $A\text{-}A$-bimodules, the quotient of $A$ modulo its radical is isomorphic to the direct sum
of the cokernels of $\beta_{ii}$ for $i=1,2,\dots,n$. This  gives an epimorphic natural transformation
$\gamma:\mathrm{Id}_{\overline{\mathbf{M}}(\clubsuit)}\tto Q$.
For every $s=1,2,\dots,n$,  the epimorphism $\gamma_{N_s}$ is, in fact, an isomorphism due to claim~\eqref{lem19.2}.
As $N$ is a progenerator in $\overline{\mathbf{M}}(\clubsuit)$ by Lemma~\ref{lem11}, claim~\eqref{lem19.3}
follows and the proof is complete.
\end{proof}

\begin{lemma}\label{lem16}
{\hspace{2mm}}

\begin{enumerate}[$($i$)$] 
\item\label{lem16.1}
For $i,s\in\{1,2,\dots,n\}$ such that $\mathrm{al}(\mathrm{F}_{is}\, N_s)=\mathbf{c}$, we have
\begin{displaymath}
\mathrm{und}_{\boldsymbol{\alpha}}(\mathrm{F}_{is}\,N_s)=
\mathrm{sub}^{\mathbf{c}-1}_{\boldsymbol{\alpha}}(\mathrm{F}_{is}\,N_s)\quad\text{ and }\quad
\mathrm{F}_{is}\, N_s/\mathrm{und}_{\boldsymbol{\alpha}}(\mathrm{F}_{is}\,N_s)\cong N_i.
\end{displaymath}
\item\label{lem16.2} 
We have $(\mathrm{F}_{is}\, P)\cong P_i^{\oplus \dim(Ae_{\sigma^{-1}(s)})}$ and hence
\begin{displaymath}
(\mathrm{F}_{is}\, P)/\mathrm{und}_{\boldsymbol{\alpha}}(\mathrm{F}_{is}\,P)
\cong N_i^{\oplus \dim(Ae_{\sigma^{-1}(s)})}.
\end{displaymath}
\end{enumerate}
\end{lemma}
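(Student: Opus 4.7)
For part (ii), I begin by verifying that $\mathrm{F}_{is}\,P$ is projective. Since $\cC_A$ is weakly fiat, the right adjoint $\mathrm{F}_{is}^{*}=\mathrm{F}_{\sigma^{-1}(s),i}$ is also a left adjoint and hence exact, so $\mathrm{Hom}(\mathrm{F}_{is}\,P,-)\cong\mathrm{Hom}(P,\mathrm{F}_{is}^{*}\,-)$ is exact. To pin down the decomposition, I compute the top via adjunction and Lemma~\ref{lem9}: $\mathrm{Hom}(\mathrm{F}_{is}\,P,L_t)=\mathrm{Hom}(P,\mathrm{F}_{\sigma^{-1}(s),i}\,L_t)=\delta_{it}\,\mathrm{Hom}(P,\tilde P_{\sigma^{-1}(s)})$. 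Under the equivalence of $\mathbf{K}/\mathbf{I}$ with $A\text{-}\mathrm{mod}$, the object $\tilde P_{\sigma^{-1}(s)}$ corresponds to $Ae_{\sigma^{-1}(s)}$, so $\dim\mathrm{Hom}(P,\tilde P_{\sigma^{-1}(s)})=\sum_{j}\dim e_jAe_{\sigma^{-1}(s)}=\dim Ae_{\sigma^{-1}(s)}$. Projectivity with this top forces $\mathrm{F}_{is}\,P\cong P_i^{\oplus\dim Ae_{\sigma^{-1}(s)}}$, and the quotient formula follows from additivity of $(-)/\mathrm{und}_{\boldsymbol{\alpha}}(-)$ on direct sums of projectives combined with $P_i/\mathrm{und}_{\boldsymbol{\alpha}}(P_i)=N_i$.

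For part (i), write $X=\mathrm{F}_{is}\,N_s$. The strategy is to exhibit a minimal annihilation filtration coming from the radical series of $Ae_i$. Choosing a composition series of ${}_AA$ that refines both the decomposition $A=\bigoplus_j Ae_j$ and the radical filtration of each $Ae_j$, and restricting the construction in Proposition~\ref{prop12} to the $i$th summand, I obtain a filtration of $X$ whose subquotients are $N_t$ for $L_t$ traversing the composition factors of $Ae_i$ in radical order. Collapsing composition-level layers to radical-level layers yields an annihilation filtration $0=Z_{0}\subset Z_{1}\subset\cdots\subset Z_{\ell}=X$ of length $\ell=\mathrm{ll}(Ae_i)\leq\mathbf{c}$. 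The hypothesis $\mathrm{al}(X)=\mathbf{c}$ forces $\mathrm{ll}(Ae_i)=\mathbf{c}$, whence $Ae_i/\mathrm{rad}(Ae_i)=L_i$ and $\mathrm{rad}^{\mathbf{c}-1}(Ae_i)=\mathrm{soc}(Ae_i)=L_{\sigma(i)}$; in particular $Z_{\mathbf{c}}/Z_{\mathbf{c}-1}\cong N_i$ and $Z_{1}\cong N_{\sigma(i)}$.

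To obtain $X/\mathrm{und}_{\boldsymbol{\alpha}}(X)\cong N_i$, apply Proposition~\ref{prop4}(i) to get $\mathrm{und}_{\boldsymbol{\alpha}}(X)\subset Z_{\mathbf{c}-1}\subset\mathrm{sub}_{\boldsymbol{\alpha}}^{\mathbf{c}-1}(X)$, which gives a surjection $X/\mathrm{und}_{\boldsymbol{\alpha}}(X)\twoheadrightarrow X/Z_{\mathbf{c}-1}\cong N_i$. Conversely, Lemma~\ref{lem10}(ii) provides $P_i\twoheadrightarrow X$, so the simple top of $X/\mathrm{und}_{\boldsymbol{\alpha}}(X)$ is $L_i$; since this quotient lies in the subcategory for which $N$ is a progenerator (Lemma~\ref{lem11}), the indecomposable relative projective $N_i$ maps onto it. The composition of the two surjections is a surjective endomorphism of the finite-dimensional indecomposable $N_i$, hence an isomorphism, so $X/\mathrm{und}_{\boldsymbol{\alpha}}(X)\cong N_i$ and $\mathrm{und}_{\boldsymbol{\alpha}}(X)=Z_{\mathbf{c}-1}$.

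The remaining equality $\mathrm{und}_{\boldsymbol{\alpha}}(X)=\mathrm{sub}_{\boldsymbol{\alpha}}^{\mathbf{c}-1}(X)$ is the main obstacle, since Proposition~\ref{prop4}(i) only supplies the one-sided inclusion. My plan is to run the dual of the preceding squeezing argument on the socle side: the adjunction $\mathrm{Hom}(L_t,X)=\mathrm{Hom}(\mathrm{F}_{s,\sigma(i)}\,L_t,N_s)$ together with Lemma~\ref{lem9} concentrates $\mathrm{soc}(X)$ at $L_{\sigma(i)}$, and then an injective-envelope analogue of the projective cover argument inside the subcategory described by Lemma~\ref{lem11} should identify $\mathrm{sub}_{\boldsymbol{\alpha}}(X)$ with $N_{\sigma(i)}$ by comparison with $Z_1$. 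The remaining bookkeeping compares the upper and lower annihilation filtrations of total length $\mathbf{c}$ (Proposition~\ref{prop4}(ii)) sandwiched pointwise between the bounds $\mathrm{und}_{\boldsymbol{\alpha}}^{\mathbf{c}-k}(X)\subset Z_k\subset\mathrm{sub}_{\boldsymbol{\alpha}}^{k}(X)$, with the total dimension constraint $\dim X=\dim Ae_i\cdot\dim N_1$ propagating the equality from both ends to force $Z_k=\mathrm{sub}_{\boldsymbol{\alpha}}^{k}(X)=\mathrm{und}_{\boldsymbol{\alpha}}^{\mathbf{c}-k}(X)$ throughout, and in particular at $k=\mathbf{c}-1$.
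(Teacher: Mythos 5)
Your part (ii) and the second assertion of part (i) essentially reproduce the paper's arguments: the adjunction--plus--Lemma~\ref{lem9} computation of $\mathrm{Hom}(\mathrm{F}_{is}\,P,L_t)$ yielding $\mathrm{F}_{is}\,P\cong P_i^{\oplus\dim(Ae_{\sigma^{-1}(s)})}$, and, for $\mathrm{F}_{is}\,N_s/\mathrm{und}_{\boldsymbol{\alpha}}(\mathrm{F}_{is}\,N_s)\cong N_i$, the combination of the surjection onto $N_i$ coming from the top layer of the filtration of Proposition~\ref{prop12}, relative projectivity of $N_i$ in the subcategory of $\boldsymbol{\alpha}$-annihilated objects (Lemma~\ref{lem11}), and the simple top supplied by Lemma~\ref{lem10}\eqref{lem10.2}. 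These parts are correct and match the paper.

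The genuine gap is the first equality of part (i), which you explicitly leave as a ``plan''. Two problems with that plan. First, the proposed dual, socle-side squeeze needs a relative-injectivity analogue of Lemma~\ref{lem11} that is nowhere established and not obviously available. Second, and more seriously, the concluding ``dimension propagation'' cannot work as described: Proposition~\ref{prop4}\eqref{prop4.1} gives $\mathrm{und}^{\mathbf{c}-k}_{\boldsymbol{\alpha}}(X)\subset Z_k\subset\mathrm{sub}^k_{\boldsymbol{\alpha}}(X)$ for \emph{every} minimal annihilation filtration $Z_\bullet$, and all three filtrations begin at $0$ and end at $X$, so each has ``total dimension'' $\dim(X)$ automatically; equality of totals therefore imposes no constraint, and the termwise equality $\dim\mathrm{sub}^k_{\boldsymbol{\alpha}}(X)=\dim\mathrm{und}^{\mathbf{c}-k}_{\boldsymbol{\alpha}}(X)$ is precisely what has to be proven, not something that propagates for free. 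Note that Proposition~\ref{prop4} always places $\mathrm{und}_{\boldsymbol{\alpha}}$ \emph{inside} $\mathrm{sub}^{\mathbf{c}-1}_{\boldsymbol{\alpha}}$, so the substance of the claim is the reverse containment $\mathrm{sub}^{\mathbf{c}-1}_{\boldsymbol{\alpha}}(\mathrm{F}_{is}\,N_s)\subset\mathrm{Ker}$, where $\mathrm{Ker}$ is the kernel of the surjection $\mathrm{F}_{is}\,N_s\tto N_i$. The paper's route here is to establish $\mathrm{und}_{\boldsymbol{\alpha}}(\mathrm{F}_{is}\,N_s)=\mathrm{Ker}$ (exactly your projective-cover step) and to combine this with the containment $\mathrm{Ker}\subset\mathrm{sub}^{\mathbf{c}-1}_{\boldsymbol{\alpha}}(\mathrm{F}_{is}\,N_s)$ extracted from the proof of Proposition~\ref{prop12}. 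Until you supply an actual argument identifying $\mathrm{sub}^{\mathbf{c}-1}_{\boldsymbol{\alpha}}(\mathrm{F}_{is}\,N_s)$ with this kernel, the first half of (i) --- which is exactly what is used to collapse the chain \eqref{eq22} in the proof of Proposition~\ref{prop14} --- remains unproven in your write-up.
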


\begin{proof}
From Proposition~\ref{prop12}, we obtain an exact sequence $\mathrm{Ker}\hookrightarrow \mathrm{F}_{is}\,N_s\tto N_i$
and the proof of Proposition~\ref{prop12} shows that 
$\mathrm{Ker}\subset \mathrm{sub}^{\mathbf{c}-1}_{\boldsymbol{\alpha}}(\mathrm{F}_{is}\,N_s)$.
On the other hand, $(\mathrm{F}_{is}\,N_s)/\mathrm{und}_{\boldsymbol{\alpha}}(\mathrm{F}_{is}\,N_s)\tto N_i$
as $N_i$ is annihilated by $\boldsymbol{\alpha}$. By relative projectivity of $N_i$,
see Lemma~\ref{lem11}, it is a direct summand of 
$(\mathrm{F}_{is}\,N_s)/\mathrm{und}_{\boldsymbol{\alpha}}(\mathrm{F}_{is}\,N_s)$ and hence
coincides with the latter as both have simple tops. This proves claim~\eqref{lem16.1}.

To see claim~\eqref{lem16.2}, note that $\mathrm{F}_{is}\, P$ is projective, and that, using adjunction,
\begin{equation}\label{eq161}
\mathrm{Hom}_{\overline{\mathbf{M}}(\clubsuit)}(\mathrm{F}_{is}\, P, L_k) \cong  \mathrm{Hom}_{\overline{\mathbf{M}}(\clubsuit)}(P,\mathrm{F}_{\sigma^{-1}(s)i}\,  L_k) 
\end{equation}
is zero unless $k=i$ by Lemma \ref{lem9}, implying that $\mathrm{F}_{is}\, P$ is a direct sum of copies of $P_i$.  
If $k=i$, the space in \eqref{eq161} is isomorphic to 
$ \mathrm{Hom}_{\overline{\mathbf{M}}(\clubsuit)}(P,\tilde{P}_{\sigma^{-1}(s)})$, 
again by Lemma \ref{lem9}, which has dimension $\dim(\tilde{P}_{\sigma^{-1}(s)}) = \dim(Ae_{\sigma^{-1}(s)})$.  
Both claims in \eqref{lem16.2} follow.
\end{proof}

\begin{proposition}\label{prop14}
We have $\mathrm{F}\, N_s\cong P$ for every $s\in\{1,2,\dots,n\}$.
\end{proposition}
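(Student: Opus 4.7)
The plan is to exhibit a natural surjection $\pi\colon P\twoheadrightarrow \mathrm{F}\,N_s$ and then show, by a dimension count, that $\pi$ is an isomorphism.

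For the surjection, Lemma~\ref{lem10}(i) gives $\mathrm{F}\,N_s=\bigoplus_{i=1}^n\mathrm{F}_{is}\,N_s$, and Lemma~\ref{lem10}(ii) provides, for each $i$, a surjection $P_i\twoheadrightarrow \mathrm{F}_{is}\,N_s$. Since $P_i$ is indecomposable projective with simple top $L_i$, each nonzero quotient $\mathrm{F}_{is}\,N_s$ is itself indecomposable with simple top $L_i$; assembling the surjections yields a projective cover $\pi\colon P\twoheadrightarrow \mathrm{F}\,N_s$. Hence $\pi$ is an isomorphism if and only if $\dim P = \dim\mathrm{F}\,N_s$.

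By Proposition~\ref{prop12} together with the constancy of $d:=\dim N_k$ in $k$ (a fact extracted from the proof of Proposition~\ref{prop12}), one obtains $\dim\mathrm{F}\,N_s = d\cdot\dim A$. For the other side, I would use the lower annihilation filtration of $P$ with respect to $\boldsymbol\alpha$: each subquotient is annihilated by $\boldsymbol\alpha$ and so, by Lemma~\ref{lem11}, is a direct sum of copies of various $N_k$. Thus $\dim P = d\cdot M$ for some integer $M$, and surjectivity of $\pi$ already forces $M\ge \dim A$; the task reduces to the opposite inequality $M\le \dim A$.

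This upper bound is the crux and the step I expect to be the principal obstacle. My plan is first to establish $\mathrm{al}(\mathrm{F}_{is}\,N_s)=\mathbf c$ for \emph{every} $i$ (not just for some $i$, as follows directly from Lemma~\ref{lem8} combined with the direct sum decomposition of $\mathrm{F}\,N_s$). Granted this, Lemma~\ref{lem16}(i) identifies the top annihilation quotient of each $\mathrm{F}_{is}\,N_s$ as $N_i$ and pins down its annihilation length structure. Combining with Lemma~\ref{lem16}(ii), which describes $\mathrm{F}_{is}\,P$ as a direct sum of copies of $P_i$, one can then compare the annihilation filtrations of $P_i$ and of $\mathrm{F}_{is}\,N_s$ via the surjection $P_i\twoheadrightarrow \mathrm{F}_{is}\,N_s$ and iterate down these filtrations to force the equality $M=\dim A$. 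The delicate point is to propagate the cell-representation structure---rigidified by isotypicality via Lemma~\ref{lem6}---throughout $\overline{\mathbf M}(\clubsuit)$ so as to exclude any strict containment on the kernel side.
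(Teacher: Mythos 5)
Your reduction to a dimension count starts out fine: the surjection $\pi\colon P\tto\mathrm{F}\,N_s$ assembled from Lemma~\ref{lem10} is precisely the map $f_1$ in the paper's diagram \eqref{eq120}, and $\dim(\mathrm{F}\,N_s)=\dim(A)\dim(N_s)$ follows from \eqref{eq5} together with the constancy of $\dim(N_k)$ extracted from the proof of Proposition~\ref{prop12}. But both remaining steps fail as written. First, Lemma~\ref{lem11} does not say that an object annihilated by $\boldsymbol{\alpha}$ is a direct sum of copies of the $N_k$; it says that $N$ is a relatively projective generator of $\mathrm{sub}_{\boldsymbol{\alpha}}(\overline{\mathbf{M}}(\clubsuit))$, a category equivalent to modules over $\mathrm{End}(N)^{\mathrm{op}}$, which is in general far from semisimple (the whole point of the theorem is that $\mathrm{End}(\hat{N}_s)$ can be an arbitrary basic algebra $B$). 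The subquotients of the lower annihilation filtration of $P$ are therefore arbitrary objects killed by $\boldsymbol{\alpha}$, their dimensions need not be multiples of $d$, and the identity $\dim P=d\cdot M$ is unjustified.

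Second, the step you yourself flag as the crux is the actual content of the proposition, and your sketch for it lacks the idea that makes it work. Everything you propose to use --- the surjections $P_i\tto\mathrm{F}_{is}\,N_s$, Lemma~\ref{lem16}, annihilation lengths --- only ever bounds $\dim(\mathrm{F}_{is}\,N_s)$ from above by $\dim(P_i)$, which you already have; comparing lower annihilation filtrations along a surjection cannot produce the reverse inequality. The paper instead constructs a map in the opposite direction: via the diagram \eqref{eq120} it shows $\mathrm{sub}_{\boldsymbol{\alpha}}(\mathrm{F}\,N_s)\cong N$ and, by relative projectivity (Lemma~\ref{lem11} used correctly), that this copy of $N$ splits off $\mathrm{sub}_{\boldsymbol{\alpha}}(P)$, yielding an embedding $N_s\hookrightarrow P$. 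Applying the exact functor $\mathrm{F}_{is}$ gives $\mathrm{F}_{is}\,N_s\hookrightarrow\mathrm{F}_{is}\,P$ with projective target by Lemma~\ref{lem16}\eqref{lem16.2}; the chain of equalities \eqref{eq22} makes the left square of \eqref{eq27} a pullback, a splitting of the induced monomorphism of cokernels lifts to a splitting of the embedding, and hence $\mathrm{F}_{is}\,N_s$ is a direct summand of a projective with simple top $L_i$, i.e.\ $\mathrm{F}_{is}\,N_s\cong P_i$ (first for one well-chosen $i$ with $\mathrm{al}(\mathrm{F}_{is}\,N_s)=\mathrm{al}(\mathrm{F}_{is}\,P)=\mathbf{c}$, then for all $j$ because projective functors preserve projectives and $\mathrm{F}_{js}$ is a summand of $\mathrm{F}\circ\mathrm{F}_{is}$). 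Without some construction of an embedding of $N_s$ (or $N$) into $P$, your plan cannot close the gap; note also that your preliminary goal $\mathrm{al}(\mathrm{F}_{is}\,N_s)=\mathbf{c}$ for \emph{every} $i$ is neither established nor needed in the paper's argument.
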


\begin{proof}
Consider the diagram 
\begin{equation}\label{eq120}
\xymatrix{ 
P \ar@{->>}[rr]^{f_1}&& \mathrm{F}\, N_s\ar@{->>}[rr]^{f_2}\ar@{->>}[d]^{g_2} 
&& \mathrm{F}\, L_s\ar@{->>}[d]^{g_3}\\
\mathrm{sub}_{\boldsymbol{\alpha}}(P)\ar@{->>}[rr]_{h_1} &&
\mathrm{sub}_{\boldsymbol{\alpha}}(\mathrm{F}\, N_s)\ar@{->>}[rr]_{h_2} &&
\mathrm{sub}_{\boldsymbol{\alpha}}(\mathrm{F}\, L_s)
}
\end{equation}
in which $f_1$ is an epimorphism given by Lemma~\ref{lem10}\eqref{lem10.2}, and 
$f_2$ is an epimorphism induced by the natural projection $N_s\tto L_s$ using exactness of $\mathrm{F}$.
Comparing the action of $\mathrm{F}$ on $\mathbf{K}/\mathbf{I}$ and
on $A\text{-}\mathrm{mod}$, the latter being equivalent to $\mathbf{C}_{\mathcal{L}}$
by \cite[Proposition~9]{MM5}, and using Example~\ref{ex501}, we see that $\mathrm{F}\, L_s$ has isomorphic
top and socle and that this socle coincides with $\mathrm{sub}_{\boldsymbol{\alpha}}(\mathrm{F}\, L_s)$.
Let $\varphi:A\to A$ be an endomorphism of $A$ which induces and isomorphism between 
the top and the socle of ${}_AA$. Define $g_2:=(\varphi\otimes 1)_{N_s}$ and
$g_3:=(\varphi\otimes 1)_{L_s}$ and let  $h_2$ be the restrictions of $f_2$. Then the
right square of the diagram commutes. The fact that the map $g_3$ is an epimorphism again follows by 
comparing the action of $\mathrm{F}$ on $\mathbf{K}/\mathbf{I}$ and
on $A\text{-}\mathrm{mod}$. This implies that $h_2$ is an epimorphism. Finally,  $g_2$ is an epimorphism
since the top of $\mathrm{F}\, N_s$ maps isomorphically on the top of $\mathrm{F}\, L_s$ by $f_2$,
the latter top maps isomorphically onto $\mathrm{sub}_{\boldsymbol{\alpha}}(\mathrm{F}\, L_s)$
by $g_3$, and the top of $\mathrm{sub}_{\boldsymbol{\alpha}}(\mathrm{F}\, N_s)$
maps isomorphically onto $\mathrm{sub}_{\boldsymbol{\alpha}}(\mathrm{F}\, L_s)$ by $h_2$.

Define $h_1$ to be the restriction of $f_1$.
The isomorphism theorem yields
\begin{displaymath}
\mathrm{sub}_{\boldsymbol{\alpha}}(\mathrm{F}\, L_s)\cong
(\mathrm{sub}_{\boldsymbol{\alpha}}(P)+ IP)/IP\cong
\mathrm{sub}_{\boldsymbol{\alpha}}(P)/(\mathrm{sub}_{\boldsymbol{\alpha}}(P)\cap IP)
\end{displaymath}
whence $h_2h_1$ is an epimorphism. Since the map $h_2$ is just factoring out the radical, the map
$h_1$ must be an epimorphism.

The lower row of \eqref{eq120} belongs to $\mathrm{sub}_{\boldsymbol{\alpha}}(\overline{\mathbf{M}}(\clubsuit))$.
We have $\mathrm{sub}_{\boldsymbol{\alpha}}(\mathrm{F}\, N_s)\in\mathrm{add}(N)$ by Proposition~\ref{prop12}, 
moreover, $\mathrm{sub}_{\boldsymbol{\alpha}}(\mathrm{F}\, N_s)\tto L$ as
$\mathrm{sub}_{\boldsymbol{\alpha}}(\mathrm{F}\, L_s)\tto L$. As the map 
$\mathrm{sub}_{\boldsymbol{\alpha}}(\mathrm{F}\, N_s)\tto L$ induces an isomorphism modulo the radical,
we obtain $\mathrm{sub}_{\boldsymbol{\alpha}}(\mathrm{F}\, N_s)\cong N$.
Since $N$ is relatively projective in $\mathrm{sub}_{\boldsymbol{\alpha}}(\overline{\mathbf{M}}(\clubsuit))$
by Lemma~\ref{lem11}, the object $\mathrm{sub}_{\boldsymbol{\alpha}}(\mathrm{F}\, N_s)\cong N$ splits off
as a direct summand of $\mathrm{sub}_{\boldsymbol{\alpha}}(P)$. 

For $i\in\{1,2,\dots,n\}$, applying $\mathrm{F}_{is}$ to the short exact sequence
\begin{displaymath}
0\to N_s\to P\to \mathrm{Coker}\to 0, 
\end{displaymath}
produces the short exact sequence
\begin{displaymath}
0\to \mathrm{F}_{is}\,N_s\to \mathrm{F}_{is}\,P\to \mathrm{F}_{is}\,\mathrm{Coker}\to 0.
\end{displaymath}
Using Lemma~\ref{lem8} and additivity, there is $i$ such that 
$\mathrm{al}(\mathrm{F}_{is}\,N_s)=\mathrm{al}(\mathrm{F}_{is}\,P)=\mathbf{c}$.

Now we claim that 
\begin{multline}\label{eq22}
\mathrm{und}_{\boldsymbol{\alpha}}(\mathrm{F}_{is}\,N_s)\subset 
\mathrm{und}_{\boldsymbol{\alpha}}(\mathrm{F}_{is}\,P)\cap (\mathrm{F}_{is}\,N_s)\subset\\\subset 
\mathrm{sub}_{\boldsymbol{\alpha}}^{\mathbf{c}-1}(\mathrm{F}_{is}\,P)\cap (\mathrm{F}_{is}\,N_s)\subset 
\mathrm{sub}_{\boldsymbol{\alpha}}^{\mathbf{c}-1}(\mathrm{F}_{is}\,N_s). 
\end{multline}
Indeed, the first inclusion is due to the fact that 
\begin{displaymath}
(\mathrm{F}_{is}\,N_s)/\big(\mathrm{und}_{\boldsymbol{\alpha}}(\mathrm{F}_{is}\,P)\cap (\mathrm{F}_{is}\,N_s)\big)
\hookrightarrow (\mathrm{F}_{is}\,P)/\mathrm{und}_{\boldsymbol{\alpha}}(\mathrm{F}_{is}\,P)
\end{displaymath}
by the isomorphism theorem and the right hand side is annihilated by $\boldsymbol{\alpha}$. Therefore
the left hand side is annihilated by $\boldsymbol{\alpha}$ and is hence a quotient of
$(\mathrm{F}_{is}\,N_s)/\mathrm{und}_{\boldsymbol{\alpha}}(\mathrm{F}_{is}\,N_s)$ by the
universal property of the latter. The second inclusion follows from Proposition~\ref{prop4}\eqref{prop4.1}
and the last inclusion is obvious.
By Lemma~\ref{lem16}, all inclusions in \eqref{eq22} are, in fact, equalities. 

Consider the solid  commutative diagram 
\begin{equation}\label{eq27}
\xymatrix{\mathrm{und}_{\boldsymbol{\alpha}}(\mathrm{F}_{is}\,N_s) \ar@{->}[rr] \ar@{_{(}->}[d]
&&\mathrm{F}_{is}\,N_s \ar@{_{(}->}[d] \ar@{->>}[rr]&& N_i \ar@{->}[d]\\
\mathrm{und}_{\boldsymbol{\alpha}}(\mathrm{F}_{is}\,P)  \ar@{->}[rr]
&&\mathrm{F}_{is}\,P\ar@{->>}[rr] \ar@{.>}@/_8pt/[u]
&& N_i^{\oplus \dim(Ae_{\sigma^{-1}(s)})}\ar@{.>}@/_8pt/[u]
}
\end{equation}
with exact rows given by Lemma \ref{lem16}. Here the middle vertical arrow is the natural monomorphism, which
induces both other vertical arrows. Since all inclusions in \eqref{eq22} are equalities,
in particular $\mathrm{und}_{\boldsymbol{\alpha}}(\mathrm{F}_{is}\,N_s)=
\mathrm{und}_{\boldsymbol{\alpha}}(\mathrm{F}_{is}\,P)\cap (\mathrm{F}_{is}\,N_s)$, the left square of
\eqref{eq27} is a pullback, implying that the morphism between cokernels on the right is a monomorphism.
The object $N_i$ being  indecomposable, a Loewy length argument shows that the cokernel morphism 
$N_i \hookrightarrow N_i^{\oplus \dim(Ae_{\sigma^{-1}(s)})}$ splits as shown by the right dotted arrow.
By projectivity of $\mathrm{F}_{is}\,P$, this induces the middle dotted arrow in \eqref{eq27}.
Since $\mathrm{F}_{is}\,N_s$ has simple top, this dotted arrow is a splitting of the embedding 
$\mathrm{F}_{is}\,N_s \hookrightarrow \mathrm{F}_{is}\,P$. Therefore, $\mathrm{F}_{is}\,N_s$ is a direct 
summand of a projective object and is hence projective itself. 

Now we claim that $\mathrm{F}_{js}\,N_s$ is projective for any $j$. This follows from the facts that 
all $\mathrm{F}_{ab}$ send projectives to projectives (as each of them is left adjoint to an exact functor),
and all $\mathrm{F}_{js}$ appear as direct summand in $\mathrm{F}\mathrm{F}_{is}$.
Taking this into account, an application of Lemma~\ref{lem10} completes the proof.
\end{proof}

For $s=1,2,\dots,n$, set $\hat{N}_{s}:=N_{s}\oplus N_{s+n}\oplus \cdots\oplus N_{n(k-1)+s}$.
Using Proposition~\ref{prop14}, we have 
\begin{equation}\label{eq1513}
\mathrm{F}\, \hat{N}_{s}\cong \hat{P} \quad\text{ for each }s.
\end{equation}
By construction, $\mathrm{End}_{\ccC}(\mathrm{F})\cong A\otimes_{\Bbbk}A^{\mathrm{op}}$, so we may
identify $A$ with the subalgebra $A\otimes 1$ in $\mathrm{End}_{\ccC}(\mathrm{F})$.
Set $B_s:=\mathrm{End}_{\overline{\mathbf{M}}(\clubsuit)}(\hat{N}_s)$. 

\begin{lemma}\label{lem24}
For any $s,t=1,2,\dots,n$ there is an algebra isomorphism $B_s\cong B_t$.
\end{lemma}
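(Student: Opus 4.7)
The plan is to construct an algebra isomorphism $B_s \cong B_t$ using the functors $\mathrm{Q}_{ij}$ introduced in Lemma~\ref{lem19}. The guiding idea is that $\mathrm{Q}_{st}$ should permute the summands of $\hat{N}_t$ so as to produce $\hat{N}_s$, and that Lemma~\ref{lem19} supplies exactly the structure needed to see this permutation is reversible up to an inner automorphism.

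As a first step, I would extend Lemma~\ref{lem19}(ii) (together with the underlying Lemmas~\ref{lem9} and \ref{lem10}) from the first transitive piece of $\mathbf{M}$ to all $k$ of them. By the choice of ordering fixed in Section~\ref{s3.2}, the equivalences between the simple transitive quotients of the $k$ transitive pieces shift indices by $n$ and commute with the action of $\cC_A$. Repeating the arguments of the cited lemmas piece by piece should yield
\begin{displaymath}
\mathrm{Q}_{ij}\,N_{mn+a} \cong
\begin{cases}
N_{mn+i}, & j=a,\\
0, & \text{otherwise,}
\end{cases}
\end{displaymath}
for all $m \in \{0,\ldots,k-1\}$ and $a,i,j \in \{1,\ldots,n\}$. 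Summing over $m$ will then give $\mathrm{Q}_{st}\,\hat{N}_t \cong \hat{N}_s$ for any $s,t \in \{1,\ldots,n\}$.

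Fixing such an isomorphism, functoriality of $\mathrm{Q}_{st}$ will produce a $\Bbbk$-algebra homomorphism $\varphi\colon B_t \to B_s$, and symmetrically $\mathrm{Q}_{ts}$ will produce $\psi\colon B_s \to B_t$. To check that $\varphi$ and $\psi$ are mutually inverse up to inner automorphisms, I would invoke Lemma~\ref{lem19}(i) to identify $\mathrm{Q}_{ts}\circ \mathrm{Q}_{st}$ with $\mathrm{Q}_{tt}$, and then combine Lemma~\ref{lem19}(iii) with the extended statement above to conclude that $\mathrm{Q}_{tt}$ restricted to $\mathrm{add}(\hat{N}_t)$ is naturally isomorphic to the identity (since only the $i=t$ summand of $Q = \bigoplus_i \mathrm{Q}_{ii}$ acts non-trivially on $\hat{N}_t$). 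Hence $\psi\circ\varphi$ will act as conjugation by an automorphism of $\hat{N}_t$, giving a bijection on $B_t$; similarly $\varphi\circ\psi$ will be bijective on $B_s$, so $\varphi$ is an algebra isomorphism.

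The principal obstacle I anticipate lies in the first step: cleanly justifying that Lemmas~\ref{lem9}, \ref{lem10}, and \ref{lem19}(ii) all transfer verbatim from the first transitive piece to each of the $k$ pieces. This should follow from 2-naturality of the equivalences between the simple transitive quotients, but requires care with the index bookkeeping to ensure that the global functors $\mathrm{F}_{ij}$ and $\mathrm{Q}_{ij}$ really do act in the predicted way on each $N_{mn+a}$.
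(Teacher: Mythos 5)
Your proposal is correct and follows essentially the same route as the paper, whose entire proof is the observation that, by Lemma~\ref{lem19}, the functor $\mathrm{Q}_{st}$ induces an equivalence between $\mathrm{add}(\hat{N}_{t})$ and $\mathrm{add}(\hat{N}_{s})$. Your expansion --- extending Lemma~\ref{lem19}(ii) to all $k$ transitive pieces so that $\mathrm{Q}_{st}\,\hat{N}_t\cong\hat{N}_s$, and exhibiting $\mathrm{Q}_{ts}$ as a quasi-inverse on these subcategories via parts (i) and (iii) --- is precisely the content packed into that one sentence.
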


\begin{proof}
By Lemma~\ref{lem19}, the functor $\mathrm{Q}_{st}$ induces an equivalence between $\mathrm{add}(\hat{N}_{t})$
and $\mathrm{add}(\hat{N}_{s})$, which implies the claim.
\end{proof}

Thanks to Lemma~\ref{lem24}, we can write $B$ for $B_s$. Since $\boldsymbol{\alpha}$
annihilates $\hat{N}_s$, the natural map $\mathrm{End}_{\ccC}(\mathrm{F})\otimes_{\Bbbk}B\to
\mathrm{End}_{\overline{\mathbf{M}}(\clubsuit)}(\hat{P})$ factors through the morphism
\begin{equation}\label{eq29}
A\otimes_{\Bbbk}B\to \mathrm{End}_{\overline{\mathbf{M}}(\clubsuit)}(\hat{P}). 
\end{equation}

\begin{proposition}\label{prop23}
The morphism in \eqref{eq29} is an isomorphism. 
\end{proposition}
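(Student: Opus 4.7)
The plan is to show that both sides of the natural map have the same $\Bbbk$-dimension and that the map is surjective. For the dimension count I would combine $\hat{P}\cong\mathrm{F}\,\hat{N}_s$ from \eqref{eq1513}, the isomorphism $\mathrm{F}\circ\mathrm{F}\cong\mathrm{F}^{\oplus\dim(A)}$, and the self-biadjointness of $\mathrm{F}$ (a consequence of $\mathrm{F}^{*}\cong\mathrm{F}$ as in Section~\ref{s1.8}). Adjunction yields
\begin{displaymath}
\mathrm{End}_{\overline{\mathbf{M}}(\clubsuit)}(\hat{P})\;\cong\;\mathrm{Hom}(\hat{N}_s,\mathrm{F}^{\,2}\,\hat{N}_s)\;\cong\;\mathrm{Hom}(\hat{N}_s,\hat{P})^{\oplus\dim(A)},
\end{displaymath}
and a further application gives $\mathrm{Hom}(\hat{N}_s,\hat{P})\cong\mathrm{Hom}(\hat{P},\hat{N}_s)$. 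Since $\hat{N}_s$ is annihilated by $\boldsymbol{\alpha}$, the latter factors through $\hat{P}/\mathrm{und}_{\boldsymbol{\alpha}}^{1}(\hat{P})\cong\bigoplus_{t=1}^{n}\hat{N}_t$.

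The crux is the claim $\mathrm{Hom}(\hat{N}_t,\hat{N}_s)=0$ for $t\neq s$ in $\{1,\ldots,n\}$. I would establish this by applying the functor $Q=\bigoplus_i\mathrm{Q}_{ii}$ to a hypothetical morphism $\phi:\hat{N}_t\to\hat{N}_s$. By Lemma~\ref{lem19}\eqref{lem19.3}, $Q$ restricts to the identity on $\mathrm{sub}_{\boldsymbol{\alpha}}(\overline{\mathbf{M}}(\clubsuit))$, and an extension of Lemma~\ref{lem19}\eqref{lem19.2} across the block decomposition of Lemma~\ref{lem6} gives $\mathrm{Q}_{ii}\,\hat{N}_s\cong\delta_{is}\hat{N}_s$. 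The image $Q(\phi)$ then decomposes as a direct sum of morphisms $\mathrm{Q}_{ii}\,\hat{N}_t\to\mathrm{Q}_{ii}\,\hat{N}_s$; the only nonzero source occurs at $i=t$ and the only nonzero target at $i=s$, and for $t\neq s$ these are distinct summands, forcing $\phi=Q(\phi)=0$. Substituting this vanishing into the chain above gives $\mathrm{Hom}(\hat{P},\hat{N}_s)\cong B$, hence $\dim\mathrm{End}(\hat{P})=\dim(A)\cdot\dim(B)$.

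For surjectivity I would track $a\otimes b\mapsto(a\otimes 1)_{\hat{N}_s}\circ\mathrm{F}(b)$ through the same sequence of adjunction isomorphisms. Under the identification $\mathrm{F}^{\,2}\cong A\otimes_{\Bbbk}\mathrm{F}$, the action of $a\in A$ corresponds to multiplication in the $A$-tensor factor, and naturality of the unit $\eta:\mathrm{Id}\to\mathrm{F}^{\,2}$ intertwines $\mathrm{F}(b)$ with precomposition by $b\in B$. These together show that the natural map agrees with the identification of $A\otimes B$ with $\mathrm{End}(\hat{P})$ built in the dimension count. The main obstacle is the extension of Lemma~\ref{lem19}\eqref{lem19.2} to indices beyond the first block, needed to evaluate $\mathrm{Q}_{ii}$ on $\hat{N}_s=\bigoplus_j N_{n(j-1)+s}$; this rests on the block-diagonal structure of Lemma~\ref{lem6} and block-level analogues of Lemma~\ref{lem10}, and is where the isotypic hypothesis on $\mathbf{M}$ enters essentially.
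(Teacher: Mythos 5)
Your overall shape (equality of dimensions plus a one-sided non-degeneracy check) is the same as the paper's, but your dimension count goes by a genuinely different route. The paper simply computes $\dim\mathrm{End}(\hat{P})=\dim(\hat{P})=\dim(A)\dim(\hat{N}_s)$ from \eqref{eq5} and then invokes $\dim(\hat{N}_s)=\dim(B)$; your version via the adjunction chain $\mathrm{End}(\mathrm{F}\,\hat{N}_s)\cong\mathrm{Hom}(\hat{N}_s,\hat{P})^{\oplus\dim(A)}\cong\mathrm{Hom}(\hat{P},\hat{N}_s)^{\oplus\dim(A)}$, factoring through $\hat{P}/\mathrm{und}^1_{\boldsymbol{\alpha}}(\hat{P})\cong\bigoplus_t\hat{N}_t$, makes explicit the orthogonality $\mathrm{Hom}(\hat{N}_t,\hat{N}_s)=0$ for $t\neq s$ that the paper's terse step also silently relies on. Your proof of that orthogonality via $Q=\bigoplus_i\mathrm{Q}_{ii}$ and Lemma~\ref{lem19} is correct, and the block-extension of Lemma~\ref{lem19}\eqref{lem19.2} that you flag is legitimate (the paper itself uses it, e.g.\ in Lemma~\ref{lem24} and in applying $\mathrm{Q}_{ts}(\psi)$ to $\hat{N}_s$). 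So up to and including $\dim\mathrm{End}(\hat{P})=\dim(A)\dim(B)$ your argument is sound.

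The gap is in your final step. Given the dimension equality you need either injectivity or surjectivity, and your surjectivity sketch stops exactly where the work begins. Tracking $a\otimes\psi$ through the adjunctions gives, writing $\eta_{\hat{N}_s}=\sum_{v}v\otimes\eta_v$ under $\mathrm{F}^*\mathrm{F}\cong A\otimes_{\Bbbk}\mathrm{F}$, the element $\sum_v (va)\otimes(\eta_v\circ\psi)$; the assertion that this "agrees with the identification built in the dimension count" is precisely the claim that these elements span (equivalently, are linearly independent), and that does not follow formally from naturality of the unit --- it requires a non-degeneracy argument, e.g.\ a triangularity argument with respect to a filtration of $A$ together with the fact that the relevant component of $\eta$ composed with $\psi\neq 0$ remains nonzero. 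This is exactly the content of the paper's injectivity argument: choosing a basis $\{\varphi_i\}$ of $A$ compatible with a composition series of ${}_AA$, one gets the filtration $0=Y_0\subset\dots\subset Y_{\dim(A)}=\hat{P}$, and the image of $\varphi_i\otimes\psi$ modulo $Y_{i-1}$ is the image of $\mathrm{Q}_{ts}(\psi)$, which is nonzero by Lemma~\ref{lem19}. You should either carry out the analogous computation on your side of the adjunction, or (more economically) splice the paper's injectivity argument directly after your dimension count, where it fits without change.
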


\begin{proof}
First we show that the dimensions of both algebras agree. The algebra on the right hand side has dimension
$\dim(\hat{P})$ which is equal to $\dim(A)\dim(\hat{N}_s)$ by the calculation in \eqref{eq9}. Since
$\hat{N}_s$ is a basic progenerator in $B\text{-}\mathrm{mod}$, we have $\dim(\hat{N}_s)=\dim(B)$.

It remains to show that the morphism in \eqref{eq29} is injective. Choose a basis 
$\{\varphi_1,\varphi_2,\dots,\varphi_{\dim(A)}\}$ in $A$ compatible with 
the filtration in \eqref{eq801}. In analogy to \eqref{eq8}, we obtain a filtration
\begin{displaymath}
0=Y_1\subset Y_1\subset \dots \subset Y_{\dim(A)}=\mathrm{F}\, \hat{N}_s=\hat{P},
\end{displaymath}
where $Y_i$ is the image of $(C_i\otimes A^{\mathrm{op}})_{\hat{N}_s}$. For a non-zero $\psi\in B$ consider the 
endomorphism $\varphi_i\otimes \psi$ of $\hat{P}$. The proof of Proposition~\ref{prop12} shows that
the quotient of the image of $\varphi_i\otimes \psi$ modulo $Y_{i-1}$ coincides with the image of 
$Q_{ts}(\psi)$ considered as an endomorphism of $\hat{N}_t$. By Lemma~\ref{lem19}, the latter is non-zero.
This implies the claim.
\end{proof}

\subsection{The proof for $\cC=\cC_A$}\label{s3.4}

\begin{proof}[Proof of Theorem~\ref{thmmain} for $\cC_A$.]
For a fixed $s\in\{1,2,\dots,n\}$ set $\mathcal{B}:=\mathrm{add}(\hat{N}_s)$. Consider the inflation
$\mathbf{P}_{\clubsuit}\boxtimes \mathcal{B}$ and define the $2$-natural transformation
\begin{displaymath}
\Phi: \mathbf{P}_{\clubsuit}\boxtimes \mathcal{B}\to \overline{\mathbf{M}}
\end{displaymath}
in the following way:
\begin{itemize}
\item send $\mathrm{G}\boxtimes X$ to $\mathrm{G}\, X$ for $X\in \mathcal{B}$ and a $1$-morphism
$\mathrm{G}\in\cC_A$;
\item send $\alpha\otimes f:\mathrm{G}_1\boxtimes X_1\to\mathrm{G}_2\boxtimes X_2$ to 
$\mathrm{G}_2(f)\circ\alpha_{X_1}=\alpha_{X_2}\circ \mathrm{G}_1(f)$.
\end{itemize}
It is easy to check that this is a well-defined functor and, moreover, defines a $2$-natural transformation.

For our fixed left cell $\mathcal{L}=\mathcal{L}_1$, consider the $2$-subrepresentation $\mathbf{N}$ of
$\mathbf{P}_{\clubsuit}$ as defined in Section~\ref{s2.1}. Restricting $\Phi$ to $\mathbf{N}\boxtimes
\mathcal{B}$ and using \eqref{eq1513}, we get a $2$-natural transformation from 
$\mathbf{N}\boxtimes \mathcal{B}$ to $\mathbf{G}_{\overline{\mathbf{M}}}(\mathrm{add}(\hat{P}))$.
Since the ideal $\mathbf{I}$ defined in Section~\ref{s2.1} is generated by $\boldsymbol{\alpha}$
(see, for example, the proof of \cite[Lemma~14]{MM5}), the ideal $\mathbf{I}$ 
annihilates $\hat{N}_s$ by construction. Hence we get the induced $2$-natural transformation
\begin{displaymath}
\overline{\Phi}: \mathbf{C}_{\mathcal{L}}\boxtimes \mathcal{B}\to 
\mathbf{G}_{\overline{\mathbf{M}}}(\mathrm{add}(\hat{P})).
\end{displaymath}
Proposition~\ref{prop23} shows that $\overline{\Phi}$ is an equivalence of categories. This completes the proof.
\end{proof}

\section{Proof of Theorem~\ref{thmmain} in the general case}\label{s4}

\subsection{Obvious generalization of Section~\ref{s3}}\label{s4.1}

Let $A$ be as in Section~\ref{s1.8}. For certain central subalgebras $X$ in $A$, in \cite[Section~4.5]{MM3}
we constructed certain fiat $2$-subcategories $\cC_{A,X}$ in $\cC_A$.  The only difference between
$\cC_{A,X}$ and $\cC_A$ is that the endomorphism algebra of the identity $1$-morphism in $\cC_{A,X}$ is $X$.
Similarly one defines $\cC_{A,X}$ in the weakly fiat case.
Since we did not use the endomorphism algebra of the identity $1$-morphism in our arguments in Section~\ref{s3}, 
the claim of Theorem~\ref{thmmain} is true for $2$-representations of $\cC_{A,X}$.

Let $A$ be as in Section~\ref{s1.8}, but not necessarily connected, say
\begin{displaymath}
A=A_1\oplus A_2\oplus \dots \oplus A_m. 
\end{displaymath}
As above, choose some central subalgebras $X_i$ in each $A_i$. With such $A$ and $X=(X_1,X_2,\dots,X_m)$ 
one associates a $2$-category $\cC_{A,X}$ with $m$ objects defined similarly to the connected case, see
\cite[Section~7.3]{MM1} and \cite[Section~4.5]{MM3} for details. The arguments in Section~\ref{s3}
work mutatis mutandis in this more general setup.

\subsection{The proof}\label{s4.2}

\begin{proof}[Proof of Theorem~\ref{thmmain}.]
Let $\cC$ be a weakly fiat $2$-category with a unique maximal two-sided cell  $\mathcal{J}$.
Assume that $\mathcal{J}$ is strongly regular and that $\cC$ is $\mathcal{J}$-simple.
Let $\mathbf{M}$ be an isotypic faithful $2$-representation of $\cC$.

Denote by $\cC_\mathcal{J}$ the $2$-full $2$-subcategory of $\cC$ whose $1$-morphisms are those in 
$\mathcal{J}$ as well as the identity $1$-morphisms on their respective sources and targets. 
Restricting $\mathbf{M}$ to $\cC_\mathcal{J}$ produces a faithful $2$-representation, which we 
denote by $\mathbf{M}_\mathcal{J}$. 

Let $\mathcal{L}$ be a left cell in $\mathcal{J}$. Our assumptions and Theorem~\ref{thm107} imply that
all simple transitive subquotients of $\mathbf{M}$ are equivalent to $\mathbf{C}_{\mathcal{L}}$,
in particular, they are not annihilated by any $1$-morphism in $\mathcal{J}$. By Theorem~\ref{thm106},
the $2$-category $\cC_\mathcal{J}$ has only one (up to equivalence) simple transitive $2$-representation
which is not annihilated by $1$-morphisms in $\mathcal{J}$. This means that $\mathbf{M}_\mathcal{J}$
is an isotypic $2$-representation of $\cC_\mathcal{J}$.

By Theorem~\ref{thm105}, the $2$-category $\cC_\mathcal{J}$ is a $2$-category of 
the form $\cC_{A,X}$ for (not necessarily connected) $A$ and $X$ as in Section~\ref{s4.1}.
Therefore we can construct the object $\hat{N}_s$ as in Section~\ref{s3}. 
Using the principal $2$-representation of $\cC$ instead of the principal $2$-representation of $\cC_\mathcal{J}$,
the proof is now completed by the same argument as in Section~\ref{s3.4}. 
\end{proof}

\section{New examples}\label{s5}

\subsection{Tensor product of finitary $2$-categories}\label{s5.1}

The tensor product of categories as described in Section~\ref{s2.6} can be used to produce
new examples of finitary and fiat $2$-categories. This is motivated by the $2$-category of Soergel bimodules
associated to Coxeter systems with several irreducible components. 

Let $\cA$ and $\cC$ be two finitary $2$-categories. Define the $2$-category $\cA\boxtimes\cC$ whose objects are
pairs $(\mathtt{i},\mathtt{j})$ where $\mathtt{i}$ is an object of
$\cA$ and $\mathtt{j}$ is an object of $\cC$, and whose morphism categories are given by
\begin{displaymath}
(\cA\boxtimes\cC)\big((\mathtt{i},\mathtt{j}),(\mathtt{k},\mathtt{l})\big):=
\cA(\mathtt{i},\mathtt{k})\boxtimes\cC(\mathtt{j},\mathtt{l}).
\end{displaymath}
Composition is defined component-wise and
$\mathbbm{1}_{(\mathtt{i},\mathtt{j})} =\mathbbm{1}_{\mathtt{i}}\boxtimes\mathbbm{1}_{\mathtt{j}}$.

\begin{proposition}\label{prop51}
The $2$-category $\cA\boxtimes\cC$ is finitary, moreover, it is (weakly) fiat if both $\cA$ and $\cC$ are.
\end{proposition}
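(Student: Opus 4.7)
My plan is to verify each of the defining axioms of a finitary (respectively weakly fiat, fiat) $2$-category by reducing them to the corresponding axioms in $\cA$ and $\cC$, using the component-wise nature of the construction.

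First I would show that for any two finitary $\Bbbk$-linear categories $\mathcal{X}$ and $\mathcal{Y}$, the tensor product $\mathcal{X}\boxtimes\mathcal{Y}$ is finitary. Hom spaces are tensor products of finite-dimensional spaces by construction, so they are finite-dimensional. If $\{X_i\}$ and $\{Y_j\}$ are complete irredundant lists of indecomposables in $\mathcal{X}$ and $\mathcal{Y}$, then every object $\bigoplus_k U_k\boxtimes V_k$ of $\mathcal{X}\boxtimes\mathcal{Y}$ is isomorphic to a finite direct sum of objects of the form $X_i\boxtimes Y_j$, since each $U_k$ and each $V_k$ decomposes this way. The key point is that each $X_i\boxtimes Y_j$ is itself indecomposable: its endomorphism algebra is $\mathrm{End}_{\mathcal{X}}(X_i)\otimes_{\Bbbk}\mathrm{End}_{\mathcal{Y}}(Y_j)$, and the tensor product over the algebraically closed field $\Bbbk$ of two finite-dimensional local $\Bbbk$-algebras with residue field $\Bbbk$ is again local. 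This gives a complete list of indecomposables and, since any idempotent in the endomorphism algebra of a direct sum of indecomposables splits the summands accordingly, it establishes the split-idempotent condition.

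Applying this to each $(\cA\boxtimes\cC)((\mathtt{i},\mathtt{j}),(\mathtt{k},\mathtt{l}))=\cA(\mathtt{i},\mathtt{k})\boxtimes\cC(\mathtt{j},\mathtt{l})$ proves that the hom-categories are finitary. The set of objects is finite as a Cartesian product of two finite sets. Horizontal composition is defined component-wise from the horizontal compositions in $\cA$ and $\cC$, hence inherits additivity and $\Bbbk$-linearity. Finally, $\mathbbm{1}_{(\mathtt{i},\mathtt{j})}=\mathbbm{1}_{\mathtt{i}}\boxtimes\mathbbm{1}_{\mathtt{j}}$ is indecomposable by the same endomorphism-algebra argument applied to the two indecomposable identity $1$-morphisms.

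For the (weakly) fiat assertion, I would note first the obvious identification $(\cA\boxtimes\cC)^{\mathrm{op}}\cong\cA^{\mathrm{op}}\boxtimes\cC^{\mathrm{op}}$ and then define a weak equivalence $*:\cA\boxtimes\cC\to(\cA\boxtimes\cC)^{\mathrm{op}}$ by setting $(\mathrm{F}\boxtimes\mathrm{G})^{*}:=\mathrm{F}^{*_{\ccA}}\boxtimes\mathrm{G}^{*_{\ccC}}$ on $1$-morphisms, extending additively, and applying the corresponding involutions component-wise to $2$-morphisms. Given adjunction $2$-morphisms $\alpha_{\ccA},\beta_{\ccA}$ witnessing the adjunction between $\mathrm{F}$ and $\mathrm{F}^{*_{\ccA}}$, and similarly $\alpha_{\ccC},\beta_{\ccC}$ for $\mathrm{G}$, the tensor $2$-morphisms $\alpha_{\ccA}\boxtimes\alpha_{\ccC}$ and $\beta_{\ccA}\boxtimes\beta_{\ccC}$ provide the required adjunction data for $\mathrm{F}\boxtimes\mathrm{G}$, and the snake (zig-zag) equations for the product are obtained as tensor products of the snake equations in $\cA$ and $\cC$. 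If $*_{\ccA}$ and $*_{\ccC}$ are weak involutions, then the induced isomorphism $**\cong\mathrm{id}$ in each factor tensors up to an isomorphism $**\cong\mathrm{id}$ on $\cA\boxtimes\cC$, giving the fiat case.

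The only slightly delicate point, which I consider the main obstacle, is the indecomposability of the identity $1$-morphism (and more generally of $X_i\boxtimes Y_j$) and the accompanying classification of indecomposables and splitting of idempotents in $\mathcal{X}\boxtimes\mathcal{Y}$; this is where the hypothesis that $\Bbbk$ is algebraically closed is essential, via the locality of tensor products of local $\Bbbk$-algebras. Everything else reduces to bookkeeping that is component-wise in the two factors.
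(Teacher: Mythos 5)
Your proposal is correct and follows essentially the same route as the paper: the key observation in both is that the tensor product of two local $\Bbbk$-algebras is local (so tensor products of indecomposables are indecomposable, giving split idempotents and the finiteness conditions), and the (weakly) fiat structure is obtained by defining $*$ component-wise and tensoring the adjunction $2$-morphisms. Your write-up merely spells out more of the bookkeeping that the paper declares to "follow directly from the definition."
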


\begin{proof}
As the tensor product of two local algebras is local,  the tensor product of an indecomposable $1$-morphisms
in $\cA$ and an indecomposable $1$-morphisms in $\cC$ will have a local endomorphism algebra. This implies that
$(\cA\boxtimes\cC)\big((\mathtt{i},\mathtt{j}),(\mathtt{k},\mathtt{l})\big)$ is idempotent split. All
other axioms of finitary $2$-categories follow directly from the definition.

If both $\cA$ and $\cC$ are (weakly) fiat, we can define $*$ on $\cA\boxtimes\cC$ component-wise.
Since all compositions are defined component-wise, adjunction morphisms for $\cA\boxtimes\cC$ are
constructed by tensoring adjunction morphisms for $\cA$ with adjunction morphisms for $\cC$.
\end{proof}

If $\mathcal{L}_1$ is a left cell in $\cA$ and $\mathcal{L}_2$ is a left cell in $\cC$, then
\begin{displaymath}
\mathcal{L}_1\boxtimes \mathcal{L}_2 :=\{\mathrm{F}\boxtimes \mathrm{G}\,:\, 
\mathrm{F}\in \mathcal{L}_1,\mathrm{G}\in \mathcal{L}_2 \} 
\end{displaymath}
is a left cell in $\cA\boxtimes\cC$. The partial order on the set of left cells in $\cA\boxtimes\cC$ is the
direct product of the partial orders on the sets of left cells in $\cA$ and $\cC$. Similar statements hold
for right and two-sided cells. It is worth pointing out that the direct product of strongly regular 
two-sided cells is strongly regular.
Furthermore, if $\mathcal{J}_1$ and $\mathcal{J}_2$ are two-sided cells in $\cA$ respectively  $\cC$  
such that $\cA$ and $\cC$ are $\mathcal{J}_1$- respectively $\mathcal{J}_2$-simple, then 
$\cA\boxtimes\cC$ is $\mathcal{J}_1\boxtimes\mathcal{J}_2$-simple.

As a special case of the above construction, consider two self-injective algebras $A$ and $B$. The
finitary $2$-category $\cC_A\boxtimes\cC_B$ can be realized as an ``extension'' of $\cC_{A\otimes B}$ by
adding the (additive closure of) $1$-morphisms given by $A\otimes B\text{-}A\otimes B$-bimodules
\begin{displaymath}
A\otimes_{\Bbbk}(B\otimes_{\Bbbk}B)\quad\text{ and }\quad
(A\otimes_{\Bbbk}A)\otimes_{\Bbbk}B
\end{displaymath}
and all possible natural transformations between all functors.

\subsection{Isotypic $2$-representations for certain tensor products}\label{s5.15}

Let $\cC$ be as in Theorem~\ref{thmmain}  and $\mathcal{J}$ the unique maximal two-sided cell in $\cC$.
For  a local commutative finite dimensional $\Bbbk$-algebra $B$ denote by $\cY_B$ the finitary $2$-category having
\begin{itemize}
\item one object $\spadesuit$;
\item direct sums of copies of $\mathbbm{1}_{\spadesuit}$ as $1$-morphisms; 
\item $B$ as the endomorphism algebra of $\mathbbm{1}_{\spadesuit}$.
\end{itemize}

Consider the $2$-category $\cC\boxtimes \cY_B$. This $2$-category has  
$\hat{\mathcal{J}}:=\mathcal{J}\boxtimes \mathbbm{1}_{\spadesuit}$ as its unique maximal two-sided cell.
Note that $\cC\boxtimes \cY_B$ is not $\hat{\mathcal{J}}$-simple unless $B\cong\Bbbk$.
Given a finitary $2$-representation $\mathbf{M}$ of $\cC$ and a finitary $2$-representation $\mathbf{N}$ of
$\cY_B$, the tensor product $\mathbf{M}\boxtimes \mathbf{N}$ carries the natural structure of a
$2$-representation of $\cC\boxtimes \cY_B$ defined component-wise.

A finitary $2$-representation $\mathbf{N}_{\mathcal{C},\varphi}$ of $\cY_B$ is given, up to equivalence, 
by the data of a finitary category $\mathcal{C}$ and a fixed homomorphism $\varphi$ from $B$ to the 
center of $\mathcal{C}$.

\begin{proposition}\label{prop112}
Every isotypic $2$-representation of  $\cC\boxtimes \cY_B$ which does not annihilate any
$1$-morphism in $\hat{\mathcal{J}}$ is equivalent to $\mathbf{C}_{\mathcal{L}}\boxtimes 
\mathbf{N}_{\mathcal{C},\varphi}$ for some left cell $\mathcal{L}$ in $\mathcal{J}$ and
appropriate $\mathcal{C}$ and $\varphi$ as above.
\end{proposition}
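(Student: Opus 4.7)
The plan is to restrict $\mathbf{M}$ along the natural $2$-functor $\iota:\cC\hookrightarrow\cC\boxtimes\cY_B$, $\mathrm{F}\mapsto\mathrm{F}\boxtimes\mathbbm{1}_\spadesuit$, apply Theorem~\ref{thmmain} to the resulting $\cC$-representation $\mathbf{M}_\cC$, and then recover $\varphi$ from the residual $B$-action on the inflating category.

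First I would check that $\mathbf{M}_\cC$ is isotypic and faithful. Faithfulness follows from the hypothesis that $\mathbf{M}$ annihilates no $1$-morphism in $\hat{\mathcal{J}}=\mathcal{J}\boxtimes\mathbbm{1}_\spadesuit$ together with the $\mathcal{J}$-simplicity of $\cC$. For isotypicality, note that every indecomposable $1$-morphism of $\cC\boxtimes\cY_B$ has the form $\mathrm{F}\boxtimes\mathbbm{1}_\spadesuit$ with $\mathrm{F}\in\cC$ indecomposable (since $B$ is local, the tensor product $\mathrm{End}_\cC(\mathrm{F})\otimes_\Bbbk B$ is local). Therefore $\cC$-stable subsets of $\mathrm{Ind}(\mathbf{M})$ coincide with $\cC\boxtimes\cY_B$-stable ones, and any $\cC$-stable ideal in $\mathbf{M}$ is automatically preserved under the $B$-action on $\mathbbm{1}_\spadesuit$, which only contributes natural endomorphisms of objects (absorbed by any categorical ideal of morphism spaces). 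Hence the simple transitive subquotients of $\mathbf{M}_\cC$ are precisely the restrictions of those of $\mathbf{M}$, and are pairwise equivalent by hypothesis. Theorem~\ref{thmmain} then produces a left cell $\mathcal{L}\subseteq\mathcal{J}$, a finitary category $\mathcal{C}$, and an equivalence $\Psi:\mathbf{C}_\mathcal{L}^{\boxtimes\mathcal{C}}\xrightarrow{\sim}\mathbf{M}_\cC$ of $\cC$-representations.

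To recover the $\cY_B$-structure, I would realise $\mathcal{C}$ as $\mathrm{add}(\hat{N}_s)\subset\overline{\mathbf{M}}((\mathtt{i}_\mathcal{L},\spadesuit))$ for a suitable distinguished object $\hat{N}_s$, exactly as in Section~\ref{s3.4}. The action of $b\in B=\mathrm{End}_{\cY_B}(\mathbbm{1}_\spadesuit)$ on $\mathbf{M}$ is a $\cC$-equivariant natural endomorphism of $\mathrm{Id}_{\mathbf{M}((\mathtt{i},\spadesuit))}$; evaluating it at $\hat{N}_s$ defines $\varphi(b)\in\mathrm{End}(\hat{N}_s)$. The commutation of the $B$-action with the $\cC$-action, combined with the tensor decomposition $\mathrm{End}(\hat{P})\cong A\otimes_\Bbbk\mathrm{End}(\hat{N}_s)$ provided by Proposition~\ref{prop23}, forces $\varphi(b)$ into the centralizer of the $A\otimes 1$-subalgebra, which equals $1\otimes Z(\mathrm{End}(\hat{N}_s))=Z(\mathcal{C})$. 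This yields the required algebra homomorphism $\varphi:B\to Z(\mathcal{C})$, and extending $\Psi$ componentwise gives an equivalence $\mathbf{C}_\mathcal{L}\boxtimes\mathbf{N}_{\mathcal{C},\varphi}\xrightarrow{\sim}\mathbf{M}$ of $\cC\boxtimes\cY_B$-representations.

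The main obstacle is this final step: disentangling the $\cC$-action from the $B$-action and showing that the latter, once transported through $\Psi$, lies in the $1\otimes Z(\mathcal{C})$-summand inside the tensor decomposition of Proposition~\ref{prop23}. This is essentially a double-centralizer computation within $\mathrm{End}(\hat{P})$, exploiting that the $B$-action commutes with the embedded subalgebra $A\otimes 1$ coming from $\cC$; the subsequent verification that $\Psi$ upgrades to a $\cC\boxtimes\cY_B$-equivariant equivalence is then routine bookkeeping parallel to the closing argument of Section~\ref{s3.4}.
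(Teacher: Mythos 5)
Your overall strategy is the paper's: restrict $\mathbf{M}$ to the canonical copy of $\cC$ inside $\cC\boxtimes\cY_B$, note that this restriction is faithful and isotypic, run the machinery of Theorem~\ref{thmmain} to produce the object $\hat{N}_s$ and the equivalence with an inflation of $\mathbf{C}_{\mathcal{L}}$, and then read off the $\cY_B$-structure on $\mathcal{C}=\mathrm{add}(\hat{N}_s)$. The one place where your reasoning goes wrong is precisely the step you single out as the main obstacle: the centralizer of $A\otimes 1$ in $A\otimes_{\Bbbk}\mathrm{End}(\hat{N}_s)$ is $Z(A)\otimes_{\Bbbk}\mathrm{End}(\hat{N}_s)$, not $1\otimes Z(\mathrm{End}(\hat{N}_s))$, so the double-centralizer computation as you state it does not place $\varphi(b)$ in the center of $\mathcal{C}$. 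Fortunately no such computation is needed. Since $\mathbf{M}$ is a strict $2$-functor, $\mathbbm{1}_{\spadesuit}$ (and hence every $1$-morphism of $\cY_B$) acts as a direct sum of copies of the identity functor, so each $b\in B$ acts as a natural endomorphism of the identity functor of $\mathbf{M}((\mathtt{i},\spadesuit))$; naturality applied to the endomorphisms of $\hat{N}_s$ itself already forces its value at $\hat{N}_s$ to lie in $Z(\mathrm{End}(\hat{N}_s))=Z(\mathcal{C})$. This is how the paper proceeds: it simply restricts $\mathbf{M}$ to $\cY_B$ on $\mathrm{add}(\hat{N}_s)$ to obtain a $2$-representation $\mathbf{N}\simeq\mathbf{N}_{\mathcal{C},\varphi}$, and then uses the evaluation map $\mathrm{F}\boxtimes X\mapsto\mathrm{F}\,X$ from $\mathbf{P}_{\mathtt{i}}\boxtimes\mathbf{N}$ to $\mathbf{M}$, exactly as in Section~\ref{s3.4}, to produce the equivalence with $\mathbf{C}_{\mathcal{L}}\boxtimes\mathbf{N}$. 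With that correction, your argument coincides with the paper's proof.
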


\begin{proof}
Let $\mathbf{M}$ be an isotypic $2$-representation of  $\cC\boxtimes \cY_B$ which does not 
annihilate any $1$-morphism in $\hat{\mathcal{J}}$. Restricting $\mathbf{M}$ to the canonical copy of 
$\cC$ in $\cC\boxtimes \cY_B$ gives a faithful isotypic $2$-representation of $\cC$. Consider 
the category $\mathrm{add}(\hat{N}_s)$. It carries the natural structure of a
$2$-representation of $\cY_B$ by restriction (since any indecomposable $1$-morphism in $\cY_B$ is 
isomorphic to the identity). We denote the resulting $2$-representation of $\cY_B$ by $\mathbf{N}$.
By the above remark, $\mathbf{N}$ is equivalent to $\mathbf{N}_{\mathcal{C},\varphi}$ for some 
$\mathcal{C}$ and $\varphi$.

Let $\mathcal{L}$ be a left cell in  $\mathcal{J}$ and $\mathtt{i}:=\mathtt{i}_{\mathcal{L}}$.
Consider now the $2$-representation $\mathbf{P}_{\mathtt{i}}\boxtimes\mathbf{N}$ of
$\cC\boxtimes \cY_B$. Mapping $\mathrm{F}\boxtimes X$ to $\mathrm{F}\, X$ defines a 
$2$-natural transformation from $\mathbf{P}_{\mathtt{i}}\boxtimes\mathbf{N}$ to
$\mathbf{M}$. Similarly to Section~\ref{s3.4}, this induces an equivalence between 
$\mathbf{C}_{\mathcal{L}}\boxtimes\mathbf{N}$ and $\mathbf{M}$.
\end{proof}

\subsection{Finitary $2$-categories associated with trivial extensions}\label{s5.2}

Let $B$ be a basic connected finite dimensional algebra and let $M$ be a $B\text{-}B$-bimodule in
$\mathrm{add}\{B,B\otimes_{\Bbbk}B\}$. Denote by $A$ the {\em trivial extension} of $B$ by $M$, that is,
the set 
\begin{displaymath}
\left\{ 
\left(\begin{array}{cc}b&m\\0&b\end{array}\right)\,:\,b\in B,m\in M
\right\}
\end{displaymath}
with the obvious (matrix) multiplication. Denote by $\cC(B,M)$ the $2$-category obtained by adding to $\cC_A$
all functors given by tensoring with bimodules in $\mathrm{add}(A\otimes_B A)$ and all natural
transformations between all our functors.

\begin{proposition}\label{prop52}
The $2$-category $\cC(B,M)$ is finitary.
\end{proposition}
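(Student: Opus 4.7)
My plan is to verify the four defining axioms of a finitary $2$-category in turn. The existence of only one object makes the first condition automatic, and $\Bbbk$-linearity and additivity of horizontal composition is automatic since every $1$-morphism has the form $N\otimes_A({}_-)$ for a finite-dimensional $A$-$A$-bimodule $N$. So the two substantive checks are that $\mathbbm{1}_{\clubsuit}$ is indecomposable, and that the unique $1$-hom-category lies in $\mathfrak{A}_{\Bbbk}^f$. The latter reduces to showing that the generating set
\begin{displaymath}
\{A,\ A\otimes_{\Bbbk}A,\ A\otimes_B A\}
\end{displaymath}
is closed, up to isomorphism, under $\otimes_A$ inside $\mathrm{add}$, since Krull-Schmidt for the finite-dimensional algebra $A\otimes_{\Bbbk}A^{\mathrm{op}}$ will then ensure only finitely many isomorphism classes of indecomposable $1$-morphisms appear, and the remaining properties (finite-dimensional homs, split idempotents) are inherited from the ambient abelian category of finite-dimensional $A$-$A$-bimodules.

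For indecomposability of $\mathbbm{1}_{\clubsuit}$, I would show that $A$ is connected as a $\Bbbk$-algebra. Writing a central idempotent as $e=b+m$ with $b\in B$ and $m\in M$, and using that $M\cdot M=0$ in the trivial extension, the equation $e^2=e$ together with centrality of $e$ forces $b$ to be a central idempotent of $B$; connectedness of $B$ then gives $b\in\{0,1\}$, and the resulting relation $bm+mb=m$ forces $m=0$ in either case.

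For closure under $\otimes_A$, the only substantive new cases are $A\otimes_B A\otimes_B A$ and the mixed products $A\otimes_B A\otimes_{\Bbbk}A$ and $A\otimes_{\Bbbk}A\otimes_B A$. For the first I would expand the middle copy of $A$ as $B\oplus M$ at the level of $B$-$B$-bimodules to obtain
\begin{displaymath}
A\otimes_B A\otimes_B A\cong A\otimes_B A\oplus A\otimes_B M\otimes_B A,
\end{displaymath}
and then use the hypothesis $M\in\mathrm{add}\{B,\, B\otimes_{\Bbbk}B\}$ to place the second summand in $\mathrm{add}\{A\otimes_B A,\, A\otimes_{\Bbbk}A\}$. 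For the mixed products, the same hypothesis implies that $M$, and hence $A=B\oplus M$, is projective as a one-sided $B$-module, whence $A\otimes_B A$ is projective as a one-sided $A$-module on both sides; splitting it into indecomposable projectives $Ae_i$ or $e_jA$ and absorbing the factor $\otimes_{\Bbbk}A$ then places these products in $\mathrm{add}(A\otimes_{\Bbbk}A)$.

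The main obstacle is the first computation: one has to resist decomposing the middle $A$ as an $A$-$A$-bimodule, since $A\cong B\oplus M$ is only a splitting at the $B$-$B$ level, yet this is precisely the decomposition one needs to keep the composition inside $\mathrm{add}\{A,\, A\otimes_{\Bbbk}A,\, A\otimes_B A\}$. Once closure under composition is in hand, all remaining finitary axioms follow routinely.
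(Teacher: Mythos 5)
Your proposal is correct and follows essentially the same route as the paper: the substantive point is closure under composition, handled via $A\otimes_BA\otimes_BA\cong (A\otimes_BA)\oplus(A\otimes_BM\otimes_BA)$ using ${}_BA_B\cong B\oplus M$ together with the hypothesis on $M$, and via one-sided projectivity of $A\otimes_BA$ for the mixed products with $A\otimes_\Bbbk A$. The additional verifications you include (connectedness of the trivial extension, Krull--Schmidt for the bimodule category) are exactly the routine checks the paper leaves implicit.
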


\begin{proof}
The only thing we need to check is that $\cC(B,M)$ is closed with respect to composition of $1$-morphisms.
Since ${}_BA_B\cong B\oplus M\in \mathrm{add}\{B,B\otimes_{\Bbbk}B\}$, we have
\begin{displaymath}
(A\otimes_B A)\otimes_A(A\otimes_B A)\cong (A\otimes_B A)\oplus (A\otimes_B M\otimes_B A)
\end{displaymath}
and 
\begin{displaymath}
A\otimes_B M\otimes_B A\in\mathrm{add}\{A\otimes_B A,A\otimes_{\Bbbk} A\} 
\end{displaymath}
by our choice of $M$. 

Next we claim that both $(A\otimes_B A)\otimes_A(A\otimes_{\Bbbk} A)$ and
$(A\otimes_{\Bbbk} A)\otimes_A(A\otimes_B A)$ are projective $A\text{-}A$-bimodules.
We prove the first part and the second part is similar. We have to show that $A\otimes_B A$ is projective
as a left $A$-module. As
\begin{displaymath}
A\otimes_B A\cong A\otimes_B (B\oplus M)\in \mathrm{add}\{A\otimes_B B,A\otimes_B B\otimes_{\Bbbk}B\},
\end{displaymath}
the claim follows and the proof of the proposition is complete.
\end{proof}

This construction generalizes in the obvious way to the case of a not necessarily connected algebra $B$.

\subsection{Fiat $2$-categories associated to iterated trivial extensions of symmetric algebras}\label{s5.3}

\begin{proposition}\label{prop53}
Let $B$ be a connected finite dimensional symmetric algebra and $M=B=B^{*}$.
Then $\cC(B,M)$ is a fiat $2$-category.
\end{proposition}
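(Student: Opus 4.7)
The plan is to upgrade the finitary $2$-category $\cC(B,M)$ to a fiat one by combining the fiat structure on $\cC_A$ with a self-duality of the bimodule $A\otimes_B A$. First I would observe that, with $M=B=B^{*}$ as $B$-$B$-bimodules, the trivial extension $A=B\ltimes M$ admits the presentation $A\cong B\otimes_{\Bbbk}D$ as $\Bbbk$-algebras, where $D=\Bbbk[\epsilon]/(\epsilon^{2})$ and the element $\epsilon$ corresponds to $1\in M=B$ and is central over the image of $B$. Since both $B$ and $D$ are symmetric $\Bbbk$-algebras (the latter via the trace $a+b\epsilon\mapsto b$), their tensor product $A$ is symmetric as well; explicitly, $\tau_{A}(b_{0}+b_{1}\epsilon):=\tau_{B}(b_{1})$ is a non-degenerate symmetric trace on $A$. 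In particular, $A$ is weakly symmetric, its Nakayama bijection is the identity, and so the $2$-subcategory $\cC_{A}\subset\cC(B,M)$ is already fiat in the sense of Section~\ref{s1.8}.

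Next I would exhibit $A$ as a symmetric Frobenius extension of $B$. Under $A\cong B\otimes_{\Bbbk}D$ the inclusion $B\hookrightarrow A$ corresponds to $B\otimes\Bbbk\hookrightarrow B\otimes D$; since the finite-dimensional symmetric algebra $D$ is a symmetric Frobenius extension of $\Bbbk$ and this property is preserved under the base change $B\otimes_{\Bbbk}-$, the extension $A/B$ is symmetric Frobenius. Concretely, the swap map $\phi:A\to\mathrm{Hom}_{B}(A,B)$ sending $b_{0}+b_{1}\epsilon$ to the $B$-linear form $1\mapsto b_{1},\ \epsilon\mapsto b_{0}$ is an isomorphism of $(A,B)$-bimodules, with the $A$-linearity of $\phi$ verified by a direct computation using the centrality of $\epsilon$ over $B$. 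Consequently, the induction and coinduction functors between $B\text{-}\mathrm{mod}$ and $A\text{-}\mathrm{mod}$ coincide, yielding a chain of adjunctions $\mathrm{Ind}^{A}_{B}\dashv\mathrm{Res}^{A}_{B}\dashv\mathrm{Ind}^{A}_{B}$.

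Finally, I would assemble the fiat structure on $\cC(B,M)$. Tensoring with $A\otimes_{B}A$ is precisely $\mathrm{Ind}^{A}_{B}\circ\mathrm{Res}^{A}_{B}$, which by the above chain is biadjoint to itself; translated back into bimodules this says $(A\otimes_{B}A)^{\vee}\cong A\otimes_{B}A$, where $(-)^{\vee}:=\mathrm{Hom}_{\Bbbk}(-,\Bbbk)$. The symmetric structure on $A$ simultaneously shows that $\Bbbk$-duality sends the projective bimodule $Ae_{i}\otimes_{\Bbbk}e_{j}A$ to $Ae_{j}\otimes_{\Bbbk}e_{i}A$, reproducing the standard $*$ on $\cC_{A}$. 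Hence $(-)^{\vee}$ restricts to a weak involution $*:\cC(B,M)\to\cC(B,M)^{\mathrm{op}}$, and the units and counits of the adjunctions $\mathrm{Ind}\dashv\mathrm{Res}\dashv\mathrm{Ind}$ (the counit of the first given by multiplication $A\otimes_{B}A\to A$, and the unit of the second by a Frobenius Casimir element in $A\otimes_{B}A$ obtained from $\phi$ and a $B$-basis of $A$) supply the required adjunction $2$-morphisms; these transfer to the indecomposable summands of $A\otimes_{B}A$ by standard idempotent splitting. The main obstacle will be the verification that $\phi$ is genuinely left-$A$-linear under the twisted action $(a\cdot f)(x)=f(xa)$ on $\mathrm{Hom}_{B}(A,B)$; once this Frobenius-extension isomorphism is established, the remainder of the proof is routine Frobenius bookkeeping.
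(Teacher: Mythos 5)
Your argument is correct and follows essentially the same route as the paper: both reduce the claim to the symmetry of the trivial extension $A$ (which makes $\cC_A$ fiat and lets $\Bbbk$-duality serve as the involution) together with the self-adjointness of $A\otimes_BA$, which you derive from the Frobenius-extension isomorphism $\mathrm{Hom}_{B}(A,B)\cong A$ and which the paper obtains from the equivalent chain $\mathrm{Hom}_{A\text{-}}(A\otimes_BA,A)\cong\mathrm{Hom}_{B\text{-}}(A,B)\otimes_BA\cong \mathrm{Hom}_{\Bbbk}(A,\Bbbk)\otimes_BA\cong A\otimes_BA$. The only divergence is that the paper separately proves that $A\otimes_BA$ is indecomposable (Lemma~\ref{lem55}), whereas you bypass this by transferring the adjunction data to the summands via idempotent splitting, which indeed suffices for fiatness.
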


We will need the following observation.

\begin{lemma}\label{lem55}
The $A\text{-}A$-bimodule $A\otimes_B A$ is indecomposable.
\end{lemma}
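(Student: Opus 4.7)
My plan is to show directly that the $\Bbbk$-algebra $E := \mathrm{End}_{A\text{-}A}(A\otimes_B A)$ is local, which forces $A\otimes_B A$ to be indecomposable.

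Since $M = B$ as a $B$-$B$-bimodule, the element $t := (0, 1_B) \in A$ satisfies $t^2 = 0$ and $bt = tb$ for every $b \in B$. Thus $t$ is central in $A$, one has $A \cong B\otimes_\Bbbk \Bbbk[t]/(t^2)$, and $A = B \oplus Bt$ as a $B$-$B$-bimodule. Tensoring, I obtain
\begin{displaymath}
A\otimes_B A \;=\; B(1\otimes 1) \,\oplus\, B(t\otimes 1) \,\oplus\, B(1\otimes t) \,\oplus\, B(t\otimes t)
\end{displaymath}
as a $B$-$B$-bimodule, whose four generators I shall abbreviate by $1, s, r, sr$ respectively.

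Next, $1\otimes 1$ generates $A\otimes_B A$ as an $A$-$A$-bimodule, and the only relation on $1\otimes 1$ coming from $B$-balancing is $b(1\otimes 1) = (1\otimes 1)b$ for $b\in B$. Hence an endomorphism $f\in E$ is determined by $f(1\otimes 1)$, and writing $f(1\otimes 1) = x_0 + x_1 s + x_2 r + x_3 sr$ with $x_i\in B$, this balancing relation forces each $x_i\in Z(B)$. Thus $E \cong Z(B)^{\oplus 4}$ as a $Z(B)$-module. To identify the algebra structure, I would introduce the endomorphisms $\phi_L, \phi_R \in E$ determined by $\phi_L(1\otimes 1) = s$ and $\phi_R(1\otimes 1) = r$; using $t^2 = 0$ and the centrality of $t$, one checks that $\phi_L^2 = \phi_R^2 = 0$ and $\phi_L\phi_R = \phi_R\phi_L$, yielding the isomorphism $E \cong Z(B)[s, r]/(s^2, r^2)$ of commutative algebras.

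Finally, since $B$ is basic and connected, every idempotent of $Z(B)$ is a central idempotent of $B$ and hence is $0$ or $1$; so $Z(B)$ is local, and therefore $Z(B)[s,r]/(s^2, r^2) \cong Z(B)\otimes_\Bbbk \Bbbk[s,r]/(s^2, r^2)$ is local as well. Consequently $E$ is local and $A\otimes_B A$ is indecomposable. The one step that needs care is bookkeeping the multiplication in $E$, but everything is pinned down by how $t$ acts on the four generators displayed above, so no genuine obstacle arises.
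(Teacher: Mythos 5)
Your proof is correct and follows essentially the same route as the paper: both arguments identify $\mathrm{End}_{A\text{-}A}(A\otimes_B A)$ with $Z(B)[s,r]/(s^2,r^2)$ (the paper realizes this algebra as $\hat{Z}\otimes_{\hat{Z}_1}\hat{Z}$ for the central subalgebra $\hat{Z}=Z(B)\oplus Z(B)t$ of $A$, using adjunction to see the endomorphism ring has dimension $\dim Z(B)^{\oplus 4}$) and then conclude locality from the locality of $Z(B)$ plus the nilpotency of $s,r$. Your direct computation via the image of the generator $1\otimes 1$ is a legitimate substitute for the paper's dimension count.
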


\begin{proof}
Denote by $Z$ the center of $B$. Note that $Z$ is local as $B$ is connected. Using adjunction on both sides, we
see 
\begin{displaymath}
\mathrm{Hom}_{A\text{-}A}(A\otimes_B A,A\otimes_B A)\cong
\mathrm{Hom}_{B\text{-}B}(B,A\otimes_B A)\cong Z^{\oplus 4}
\end{displaymath}
since $A\otimes_B A\cong B^{\oplus 4}$ as a $B\text{-}B$-bimodule and $Z\cong \mathrm{End}_{B\text{-}B}(B)$.

Next, a direct calculation shows that the subalgebra
\begin{displaymath}
\hat{Z}:=
\left\{
\left(\begin{array}{cc}a&b\\0&a\end{array}\right)\,:\, a,b\in Z 
\right\} 
\end{displaymath}
of $A$ is central. We denote by $\hat{Z}_1$ the subalgebra of $\hat{Z}$ consisting of diagonal elements and
by $\hat{Z}_2$ the subalgebra of $\hat{Z}$ consisting of upper triangular elements. Thus
$\hat{Z}=\hat{Z}_1\oplus\hat{Z}_2$ as a direct sum of two subspaces. We also note that $\hat{Z}_1$ is isomorphic 
to $Z$.

The commutative algebra $\tilde{Z}:=\hat{Z}\otimes_{\hat{Z}_1}\hat{Z}$ acts faithfully by endomorphisms of the 
$A\text{-}A$-bimodule $A\otimes_B A$ where the left component acts by multiplication on the left and 
the right component acts by multiplication on the right. This induces an isomorphism
$\hat{Z}\otimes_{\hat{Z}_1}\hat{Z}\cong \mathrm{End}_{A\text{-}A}(A\otimes_B A)$ since both are
isomorphic to $Z^{\oplus 4}$ as $Z$-modules.

Finally, note that both $\hat{Z}_2\otimes_{\hat{Z}_1} \hat{Z}$ and $\hat{Z}\otimes_{\hat{Z}_1} \hat{Z}_2$ 
are nilpotent in 
$\tilde{Z}$ of degree $2$ and thus belong to the radical of $\tilde{Z}$. The quotient
$\tilde{Z}/(\hat{Z}_2\otimes_{\hat{Z}_1} \hat{Z}+\hat{Z}\otimes_{\hat{Z}_1} \hat{Z}_2)$ 
is isomorphic to $Z$ and is thus local.
This implies that $\tilde{Z}$ is local, completing the proof.
\end{proof}

\begin{proof}[Proof of Proposition~\ref{prop53}.]
First we note that the trivial extension $A$ of $B$ by $B^*$ is symmetric, in particular, weakly symmetric,
so the $2$-subcategory $\cC_A$ of $\cC(B,M)$ is fiat. In view of Lemma~\ref{lem55}, it only remains to check that 
$A\otimes_B A$ is self-adjoint. This follows from the chain of isomorphisms
\begin{displaymath}
\begin{array}{rcll}
\mathrm{Hom}_{A\text{-}}(A\otimes_B A,A)
&\cong & \mathrm{Hom}_{B\text{-}}(A,A)&\\ 
&\cong & \mathrm{Hom}_{B\text{-}}(A,B)\otimes_{B}A& \text{(as ${}_BA\cong B\oplus B$)}\\ 
&{\cong} & \mathrm{Hom}_{\Bbbk}(A,\Bbbk)\otimes_{B}A& \text{(as $B^*\cong B$)}\\ 
&{\cong} & A\otimes_{B}A& \text{(as $A^*\cong A$)}.
\end{array}
\end{displaymath}
\end{proof}

Proposition~\ref{prop53} implies that for a symmetric algebra $B$ the construction can be iterated.
Set $A_0:=B$ and for  $k=1,2,\dots$ defined $A_k$ recursively as the trivial extension of $A_{k-1}$
by $A_{k-1}^*\cong A_{k-1}$. This leads to a fiat $2$-category $\cC_{B,k}$ which is obtained by
adding to $\cC_{A_k}$ all functors given by tensoring with bimodules in 
$\mathrm{add}(A_k\otimes_{A_i}A_k)$ for all $i<k$ and all natural
transformations between all our functors. The arguments in the proof of  Proposition~\ref{prop53}
generalize mutatis mutandis to show that $\cC_{B,k}$ is a fiat $2$-category. Clearly,
all two-sided cells in $\cC_{B,k}$ are strongly regular.

Combining this construction with the one proposed in Section~\ref{s5.1}, we obtain numerous
new examples of fiat $2$-categories in which all two-sided cells are strongly regular.

\section{Appendix: weakly fiat potpourri}\label{s7}

In this section, we collect appropriate reformulations of statements from \cite{MM1,MM2,MM3}  
(originally for fiat $2$-categories) in the more general setup of weakly fiat $2$-categories.
Let $\cC$ be a weakly fiat $2$-category as defined in Section~\ref{s1.5} with a weak equivalence $*$.

We write $\mathbf{P}$ for the direct sum of all principal representations of $\cC$.
Isomorphism classes of indecomposable projective and simple objects in $\overline{\mathbf{P}}$ are indexed
by elements in $\mathcal{S}(\cC)$ and denoted by $\hat{P}_{\mathrm{F}}$ and 
$\hat{L}_{\mathrm{F}}$. 

\begin{proposition}\label{prop101}
Let $\mathrm{F},\mathrm{G},\mathrm{H}\in \mathcal{S}(\cC)$.

\begin{enumerate}[$($i$)$]
\item\label{prop101.1} The inequality
$\mathrm{F}\,\hat{L}_{\mathrm{G}}\neq 0$ is equivalent to ${}^*\mathrm{F}\leq_R \mathrm{G}$ and also to 
$\mathrm{F}\leq_L \mathrm{G}^*$.
\item\label{prop101.2} The inequality
$[\mathrm{F}\,\hat{L}_{\mathrm{G}}:\hat{L}_{\mathrm{H}}]\neq 0$ implies $\mathrm{H}\leq_L \mathrm{G}$.
\item\label{prop101.3} If $\mathrm{H}\leq_L \mathrm{G}$, then there is $\mathrm{K}\in \mathcal{S}(\cC)$
such that $[\mathrm{K}\,\hat{L}_{\mathrm{G}}:\hat{L}_{\mathrm{H}}]\neq 0$.
\item\label{prop101.4} If $\hat{L}_{\mathrm{F}}$ occurs in the top or socle of $\mathrm{H}\,\hat{L}_{\mathrm{G}}$,
then $\mathrm{F}$ is in the same left cell as $\mathrm{G}$.
\item\label{prop101.5} If $\mathrm{F}\in\cC(\mathtt{i},\mathtt{j})$, then there is a unique (up to scalar)
non-zero homomorphism $\hat{P}_{\mathbbm{1}_{\mathtt{i}}}\to \mathrm{F}^*\,\hat{L}_{\mathrm{F}}$, in particular, 
$\mathrm{F}^*\,\hat{L}_{\mathrm{F}}\neq 0$.
\end{enumerate}
\end{proposition}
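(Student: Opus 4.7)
The plan is to derive all five parts uniformly from two ingredients available in the weakly fiat setting: the adjunctions $(\mathrm{F}, \mathrm{F}^*)$ and $({}^*\mathrm{F}, \mathrm{F})$, which lift to the abelianization $\overline{\mathbf{P}}$ since both members of each pair remain right exact, and the identity $\mathrm{F}\,\hat{P}_{\mathrm{K}} \cong \hat{P}_{\mathrm{F}\circ\mathrm{K}}$, which decomposes into indecomposable projectives according to the decomposition of $\mathrm{F}\circ\mathrm{K}$ in $\mathcal{S}(\cC)$. The arguments will parallel those of \cite{MM1,MM2} in the fiat case, but care is needed to keep track of whether a given step uses $\mathrm{F}^*$ or ${}^*\mathrm{F}$.

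For \eqref{prop101.1}, I would test nonvanishing of $\mathrm{F}\,\hat{L}_{\mathrm{G}}$ by computing
\begin{displaymath}
\mathrm{Hom}_{\overline{\mathbf{P}}}(\hat{P}_{\mathrm{K}}, \mathrm{F}\,\hat{L}_{\mathrm{G}}) \cong \mathrm{Hom}_{\overline{\mathbf{P}}}({}^*\mathrm{F}\,\hat{P}_{\mathrm{K}}, \hat{L}_{\mathrm{G}})
\end{displaymath}
via the $({}^*\mathrm{F}, \mathrm{F})$-adjunction. The right-hand side is nonzero for some $\mathrm{K}$ precisely when $\mathrm{G}$ appears as a summand of some ${}^*\mathrm{F}\circ\mathrm{K}$, i.e.~when ${}^*\mathrm{F}\leq_R \mathrm{G}$. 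The equivalence with $\mathrm{F}\leq_L \mathrm{G}^*$ follows by applying $*$, using that it interchanges $\leq_L$ with $\leq_R$ and satisfies $({}^*\mathrm{F})^*\cong \mathrm{F}$.

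For \eqref{prop101.2}, a composition factor $\hat{L}_{\mathrm{H}}$ of $\mathrm{F}\,\hat{L}_{\mathrm{G}}$ produces, via projectivity of $\hat{P}_{\mathrm{H}}$, a nonzero map $\hat{P}_{\mathrm{H}}\to \mathrm{F}\,\hat{L}_{\mathrm{G}}$; the same adjunction-plus-decomposition then forces $\mathrm{G}$ to be a summand of ${}^*\mathrm{F}\circ\mathrm{H}$, giving $\mathrm{H}\leq_L\mathrm{G}$. For \eqref{prop101.3}, fix $\mathrm{K}'$ so that $\mathrm{H}$ is a summand of $\mathrm{K}'\circ\mathrm{G}$; a summand-swap under adjunction shows $\mathrm{G}$ is then a summand of ${}^*\mathrm{K}'\circ\mathrm{H}$, which by the same calculation as in \eqref{prop101.1} gives $\mathrm{Hom}(\hat{P}_{\mathrm{H}}, \mathrm{K}'\,\hat{L}_{\mathrm{G}})\neq 0$, so $\mathrm{K}:=\mathrm{K}'$ works. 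Part \eqref{prop101.4} is a two-sided variant of \eqref{prop101.2}: if $\hat{L}_{\mathrm{F}}$ is in the top (resp.~socle) of $\mathrm{H}\,\hat{L}_{\mathrm{G}}$, then \eqref{prop101.2} gives $\mathrm{F}\leq_L\mathrm{G}$ directly, while the adjunction $(\mathrm{H},\mathrm{H}^*)$ (resp.~$({}^*\mathrm{H},\mathrm{H})$) exhibits $\hat{L}_{\mathrm{G}}$ as a subobject (resp.~quotient) of $\mathrm{H}^*\,\hat{L}_{\mathrm{F}}$ (resp.~${}^*\mathrm{H}\,\hat{L}_{\mathrm{F}}$), so a second application of \eqref{prop101.2} yields $\mathrm{G}\leq_L\mathrm{F}$. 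Finally, \eqref{prop101.5} is the direct computation
\begin{displaymath}
\mathrm{Hom}(\hat{P}_{\mathbbm{1}_{\mathtt{i}}}, \mathrm{F}^*\,\hat{L}_{\mathrm{F}}) \cong \mathrm{Hom}(\mathrm{F}\,\hat{P}_{\mathbbm{1}_{\mathtt{i}}}, \hat{L}_{\mathrm{F}}) \cong \mathrm{Hom}(\hat{P}_{\mathrm{F}}, \hat{L}_{\mathrm{F}})\cong \Bbbk,
\end{displaymath}
using the $(\mathrm{F}, \mathrm{F}^*)$-adjunction and the fact that $\hat{P}_{\mathrm{F}}$ has simple top $\hat{L}_{\mathrm{F}}$.

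The main obstacle is the summand-swap used in \eqref{prop101.3}: in a weakly fiat $2$-category, an indecomposable $\mathrm{H}$ appears as a summand of $\mathrm{K}'\circ\mathrm{G}$ if and only if $\mathrm{G}$ appears as a summand of ${}^*\mathrm{K}'\circ\mathrm{H}$. This Krull-Schmidt style claim is proved by applying the unit and counit of the $({}^*\mathrm{K}', \mathrm{K}')$-adjunction to a splitting of $\mathrm{H}$ off from $\mathrm{K}'\circ\mathrm{G}$; it is the one place where the distinction between ${}^*\mathrm{F}$ and $\mathrm{F}^*$ must be tracked carefully, but once isolated it is essentially formal.
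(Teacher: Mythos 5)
Parts (i), (ii), (iv) and (v) of your argument are correct, and they reproduce the adjunction computations of \cite[Lemmata~12--15]{MM1}, which is exactly what the paper's one-line proof (``mutatis mutandis'') points to; the only genuine content in passing to the weakly fiat case is keeping ${}^*\mathrm{F}$ and $\mathrm{F}^*$ apart, which you do correctly there (${}^*\mathrm{F}\dashv\mathrm{F}\dashv\mathrm{F}^*$).

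Part (iii), however, contains a real error. With the paper's convention, $\mathrm{H}\leq_L\mathrm{G}$ unfolds to ``$\mathrm{G}$ is a direct summand of $\mathrm{K}'\circ\mathrm{H}$ for some $\mathrm{K}'$'', not to ``$\mathrm{H}$ is a direct summand of $\mathrm{K}'\circ\mathrm{G}$'' as you assert. Moreover, the ``summand-swap'' lemma you invoke to repair this is false, and it is not ``essentially formal'': the unit/counit of the adjunctions only produce maps $\mathrm{G}\to(\mathrm{K}')^*\circ\mathrm{H}$ and ${}^*\mathrm{K}'\circ\mathrm{H}\to\mathrm{G}$, neither of which need be split. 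Concretely, if the swap held, then combined with part (ii) it would force $\mathrm{G}$ and $\mathrm{H}$ into the same left cell whenever $\mathrm{H}$ is a summand of $\mathrm{K}'\circ\mathrm{G}$; but in $\cC_A$ the $1$-morphism $\mathrm{F}_{11}$ is a summand of $\mathrm{F}_{11}\circ\mathbbm{1}_{\clubsuit}$, while $\mathbbm{1}_{\clubsuit}$ is not a summand of ${}^*\mathrm{F}_{11}\circ\mathrm{F}_{11}$, which is a direct sum of projective bimodules. The good news is that the correct unfolding of the hypothesis makes the swap unnecessary and shortens your proof: given $\mathrm{G}$ as a summand of $\mathrm{K}'\circ\mathrm{H}$, set $\mathrm{K}:=(\mathrm{K}')^*$, so that ${}^*\mathrm{K}\cong\mathrm{K}'$; then your computation from part (i) gives
\begin{displaymath}
\mathrm{Hom}_{\overline{\mathbf{P}}}(\hat{P}_{\mathrm{H}},\mathrm{K}\,\hat{L}_{\mathrm{G}})\cong
\mathrm{Hom}_{\overline{\mathbf{P}}}(\hat{P}_{{}^*\mathrm{K}\circ\mathrm{H}},\hat{L}_{\mathrm{G}})\neq 0,
\end{displaymath}
which is exactly $[\mathrm{K}\,\hat{L}_{\mathrm{G}}:\hat{L}_{\mathrm{H}}]\neq 0$.
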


\begin{proof}
Mutatis mutandis \cite[Lemmata~12-15]{MM1}. 
\end{proof}

\begin{proposition}\label{prop102}
Let $\mathcal{L}$ be a left cell of $\cC$ and $\mathtt{i}=\mathtt{i}_{\mathcal{L}}$.

\begin{enumerate}[$($i$)$]
\item\label{prop102.1}
There is a unique submodule $K$ of  $\hat{P}_{\mathbbm{1}_{\mathtt{i}}}$ such that
\begin{enumerate}[$($a$)$]
\item\label{prop102.1.1} every simple subquotient of 
$\hat{P}_{\mathbbm{1}_{\mathtt{i}}}/K$ is annihilated by any
$\mathrm{F}\in\mathcal{L}$;
\item\label{prop102.1.2} the module $K$ has simple top 
$\hat{L}_{\mathrm{G}_{\mathcal{L}}}$ for some 
$\mathrm{G}_{\mathcal{L}}\in\mathcal{S}(\mathcal{C})$ and
$\mathrm{F}\, \hat{L}_{\mathrm{G}_{\mathcal{L}}}\neq 0$
for any $\mathrm{F}\in\mathcal{L}$.
\end{enumerate}
\item\label{prop102.2} For any $\mathrm{F}\in\mathcal{L}$
the module $\mathrm{F}\, \hat{L}_{\mathrm{G}_{\mathcal{L}}}$
has simple top $\hat{L}_{\mathrm{F}}$.
\item\label{prop102.3} We have $\mathrm{G}_{\mathcal{L}}\in\mathcal{L}$.
\item\label{prop102.4} For any $\mathrm{F}\in\mathcal{L}$
we have ${}^*\mathrm{F}\leq_R \mathrm{G}_{\mathcal{L}}$
and $\mathrm{F}\leq_L \mathrm{G}_{\mathcal{L}}^*$.
\item\label{prop102.5} We have $\mathrm{G}^*_{\mathcal{L}}\in 
\mathcal{L}$.
\end{enumerate}
\end{proposition}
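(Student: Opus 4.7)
The plan is to follow the strategy of \cite[Lemma~17]{MM1} in the more delicate weakly fiat setting, where $*$ is no longer involutive and one must distinguish $\mathrm{F}^*$ from ${}^*\mathrm{F}$. The key technical input is that weak fiatness still provides $\mathrm{F}$ with both a left adjoint ${}^*\mathrm{F}$ and a right adjoint $\mathrm{F}^*$, so each $\mathrm{F}$ acts exactly, not merely right exactly, on $\overline{\mathbf{P}}$.

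For part (i), I would define $K$ as the minimal submodule of $\hat{P}_{\mathbbm{1}_{\mathtt{i}}}$ such that every composition factor of $\hat{P}_{\mathbbm{1}_{\mathtt{i}}}/K$ is annihilated by every $\mathrm{F}\in\mathcal{L}$. Modules with this property form a Serre subcategory, and if $K_1,K_2$ both satisfy (a), then $\hat{P}_{\mathbbm{1}_{\mathtt{i}}}/(K_1\cap K_2)\hookrightarrow \hat{P}_{\mathbbm{1}_{\mathtt{i}}}/K_1\oplus \hat{P}_{\mathbbm{1}_{\mathtt{i}}}/K_2$, so $K_1\cap K_2$ also satisfies (a); uniqueness of $K$ follows. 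To identify $\mathrm{G}_{\mathcal{L}}$ and establish (i)(b), fix any $\mathrm{F}\in\mathcal{L}$; exactness of $\mathrm{F}$ combined with the defining property of $K$ forces $\mathrm{F}(\hat{P}_{\mathbbm{1}_{\mathtt{i}}}/K)=0$, hence $\mathrm{F}\,K = \mathrm{F}\,\hat{P}_{\mathbbm{1}_{\mathtt{i}}}\cong \hat{P}_{\mathrm{F}}$, and by adjunction
\[
\mathrm{Hom}(K,\mathrm{F}^*\,\hat{L}_{\mathrm{F}}) \cong \mathrm{Hom}(\mathrm{F}\,K,\hat{L}_{\mathrm{F}}) \cong \mathrm{Hom}(\hat{P}_{\mathrm{F}},\hat{L}_{\mathrm{F}}) \cong \Bbbk.
\]
Combining this with Proposition~\ref{prop101}\eqref{prop101.4}, which forces each simple in the top of $\mathrm{F}^*\,\hat{L}_{\mathrm{F}}$ to lie in the left cell of $\mathrm{F}$, I would argue that the top of $K$ is a single simple $\hat{L}_{\mathrm{G}_{\mathcal{L}}}$ with $\mathrm{G}_{\mathcal{L}}$ in the same left cell as every $\mathrm{F}\in\mathcal{L}$; this yields both (i)(b) and part (iii), while the required nonvanishing $\mathrm{F}\,\hat{L}_{\mathrm{G}_{\mathcal{L}}}\neq 0$ is built into the construction.

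For (ii), I would apply $\mathrm{F}$ to the surjection $K\tto \hat{L}_{\mathrm{G}_{\mathcal{L}}}$ obtained above: exactness then gives $\hat{P}_{\mathrm{F}}\cong \mathrm{F}\,K \tto \mathrm{F}\,\hat{L}_{\mathrm{G}_{\mathcal{L}}}$, so $\mathrm{F}\,\hat{L}_{\mathrm{G}_{\mathcal{L}}}$ inherits the simple top $\hat{L}_{\mathrm{F}}$ of $\hat{P}_{\mathrm{F}}$. Part (iv) is immediate from $\mathrm{F}\,\hat{L}_{\mathrm{G}_{\mathcal{L}}}\neq 0$ via Proposition~\ref{prop101}\eqref{prop101.1}. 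For (v), taking $\mathrm{F}=\mathrm{G}_{\mathcal{L}}$ in (ii) (using (iii)), and applying Proposition~\ref{prop101}\eqref{prop101.4} to the appearance of $\hat{L}_{\mathrm{G}_{\mathcal{L}}}$ in suitable tops, together with a symmetric analysis via (iv) and Proposition~\ref{prop101}\eqref{prop101.1}, should place $\mathrm{G}_{\mathcal{L}}^*$ in the left cell $\mathcal{L}$. The main obstacle will be deducing that the top of $K$ is actually simple: the hom computation above bounds the multiplicity of a given composition factor of $\mathrm{F}^*\,\hat{L}_{\mathrm{F}}$, but does not immediately restrict the number of distinct simples in the top of $K$. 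Overcoming this requires carefully combining the rigidity of adjunction with Proposition~\ref{prop101}\eqref{prop101.4} and leveraging the independence of the construction from the choice of $\mathrm{F}\in\mathcal{L}$ to eliminate extraneous candidates.
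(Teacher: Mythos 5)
Your overall strategy is the right one and matches the intended argument (the paper's proof is a one-line reference to \cite[Proposition~17]{MM1}): take $K$ minimal among submodules of $\hat{P}_{\mathbbm{1}_{\mathtt{i}}}$ whose quotient has all composition factors annihilated by $\mathcal{L}$, and exploit exactness of the action together with the adjunction $(\mathrm{F},\mathrm{F}^*)$. However, you have left the central step open and say so yourself: nothing in your write-up establishes that the top of $K$ is simple, and the route you sketch (one-dimensionality of $\mathrm{Hom}(K,\mathrm{F}^*\,\hat{L}_{\mathrm{F}})$ plus Proposition~\ref{prop101}\eqref{prop101.4}) does not close the gap, since a constituent $\hat{L}_{\mathrm{H}}$ of the top of $K$ contributes to that Hom-space only if $\hat{L}_{\mathrm{F}}$ occurs in the top of $\mathrm{F}\,\hat{L}_{\mathrm{H}}$, which you have not shown. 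The missing argument is a two-step one. First, no constituent of the top of $K$ is annihilated by $\mathcal{L}$: if $K/N\cong\hat{L}_{\mathrm{H}}$ were annihilated, then $\hat{P}_{\mathbbm{1}_{\mathtt{i}}}/N$ would be an extension of $\hat{P}_{\mathbbm{1}_{\mathtt{i}}}/K$ by $\hat{L}_{\mathrm{H}}$, hence would again satisfy (a), contradicting minimality of $K$ (note that by Proposition~\ref{prop101}\eqref{prop101.1} annihilation of a fixed simple is a property of the whole left cell, not of the individual $\mathrm{F}$). Second, fix $\mathrm{F}\in\mathcal{L}$ and apply $\mathrm{F}$ to the surjection $K\tto \mathrm{top}(K)$: the source becomes $\hat{P}_{\mathrm{F}}$, which has simple top, while the target becomes a direct sum of as many nonzero modules as $\mathrm{top}(K)$ has simple summands; a nonzero quotient of a module with simple top is indecomposable, so $\mathrm{top}(K)$ is simple. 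This yields \eqref{prop102.1.2} and \eqref{prop102.2} simultaneously.

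Three smaller points. For \eqref{prop102.3}, apply Proposition~\ref{prop101}\eqref{prop101.4} to the top $\hat{L}_{\mathrm{F}}$ of $\mathrm{F}\,\hat{L}_{\mathrm{G}_{\mathcal{L}}}$, not to the top of $\mathrm{F}^*\,\hat{L}_{\mathrm{F}}$ as you propose: the image of the nonzero map $K\to\mathrm{F}^*\,\hat{L}_{\mathrm{F}}$ is only a submodule of $\mathrm{F}^*\,\hat{L}_{\mathrm{F}}$, so its top need not appear in the top of $\mathrm{F}^*\,\hat{L}_{\mathrm{F}}$. For the uniqueness in \eqref{prop102.1}, your intersection argument produces the unique minimal submodule satisfying (a); you still need to observe that any $K'$ satisfying (a) \emph{and} (b) coincides with it (if $K\subsetneq K'$, the nonzero quotient $K'/K$ of $K'$ has top $\hat{L}_{\mathrm{G}_{\mathcal{L}}}$, which is not annihilated, yet embeds into $\hat{P}_{\mathbbm{1}_{\mathtt{i}}}/K$, all of whose factors are annihilated). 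Finally, \eqref{prop102.5} is not proved: part \eqref{prop102.4} only gives $\mathrm{G}_{\mathcal{L}}\leq_L\mathrm{G}_{\mathcal{L}}^*$, and the reverse inequality requires a genuine additional argument which ``a symmetric analysis should place $\mathrm{G}_{\mathcal{L}}^*$ in $\mathcal{L}$'' does not supply.
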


\begin{proof}
Mutatis mutandis \cite[Proposition~17]{MM1}. 
\end{proof}

The element $\mathrm{G}_{\mathcal{L}}\in\mathcal{L}$ is called the {\em Duflo involution} in $\mathcal{L}$.

\begin{proposition}\label{prop103}
Let $\mathcal{J}$ be a strongly regular two-sided cell in $\cC$ and $\mathcal{L}$ a left cell in $\mathcal{J}$.
Then for $\mathrm{G}\in \mathcal{L}$ the following assertions are equivalent.
\begin{enumerate}[$($a$)$]
\item\label{prop103.1} $\mathrm{G}=\mathrm{G}_{\mathcal{L}}$. 
\item\label{prop103.2} $\mathrm{G}^*\in\mathcal{L}$. 
\item\label{prop103.3} $\{\mathrm{G}\}=\mathcal{L}\cap {}^*\mathcal{L}$. 
\item\label{prop103.4} $\mathrm{G}={}^*\mathrm{H}$ where $\{\mathrm{H}\}=\mathcal{L}\cap \mathcal{L}^*$. 
\item\label{prop103.5} $\mathrm{G}\,\hat{L}_{\mathrm{G}}\neq 0$. 
\end{enumerate}
\end{proposition}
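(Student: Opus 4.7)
The plan is to reduce everything to the following two singleton identifications obtained from strong regularity:
\begin{displaymath}
\mathcal{L}\cap{}^*\mathcal{L}=\{\mathrm{G}_{\mathcal{L}}\}\qquad\text{and}\qquad\mathcal{L}\cap\mathcal{L}^*=\{\mathrm{G}_{\mathcal{L}}^*\}.
\end{displaymath}
Once these are in hand, each of the conditions $(b),(c),(d),(e)$ can be rephrased directly as membership in one of these intersections (or an equivalent cell-theoretic condition), and is hence immediately seen to be equivalent to $(a)$.

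First I would establish the two singleton identifications. Because both $*$ and ${}^*$ induce isomorphisms between $\leq_L$ and $\leq_R$, the sets ${}^*\mathcal{L}$ and $\mathcal{L}^*$ are right cells contained in $\mathcal{J}$. Strong regularity of $\mathcal{J}$ then guarantees that the intersection of a left and a right cell inside $\mathcal{J}$ contains at most one element. By Proposition~\ref{prop102}\eqref{prop102.3} and~\eqref{prop102.5}, both $\mathrm{G}_{\mathcal{L}}$ and $\mathrm{G}_{\mathcal{L}}^*$ lie in $\mathcal{L}$; using the equivalence $\mathrm{F}^*\in\mathcal{L}\Leftrightarrow \mathrm{F}\in{}^*\mathcal{L}$ (which follows from the weak-inverse relation ${}^*(\mathrm{F}^*)\cong \mathrm{F}$), we obtain $\mathrm{G}_{\mathcal{L}}\in\mathcal{L}\cap{}^*\mathcal{L}$ and $\mathrm{G}_{\mathcal{L}}^*\in\mathcal{L}\cap\mathcal{L}^*$, pinning down both intersections.

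Now $(a)\Rightarrow(b)$ is a restatement of Proposition~\ref{prop102}\eqref{prop102.5}. The conditions $(b)$ and ``$\mathrm{G}\in\mathcal{L}\cap{}^*\mathcal{L}$'' are equivalent by the same weak-inverse remark, and the right-hand side equals $\{\mathrm{G}_{\mathcal{L}}\}$; hence $(b)$ is equivalent to $(a)$, while $(c)$ is literally the assertion that $\mathrm{G}$ is the unique element of $\mathcal{L}\cap{}^*\mathcal{L}$ and is therefore also equivalent to $(a)$. For $(d)$, the unique element of $\mathcal{L}\cap\mathcal{L}^*$ is $\mathrm{H}=\mathrm{G}_{\mathcal{L}}^*$, so the condition $\mathrm{G}={}^*\mathrm{H}$ unwinds to $\mathrm{G}={}^*(\mathrm{G}_{\mathcal{L}}^*)=\mathrm{G}_{\mathcal{L}}$.

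Finally I would deduce $(c)\Leftrightarrow(e)$ from Proposition~\ref{prop101}\eqref{prop101.1}, according to which $\mathrm{G}\,\hat{L}_{\mathrm{G}}\neq 0$ is equivalent to ${}^*\mathrm{G}\leq_R \mathrm{G}$. Since both ${}^*\mathrm{G}$ and $\mathrm{G}$ lie in $\mathcal{J}$ and right cells inside $\mathcal{J}$ are pairwise incomparable by strong regularity, this is in turn equivalent to ${}^*\mathrm{G}\sim_R\mathrm{G}$. Combined with the automatic membership ${}^*\mathrm{G}\in{}^*\mathcal{L}$, this says exactly that the right cell of $\mathrm{G}$ is ${}^*\mathcal{L}$, i.e.\ $\mathrm{G}\in\mathcal{L}\cap{}^*\mathcal{L}$, which is $(c)$. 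The main obstacle is a bookkeeping one: in the weakly fiat setting, $*$ and ${}^*$ are genuinely distinct, so one must carefully track which of them is being used to swap left and right cells at each step, and which gives the relevant singleton intersection; beyond this, everything reduces to Propositions~\ref{prop101}-\ref{prop102} and the incomparability provided by strong regularity.
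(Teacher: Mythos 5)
Your proposal is correct and follows essentially the same route as the paper's proof: both hinge on Proposition~\ref{prop102}\eqref{prop102.3},\eqref{prop102.5} together with the strong-regularity fact that a left cell meets a right cell of $\mathcal{J}$ in a single element, and both derive the equivalence with \eqref{prop103.5} from Proposition~\ref{prop101}\eqref{prop101.1} plus incomparability of cells. The only difference is organizational (you identify the two singleton intersections up front and read off $(a)$--$(d)$, whereas the paper runs a cycle of implications), which is immaterial.
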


\begin{proof}
We have
\begin{itemize}
\item \eqref{prop103.1}$\Rightarrow$\eqref{prop103.2} by Proposition~\ref{prop102}\eqref{prop102.5};
\item \eqref{prop103.2}$\Rightarrow$\eqref{prop103.4} as $|\mathcal{L}\cap \mathcal{L}^*|=1$;
\item \eqref{prop103.4}$\Rightarrow$\eqref{prop103.3} applying $\mathrm{F}\mapsto {}^*\mathrm{F}$;
\item \eqref{prop103.3} $\Rightarrow$\eqref{prop103.2} applying  $\mathrm{F}\mapsto \mathrm{F}^*$;
\item \eqref{prop103.2}$\Rightarrow$\eqref{prop103.1} again as $|\mathcal{L}\cap \mathcal{L}^*|=1$.
\end{itemize}
Finally, \eqref{prop103.2}$\Leftrightarrow$\eqref{prop103.5} follows from strong regularity of 
$\mathcal{J}$ and Proposition~\ref{prop101}\eqref{prop101.1}.
\end{proof}

Let $\mathcal{J}$ be a strongly regular two-sided cell in $\cC$ and $\mathcal{L}$ a left cell in $\mathcal{J}$.
To simplify computational expressions, we assume that $\mathcal{J}$ is a maximal two-sided cell.
In the cell $2$-representation $\mathbf{C}_{\mathcal{L}}$ as defined in Section~\ref{s2.1}, we have indecomposable
projective objects $P_{\mathrm{F}}$, indecomposable injective  objects $I_{\mathrm{F}}$ and  simple 
objects $L_{\mathrm{F}}$, where $\mathrm{F}\in \mathcal{L}$. Set $\mathrm{G}=\mathrm{G}_{\mathcal{L}}$.

\begin{lemma}\label{lem10305}
For any $\mathrm{F}\in \mathcal{L}$ we have $\mathrm{F}\mathrm{G}\cong \mathbf{m}_{\mathrm{G}}\mathrm{F}$.
\end{lemma}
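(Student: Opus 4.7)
My plan is to first show that every indecomposable direct summand of $\mathrm{F}\mathrm{G}$ is isomorphic to $\mathrm{F}$, and then to identify the multiplicity with $\mathbf{m}_\mathrm{G}$ by computing inside the cell $2$-representation via the $({}^*\mathrm{G},\mathrm{G})$-adjunction.

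For the first step, let $\mathrm{H}$ be an indecomposable summand of $\mathrm{F}\mathrm{G}$. Since $\mathrm{H}$ appears in $\mathrm{F}\circ\mathrm{G}$, we have $\mathrm{H}\geq_L\mathrm{G}$ and $\mathrm{H}\geq_R\mathrm{F}$. Maximality of $\mathcal{J}$ forces $\mathrm{H}\in\mathcal{J}$, and the mutual incomparability of left (resp.\ right) cells inside a strongly regular $\mathcal{J}$ then places $\mathrm{H}$ in the left cell of $\mathrm{G}$, namely $\mathcal{L}$, and in the right cell of $\mathrm{F}$. Strong regularity gives $\mathcal{L}\cap\mathcal{R}(\mathrm{F})=\{\mathrm{F}\}$, so $\mathrm{H}\cong\mathrm{F}$, hence $\mathrm{F}\mathrm{G}\cong\mathrm{F}^{\oplus m}$ for some $m\geq 0$. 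The same argument applied to ${}^*\mathrm{G}\circ\mathrm{G}$ shows that every summand lies in $\mathcal{J}$, so in the decomposition ${}^*\mathrm{G}\mathrm{G}\cong\mathrm{H}^{\oplus\mathbf{m}_\mathrm{G}}\oplus\mathrm{K}$ of Section~\ref{s2.2} the component $\mathrm{K}$ is zero, and $\mathrm{H}\in\mathcal{L}$.

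Next I compute $m$ by acting on $L_\mathrm{G}\in\overline{\mathbf{C}}_\mathcal{L}$. Using $\mathrm{G}\,L_\mathrm{G}\cong P_\mathrm{G}$ from \eqref{eq1}, I obtain $\mathrm{F}\mathrm{G}\,L_\mathrm{G}\cong \mathrm{F}\,P_\mathrm{G}$, and on the other hand $\mathrm{F}^{\oplus m}\,L_\mathrm{G}\cong P_\mathrm{F}^{\oplus m}$. Taking a composition series of $P_\mathrm{G}$ and applying \eqref{eq1} term by term, using $\mathrm{F}\,L_{\mathrm{H}'}=\delta_{\mathrm{H}',\mathrm{G}}\,P_\mathrm{F}$, yields $[\mathrm{F}\,P_\mathrm{G}]=[P_\mathrm{G}:L_\mathrm{G}]\,[P_\mathrm{F}]$ in the Grothendieck group of $\overline{\mathbf{C}}_\mathcal{L}$, and comparing multiplicities of $[P_\mathrm{F}]$ gives $m=[P_\mathrm{G}:L_\mathrm{G}]$.

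Finally I match $[P_\mathrm{G}:L_\mathrm{G}]$ with $\mathbf{m}_\mathrm{G}$ using the adjunction $({}^*\mathrm{G},\mathrm{G})$:
\[
[P_\mathrm{G}:L_\mathrm{G}]=\dim\mathrm{End}(P_\mathrm{G})=\dim\mathrm{Hom}({}^*\mathrm{G}\mathrm{G}\,L_\mathrm{G},L_\mathrm{G})=\mathbf{m}_\mathrm{G}\,\dim\mathrm{Hom}(P_\mathrm{H},L_\mathrm{G}),
\]
where the last equality uses ${}^*\mathrm{G}\mathrm{G}\cong \mathrm{H}^{\oplus \mathbf{m}_\mathrm{G}}$ from the first paragraph together with $\mathrm{H}\,L_\mathrm{G}\cong P_\mathrm{H}$ from \eqref{eq1}. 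Since $[P_\mathrm{G}:L_\mathrm{G}]\geq 1$ and $\dim\mathrm{Hom}(P_\mathrm{H},L_\mathrm{G})=\delta_{\mathrm{H},\mathrm{G}}$, this simultaneously forces $\mathrm{H}\cong\mathrm{G}$ and $[P_\mathrm{G}:L_\mathrm{G}]=\mathbf{m}_\mathrm{G}$, yielding $m=\mathbf{m}_\mathrm{G}$ as desired. The delicate part is the first paragraph: it is precisely the maximality of $\mathcal{J}$ together with strong regularity that makes $\mathrm{K}$ vanish and pins down each summand uniquely, and without this the adjunction count in the final step would involve uncontrolled contributions from summands of ${}^*\mathrm{G}\mathrm{G}$ lying outside $\mathcal{J}$.
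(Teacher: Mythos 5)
Your proof is correct: the cell analysis in your first paragraph, the evaluation at $L_{\mathrm{G}}$, and the adjunction count are all valid, and you correctly flag that maximality of $\mathcal{J}$ (which the paper assumes just before this lemma) is what kills the non-$\mathcal{J}$ part $\mathrm{K}$ of ${}^*\mathrm{G}\mathrm{G}$. The route differs from the paper's in one organizational respect: the paper first proves $\mathrm{G}\mathrm{G}\cong m\mathrm{G}$ with $m=[P_{\mathrm{G}}:L_{\mathrm{G}}]=\dim\mathrm{End}(P_{\mathrm{G}})=\mathbf{m}_{\mathrm{G}}$, and then transfers this multiplicity to $\mathrm{F}\mathrm{G}\cong k\mathrm{F}$ by computing $\mathrm{F}\mathrm{G}\mathrm{G}$ two ways via associativity (getting $mk=k^2$ and dividing by $k>0$), whereas you bypass the associativity trick entirely by running the character argument directly on $\mathrm{F}\mathrm{G}\,L_{\mathrm{G}}$ to get $m=[P_{\mathrm{G}}:L_{\mathrm{G}}]$ and then identifying this with $\mathbf{m}_{\mathrm{G}}$ by the same $\dim\mathrm{End}(P_{\mathrm{G}})$ adjunction. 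Your version is slightly longer but more self-contained: it makes explicit both why every summand of $\mathrm{F}\mathrm{G}$ is isomorphic to $\mathrm{F}$ (the paper compresses this to ``strong regularity yields $\mathrm{F}\mathrm{G}\cong k\mathrm{F}$'') and why the adjunction computation $\dim\mathrm{Hom}({}^*\mathrm{G}\mathrm{G}\,L_{\mathrm{G}},L_{\mathrm{G}})=\mathbf{m}_{\mathrm{G}}$ has no spurious contributions, even recovering $\mathrm{H}\cong\mathrm{G}$ as a byproduct rather than assuming it; the paper's associativity step, by contrast, buys a quicker reduction of the $\mathrm{F}\mathrm{G}$ case to the $\mathrm{G}\mathrm{G}$ case once the latter is settled.
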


\begin{proof}
Strong regularity of $\mathcal{J}$ implies that $\mathrm{G}\mathrm{G}=m\mathrm{G}$ 
for some non-negative integer $m$. Applying this to $L_{\mathrm{G}}$,
using exactness of $\mathrm{G}$ and a character argument, we see that 
\begin{displaymath}
m=[P_{\mathrm{G}}:L_{\mathrm{G}}]=\dim\mathrm{End}(P_{\mathrm{G}}). 
\end{displaymath}
At the same time, $\dim\mathrm{End}(P_{\mathrm{G}})=
\dim\mathrm{Hom}(\mathrm{G}\,L_{\mathrm{G}},\mathrm{G}\,L_{\mathrm{G}})$ 
equals $\mathbf{m}_{\mathrm{G}}$ by adjunction.
Thus $m=\mathbf{m}_{\mathrm{G}}$.

Strong regularity of $\mathcal{J}$ again yields $\mathrm{F}\mathrm{G}\cong k\mathrm{F}$ for some non-negative
integer $k$. Moreover, $k$ is, in fact, positive, since $\mathrm{F}\mathrm{G}\, L_{\mathrm{G}}\neq 0$.
Now, computing $\mathrm{F}\mathrm{G}\mathrm{G}$ in two different ways using associativity, and then dividing by $k$,
we obtain $k=\mathbf{m}_{\mathrm{G}}$.
\end{proof}

\begin{proposition}\label{prop104}
Let $\mathrm{F}\in \mathcal{L}$ and $\mathrm{H}\in \mathcal{J}$.
\begin{enumerate}[$($i$)$]
\item\label{prop104.1} The projective object $P_{\mathrm{F}}$ is injective.
\item\label{prop104.2} The object $\mathrm{H}\, L_{\mathrm{F}}$, when non-zero, has a non-zero
projective-injective summand.
\item\label{prop104.3} We have $\mathrm{F}^*L_{\mathrm{F}}=I_{\mathrm{G}}$.
\item\label{prop104.4} The object $\mathrm{H}\, L_{\mathrm{F}}$ is both projective and injective.
\item\label{prop104.5} The object $\mathrm{H}\, L_{\mathrm{F}}$ is indecomposable or zero.
\end{enumerate}
\end{proposition}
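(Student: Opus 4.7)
My plan is to establish (iv) as the core claim, from which (i), (ii), (iii) and (v) all follow as direct consequences. The main tools are the biadjunctions $({}^*\mathrm{H},\mathrm{H},\mathrm{H}^*)$ afforded by weak fiatness of $\cC$, Equation~\eqref{eq1}, and Proposition~\ref{prop101}\eqref{prop101.4}, which restricts possible simples in tops and socles to labels in $\mathcal{L}$.

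I would begin with the socle calculation for $\mathrm{F}^*\,L_{\mathrm{F}}$. Adjunction $(\mathrm{F},\mathrm{F}^*)$ combined with \eqref{eq1} yields, for any $\mathrm{K}\in\mathcal{L}$,
$$\mathrm{Hom}(L_{\mathrm{K}},\mathrm{F}^*\,L_{\mathrm{F}})\cong\mathrm{Hom}(\mathrm{F}\,L_{\mathrm{K}},L_{\mathrm{F}}),$$
which is one-dimensional when $\mathrm{K}=\mathrm{G}$ and zero otherwise. By Proposition~\ref{prop101}\eqref{prop101.4}, only labels in $\mathcal{L}$ can appear in the socle, so $\mathrm{soc}(\mathrm{F}^*\,L_{\mathrm{F}})=L_{\mathrm{G}}$ and hence $\mathrm{F}^*\,L_{\mathrm{F}}\hookrightarrow I_{\mathrm{G}}$. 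For arbitrary $\mathrm{H}\in\mathcal{J}$, the analogous computations of top (via $(\mathrm{H},\mathrm{H}^*)$) and socle (via $({}^*\mathrm{H},\mathrm{H})$), together with Proposition~\ref{prop101}\eqref{prop101.4} and the fact that each right cell of $\mathcal{J}$ meets $\mathcal{L}$ in a unique element by strong regularity, show that top and socle of $\mathrm{H}\,L_{\mathrm{F}}$ are single simples $L_{\mathrm{K}}$; this gives (v). The resulting surjection $P_{\mathrm{K}}\twoheadrightarrow\mathrm{H}\,L_{\mathrm{F}}$ from the projective cover is then upgraded to an isomorphism by a composition-multiplicity identity
$$[\mathrm{H}\,L_{\mathrm{F}}:L_{\mathrm{J}}]=\dim\mathrm{Hom}(\mathrm{H}^*\,P_{\mathrm{J}},L_{\mathrm{F}}),$$
translated via \eqref{eq1} into multiplicities in the decomposition of $\mathrm{H}^*\circ\mathrm{J}$, and matched against the length of $P_{\mathrm{K}}$ using the strong-regularity structure of $\mathcal{J}$. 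This yields projectivity of $\mathrm{H}\,L_{\mathrm{F}}$, and a symmetric argument starting from the socle gives injectivity, completing (iv). Claim (iii) is the special case $\mathrm{H}=\mathrm{F}^*$ of (iv), refined by the socle identification $\mathrm{soc}=L_{\mathrm{G}}$; claim (i) is the case $\mathrm{H}=\mathrm{F}$ with $L_{\mathrm{F}}$ replaced by $L_{\mathrm{G}}$, since $\mathrm{F}\,L_{\mathrm{G}}=P_{\mathrm{F}}$ by \eqref{eq1}; claim (ii) is subsumed by (iv).

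The main obstacle is the composition-multiplicity matching that promotes the socle embedding $\mathrm{F}^*\,L_{\mathrm{F}}\hookrightarrow I_{\mathrm{G}}$ (and its general analogue for $\mathrm{H}$) to an equality: one must carefully track how the summands of $\mathrm{H}^*\circ\mathrm{J}$ lying in two-sided cells strictly below $\mathcal{J}$ behave when acting on $L_{\mathrm{F}}$, and pin down the single $\mathcal{J}$-summand sitting in the intersection of the left cell of $\mathrm{H}^*$ with the right cell of $\mathrm{J}$. The maximality of $\mathcal{J}$ (assumed as a simplifying hypothesis above) is essential for controlling the lower-cell contributions, and strong regularity is what makes the cell-intersection unique and allows the dimensions to match cleanly.
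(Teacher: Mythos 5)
Your opening computation of $\mathrm{soc}(\mathrm{F}^*\,L_{\mathrm{F}})=L_{\mathrm{G}}$ is fine and agrees with the paper, but the two load-bearing steps of your plan have genuine gaps. First, the ``upgrade to an isomorphism''. The adjunction identities you propose are inherently symmetric: the multiplicity of $L_{\mathrm{K}}$ in the top of $\mathrm{H}\,L_{\mathrm{F}}$ equals the multiplicity of $L_{\mathrm{F}}$ in the socle of the companion object ${}^*\mathrm{H}\,L_{\mathrm{K}}$ (note it is ${}^*\mathrm{H}$, not $\mathrm{H}^*$, that is left adjoint to $\mathrm{H}$ in the weakly fiat setting). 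Since that companion object is of exactly the same kind, writing $\mathrm{H}\,L_{\mathrm{F}}\cong kP_{\mathrm{K}}$ and ${}^*\mathrm{H}\,L_{\mathrm{K}}\cong k'P_{\mathrm{K}'}$ you only extract $k=k'$, never $k=1$. Your proposed length comparison of $\mathrm{H}\,L_{\mathrm{F}}$ against $P_{\mathrm{K}}$ reduces both sides to the structure constants $\mathbf{m}_{(-)}$ of $\mathcal{J}$, and closing the count requires knowing that $\mathbf{m}$ is constant on right cells --- this is exactly Proposition~\ref{prop1}, which the paper proves for this purpose (and which is why the older results in \cite{MM1,MM3,MM5} carried an extra numerical hypothesis at precisely this point). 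The paper's proof of part (v) runs through Lemma~\ref{lem10305} and two two-way computations (of $\mathrm{F}^*\tilde{\mathrm{G}}\,L_{\mathrm{F}}$ and of $\dim\mathrm{End}(\mathrm{H}\,L_{\mathrm{F}})$), each invoking Proposition~\ref{prop1}; your sketch never identifies this input, and without it the ``matching'' does not terminate.

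Second, injectivity. You derive claim (i) from claim (iv), but your route to the injectivity half of (iv) --- ``a symmetric argument starting from the socle'' --- presupposes that you can compare $\mathrm{H}\,L_{\mathrm{F}}$ against the composition series of an indecomposable injective $I_{\mathrm{K}}$. Unlike the Cartan numbers $[P_{\mathrm{K}}:L_{\mathrm{J}}]$, the numbers $[I_{\mathrm{K}}:L_{\mathrm{J}}]$ are not accessible from \eqref{eq1} and adjunction until one already knows the injectives coincide with the projectives; so the argument is circular. The paper breaks this circle with a separate idea for (i): start from an \emph{arbitrary} injective object $I$, apply a suitable word $\mathrm{F}\mathrm{G}\mathrm{K}^*$ in $1$-morphisms (each of which preserves injectives, being right adjoint to an exact functor) to manufacture an injective object having $L_{\mathrm{G}}$ in its top, and then observe that $P_{\mathrm{F}}=\mathrm{F}\,L_{\mathrm{G}}$ is a \emph{projective} quotient of that injective object, hence a direct summand of it, hence injective. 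Some argument of this kind is indispensable before (iii)--(v) can be run, and it is absent from your proposal.
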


\begin{proof}
By adjunction, $L_{\mathrm{G}}$ injects into $\mathrm{F}^*L_{\mathrm{F}}$. Consider an injective object 
$I$ and let $L_{\mathrm{K}}$ be a simple quotient of $I$, where $\mathrm{K}\in \mathcal{L}$. Then
$L_{\mathrm{G}}$ is a subquotient of the injective object $\mathrm{K}^*\, I$. As $\mathcal{J}$ is strongly regular,
it follows from Proposition~\ref{prop101}\eqref{prop101.1} that $\mathrm{G}$ annihilates all simples except for
$L_{\mathrm{G}}$. Hence $L_{\mathrm{G}}$ appears in the top of the injective object 
$\mathrm{G}\mathrm{K}^*\, I$. Applying $\mathrm{F}$, we obtain that $P_{\mathrm{F}}$ is a quotient (and hence
a direct summand) of the injective object $\mathrm{F}\mathrm{G}\mathrm{K}^*\, I$ and is therefore injective. 
This proves claim~\eqref{prop104.1}.

Assume that $\mathrm{H}\, L_{\mathrm{F}}\neq 0$. The argument in the previous paragraph shows that 
$\mathrm{H}'\, L_{\mathrm{F}}$ has a non-zero projective-injective summand for some $\mathrm{H}'$ in the same
left cell as $\mathrm{H}$. Claim~\eqref{prop104.2} is now deduced by multiplying the latter on the left with elements in
$\mathcal{J}$ and  using strong regularity of $\mathcal{J}$.

Using adjunction and strong regularity of $\mathcal{J}$, we see that $\mathrm{F}^*L_{\mathrm{F}}$ has simple
socle $L_{\mathrm{G}}$. Therefore claim~\eqref{prop104.3} follows from claim~\eqref{prop104.2}. 
Claim~\eqref{prop104.4} is implied by claim~\eqref{prop104.3} as any inequality 
$\mathrm{H}\, L_{\mathrm{F}}\neq 0$ means,
by strong regularity of $\mathcal{J}$, that $\mathrm{H}$ and $\mathrm{F}^*$ are in the same left cell and
hence $\mathrm{H}\, L_{\mathrm{F}}$ is a direct summand of the projective-injective object
$\mathrm{K}\mathrm{F}^*L_{\mathrm{F}}$ for some $\mathrm{K}$.

It remains to prove claim~\eqref{prop104.5}. By strong regularity of $\mathcal{J}$, there is a unique
element $\tilde{\mathrm{G}}$ in the right cell of $\mathrm{F}$ such that 
$\tilde{\mathrm{G}}\, L_{\mathrm{F}}\neq 0$. This implies that 
$\tilde{\mathrm{G}}\, \hat{L}_{\mathrm{F}}\neq 0 $ and thus
$\tilde{\mathrm{G}}\, \hat{L}_{\tilde{\mathrm{G}}}\neq 0$ by Proposition~\ref{prop101}\eqref{prop101.1},
as $\tilde{\mathrm{G}}$ and $\mathrm{F}$ are in the same right cell. In particular, 
by Proposition~\ref{prop103}, $\tilde{\mathrm{G}}$ is the Duflo involution in its left cell.

Using claim~\eqref{prop104.4} and strong regularity of $\mathcal{J}$, we deduce 
$\tilde{\mathrm{G}}\, L_{\mathrm{F}}\cong k P_{\mathrm{F}}$ for some non-negative integer $k$.
We compute $\mathrm{F}^*\tilde{\mathrm{G}}\, L_{\mathrm{F}}$ in two different ways. On the one hand,
\begin{displaymath}
\mathrm{F}^*\tilde{\mathrm{G}}\, L_{\mathrm{F}}\cong
k\mathrm{F}^*\,P_{\mathrm{F}}\cong 
k\mathrm{F}^*\mathrm{F}\,L_{\mathrm{F}}\cong
k\mathbf{m}_{\mathrm{F}}P_{\mathrm{G}^*},
\end{displaymath}
where the last isomorphism follows from the isomorphism $\mathrm{F}^*\mathrm{F}\cong ({}^*\mathrm{F}\mathrm{F})^*$
and the fact that $\{\mathrm{G}^*\}=\mathcal{L}\cap\mathcal{L}^*$. On the other hand,
$\mathrm{F}^*$ is in the same left cell as $\tilde{\mathrm{G}}^*$ and hence in the same 
left cell as $\tilde{\mathrm{G}}$ as the latter is the Duflo involution. Therefore, Lemma~\ref{lem10305}
and claim~\eqref{prop104.3} give
\begin{displaymath}
\mathrm{F}^*\tilde{\mathrm{G}}\, L_{\mathrm{F}}\cong
\mathbf{m}_{\tilde{\mathrm{G}}}P_{\mathrm{G}^*}.
\end{displaymath}
From Proposition~\ref{prop1}, we obtain $\mathbf{m}_{\tilde{\mathrm{G}}}=\mathbf{m}_{\mathrm{F}}$ 
which yields $k=1$ and proves claim~\eqref{prop104.5} in the case $\mathrm{H}=\tilde{\mathrm{G}}$.

Now, in the general case, assume $\mathrm{H}\, L_{\mathrm{F}}\cong k P_{\mathrm{K}}$ for some
$\mathrm{K}\in\mathcal{L}$ and a positive integer $k$
(note that $\mathrm{K}$ and $\mathrm{H}$ are then in the same right cell). 
Then  ${}^*\mathrm{H}\mathrm{H}\, L_{\mathrm{F}}\neq 0$
by adjunction and hence ${}^*\mathrm{H}\mathrm{H}\cong \mathbf{m}_{\mathrm{H}}\tilde{\mathrm{G}}$. 
Let us now compute the
dimension of $\mathrm{End}(\mathrm{H}\, L_{\mathrm{F}})$. On the one hand, by adjunction and the fact that
$\tilde{\mathrm{G}}\, L_{\mathrm{F}}\cong P_{\mathrm{F}}$, proved in the previous paragraph, this dimension
equals $\mathbf{m}_{\mathrm{H}}$. On the other hand, it equals 
$k^2\dim\mathrm{End}(P_{\mathrm{K}})$ which, in turn, by adjunction, equals $k^2\mathbf{m}_{\mathrm{K}}$.
Proposition~\ref{prop1} implies $\mathbf{m}_{\mathrm{K}}=\mathbf{m}_{\mathrm{H}}$ and hence $k=1$,
completing the proof.
\end{proof}

Proposition~\ref{prop104} shows that indecomposable $1$-morphisms in $\mathcal{J}$ act, under 
$\mathbf{C}_{\mathcal{L}}$, as indecomposable projective functors for some self-injective algebra.
In analogy to \cite[Theorem~43]{MM1}, this implies the following.

\begin{theorem}\label{thm107} 
Let $\cC$ be a weakly fiat $2$-category and $\mathcal{J}$ a strongly regular  two-sided cell in $\cC$.
\begin{enumerate}[$($i$)$]
\item\label{thm107.1} For any left cell $\mathcal{L}$ in $\mathcal{J}$, the cell $2$-representation
$\mathbf{C}_{\mathcal{L}}$ is strongly simple in the sense of \cite[Section~6.2]{MM1}.
\item\label{thm107.2} If $\mathcal{L}$ and $\mathcal{L}'$ 
are two right cells in $\mathcal{J}$, then the cell $2$-representations
$\mathbf{C}_{\mathcal{L}}$ and $\mathbf{C}_{\mathcal{L}'}$
are equivalent.
\end{enumerate}
\end{theorem}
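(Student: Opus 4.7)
The plan is to mirror the proof of \cite[Theorem~43]{MM1} using the weakly fiat analogues established in Propositions~\ref{prop101}--\ref{prop104}. The conceptual content is that once one knows the indecomposable $1$-morphisms in $\mathcal{J}$ act in $\mathbf{C}_{\mathcal{L}}$ as indecomposable projective functors over a self-injective algebra, strong simplicity is automatic, and all cell $2$-representations attached to left cells of $\mathcal{J}$ are equivalent essentially by construction. (I read the statement of \eqref{thm107.2} as concerning two left cells $\mathcal{L},\mathcal{L}'$, since $\mathbf{C}_{\mathcal{L}}$ is indexed by left cells; the strongly regular hypothesis makes this interpretation natural.)

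For \eqref{thm107.1}, I would first invoke Proposition~\ref{prop104}\eqref{prop104.1} to conclude that $\mathbf{C}_{\mathcal{L}}(\mathtt{j})$ is equivalent to the module category of a self-injective algebra $A_{\mathtt{j}}$ for each $\mathtt{j}\in\cC$. Next, by Proposition~\ref{prop104}\eqref{prop104.4}--\eqref{prop104.5}, each non-zero $\mathrm{H}\,L_{\mathrm{F}}$ for $\mathrm{H}\in\mathcal{J}$ is indecomposable projective-injective, and combined with the adjunction computation of Proposition~\ref{prop104}\eqref{prop104.3} this pins down $\mathrm{H}$ as tensoring with a specific projective bimodule, reproducing the matrix description from \cite[Lemma~45]{MM1}. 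The numerical hypothesis of \cite[Theorem~43]{MM1} that multiplicities $\mathbf{m}_{\mathrm{F}}$ be constant on right cells, which originally had to be assumed, is now automatic by Proposition~\ref{prop1}. Verifying the remaining axioms of strong simplicity in the sense of \cite[Section~6.2]{MM1} then reduces to a line-by-line transfer of the fiat arguments, since all the required ingredients (indecomposability of the action on simples, adjunction with $\mathrm{F}^*$, faithfulness, and the matrix of composition multiplicities) are now in place.

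For \eqref{thm107.2}, given two left cells $\mathcal{L},\mathcal{L}'$ in $\mathcal{J}$, part~\eqref{thm107.1} identifies both $\mathbf{C}_{\mathcal{L}}$ and $\mathbf{C}_{\mathcal{L}'}$ with $2$-representations of the same abstract shape: module categories of self-injective algebras on which $\mathcal{J}$ acts by indecomposable projective functors. To produce an explicit equivalence, I would pick via strong regularity the unique $\mathrm{H}$ lying in the intersection of $\mathcal{L}'$ with the right cell of the Duflo involution $\mathrm{G}_{\mathcal{L}}$ (existing by Proposition~\ref{prop102}\eqref{prop102.3} and Proposition~\ref{prop103}), and use multiplication by $\mathrm{H}$ to define a $2$-natural transformation $\mathbf{C}_{\mathcal{L}}\to\mathbf{C}_{\mathcal{L}'}$, whose inverse is induced by $^{*}\mathrm{H}$; the isomorphism $\mathrm{H}\,L_{\mathrm{G}_{\mathcal{L}}}\cong P_{\mathrm{H}}$ (together with its analogue on the other side) coming from Proposition~\ref{prop102}\eqref{prop102.2} and Proposition~\ref{prop104}\eqref{prop104.5} shows that this transformation is an equivalence.

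The main obstacle is bookkeeping around the weak involution $*$: several arguments in \cite{MM1} silently use $^{*}(\mathrm{F}^{*})=\mathrm{F}$, which fails in the weakly fiat setting. The substitutes are Proposition~\ref{prop101}\eqref{prop101.5} and Proposition~\ref{prop103}, which let one use $\mathrm{F}^{*}$ and $^{*}\mathrm{F}$ symmetrically and ensure both $\mathrm{F}^{*}\,L_{\mathrm{F}}$ and $^{*}\mathrm{F}\,L_{\mathrm{F}}$ are non-zero projective-injective; once these replacements are inserted at every adjunction step, the fiat proof goes through verbatim.
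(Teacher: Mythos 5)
Your proposal is correct and follows essentially the same route as the paper, which likewise deduces the theorem from Proposition~\ref{prop104} (indecomposable $1$-morphisms in $\mathcal{J}$ acting as indecomposable projective functors over a self-injective algebra) together with a transfer of the proof of \cite[Theorem~43]{MM1}, with Proposition~\ref{prop1} supplying the numerical condition. Your reading of part~(ii) as concerning left cells is also the intended one, since $\mathbf{C}_{\mathcal{L}}$ is defined only for left cells.
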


Using this, and following the arguments in \cite{MM3}, we can generalize \cite[Theorem~13]{MM3}
to all weakly fiat $2$-categories (in the notation of Section~\ref{s4}).

\begin{theorem}\label{thm105}
Let $\cC=\cC_{\mathcal{J}}$ be a skeletal weakly fiat $\mathcal{J}$-simple $2$-category for a 
strongly regular $\mathcal{J}$. Then $\cC$ is biequivalent to $\cC_{A,X}$ for appropriate 
self-injective $A$ and $X\subset Z(A)$.
\end{theorem}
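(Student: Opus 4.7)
The plan is to adapt the proof of \cite[Theorem~13]{MM3} to the weakly fiat setting, using the results on cell $2$-representations already established in this appendix as substitutes for their fiat predecessors.

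First I would fix a left cell $\mathcal{L}$ in $\mathcal{J}$ and consider the cell $2$-representation $\mathbf{C}_{\mathcal{L}}$. Since $\cC$ is $\mathcal{J}$-simple, $\mathbf{C}_{\mathcal{L}}$ is faithful, and by Theorem~\ref{thm107}\eqref{thm107.1} it is strongly simple. For each $\mathtt{j}\in \cC$, Proposition~\ref{prop104} tells us that in the abelianization $\overline{\mathbf{C}}_{\mathcal{L}}(\mathtt{j})$ every indecomposable projective is injective, and every non-zero $\mathrm{H}\,L_{\mathrm{F}}$ with $\mathrm{H}\in\mathcal{J}$ is indecomposable projective-injective. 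This shows that $\overline{\mathbf{C}}_{\mathcal{L}}(\mathtt{j})\simeq A_{\mathtt{j}}\text{-}\mathrm{mod}$ for a basic self-injective finite-dimensional algebra $A_{\mathtt{j}}$, and indecomposable $1$-morphisms in $\mathcal{J}\cap \cC(\mathtt{i},\mathtt{j})$ act as indecomposable projective functors on the $A_{\mathtt{i}}$-$A_{\mathtt{j}}$ side. Set $A:=\bigoplus_{\mathtt{j}} A_{\mathtt{j}}$ and let $X_{\mathtt{j}}\subset Z(A_{\mathtt{j}})$ be the image of $\mathrm{End}_{\cC}(\mathbbm{1}_{\mathtt{j}})$ under the action, so that $X=(X_{\mathtt{j}})_{\mathtt{j}}$ gives the data needed to define $\cC_{A,X}$ as in Section~\ref{s4.1}.

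Next I would construct the candidate biequivalence $\Phi\colon \cC\to \cC_{A,X}$ as the action on $\mathbf{C}_{\mathcal{L}}$. On objects $\Phi$ sends $\mathtt{j}$ to the connected component of $\cC_{A,X}$ indexed by $A_{\mathtt{j}}$; on $1$-morphisms it sends each indecomposable in $\mathcal{J}$ to the corresponding $\mathrm{F}_{ij}$, where the indices are read off from the projective functor description; and on $2$-morphisms it applies the underlying natural transformation. To ensure the matching of indices is coherent across all left cells, I would invoke Theorem~\ref{thm107}\eqref{thm107.2}, which identifies cell $2$-representations attached to distinct right cells and thus pins down the indecomposable projective functors uniformly. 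Essential surjectivity on objects and on $1$-morphisms is then built into the construction of $\cC_{A,X}$, since by definition every $\mathrm{F}_{ij}$ and every object of $\cC_{A,X}$ appears in the image.

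For fully-faithfulness on $2$-morphisms I would use that $\mathbf{C}_{\mathcal{L}}$ is faithful on $2$-morphisms between $1$-morphisms in $\mathcal{J}$ (a consequence of $\mathcal{J}$-simplicity together with Proposition~\ref{prop101}\eqref{prop101.5}), giving injectivity. Surjectivity reduces to a dimension count: on the $\cC_{A,X}$ side $\dim\mathrm{Hom}(\mathrm{F}_{ij},\mathrm{F}_{kl})=\dim(e_k A_{\mathtt{j}} e_i)\cdot\dim(e_j A_{\mathtt{i}} e_l)$, while on the $\cC$ side the analogous dimension can be computed from $\dim\mathrm{End}(\mathrm{H}\,L_{\mathrm{G}_{\mathcal{L}}})$ via adjunction, and this equals $\mathbf{m}_{\mathrm{H}}$ by Proposition~\ref{prop104} combined with Proposition~\ref{prop1}. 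Matching these two counts across all right cells, using the Nakayama bijection to encode the adjunction $(\mathrm{F}^*,\mathrm{F})$, yields fullness. The main obstacle is precisely this bookkeeping: verifying that, away from the fiat case where $*$ is an honest involution, the weak equivalence $*$ still matches the Nakayama-twisted adjunction in $\cC_{A,X}$ on the nose. This is handled by feeding Proposition~\ref{prop104}\eqref{prop104.3} into the counting argument, exactly as in the fiat case but with $\sigma$ no longer trivial.
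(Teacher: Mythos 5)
Your proposal is correct and follows essentially the same route as the paper, which proves Theorem~\ref{thm105} precisely by transporting the argument of \cite[Theorem~13]{MM3} to the weakly fiat setting, with Proposition~\ref{prop104} and Theorem~\ref{thm107} supplying the needed facts that indecomposable $1$-morphisms of $\mathcal{J}$ act as indecomposable projective functors over a self-injective algebra and that $\mathbf{C}_{\mathcal{L}}$ is strongly simple. Your explicit attention to the Nakayama twist coming from the non-involutive $*$ is exactly the only genuinely new point in the weakly fiat case.
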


Similarly, we obtain the following generalization of \cite[Theorem~8]{MM5}.

\begin{theorem}\label{thm106}
Let $\cC$ be a weakly fiat $2$-category such that all two-sided cells in $\cC$ are strongly regular. 
Then any simple transitive $2$-representation of $\cC$ is equivalent to a cell $2$-representation.
\end{theorem}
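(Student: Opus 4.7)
The plan is to follow the strategy of \cite[Theorem~18]{MM5} mutatis mutandis, with the weakly fiat adjustments supplied by Propositions~\ref{prop101}--\ref{prop104} and with the numerical condition now automatic by Proposition~\ref{prop1}. Let $\mathbf{M}$ be a simple transitive $2$-representation of $\cC$. First, I would locate the ``apex'' of $\mathbf{M}$: because $\mathbf{M}$ is non-zero and transitive, there is at least one two-sided cell whose $1$-morphisms do not annihilate $\mathbf{M}$, and the two-sided preorder on $\mathcal{S}(\cC)$ is a partial order on two-sided cells, so we may pick a two-sided cell $\mathcal{J}$ which does not annihilate $\mathbf{M}$ but such that every strictly $\geq_J$-larger cell does. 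By assumption $\mathcal{J}$ is strongly regular.

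Next I would pick an indecomposable object $L$ in some $\overline{\mathbf{M}}(\mathtt{j})$ (a simple in the abelianization), together with a left cell $\mathcal{L}\subset \mathcal{J}$ with $\mathtt{i}_{\mathcal{L}}=\mathtt{j}$, chosen so that the Duflo involution $\mathrm{G}=\mathrm{G}_{\mathcal{L}}$ does not annihilate $L$; existence of such a pair follows from the apex property combined with Proposition~\ref{prop102}\eqref{prop102.2} and Proposition~\ref{prop101}\eqref{prop101.1}, exactly as in \cite[Section~5]{MM5}. Sending $\mathrm{F}\mapsto \mathrm{F}\,L$ for $\mathrm{F}\in\mathcal{L}$ then defines, via the principal $2$-representation and passage to the quotient by the unique maximal ideal of Section~\ref{s2.1}, a morphism of $2$-representations $\Phi\colon \mathbf{C}_{\mathcal{L}}\to \mathbf{M}$ (well-definedness of the quotient uses that $\mathbf{M}$, being simple transitive, kills every ideal of $\mathbf{G}_{\mathbf{M}}(\{L\})$).

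To conclude, I would show that $\Phi$ is an equivalence. Essential surjectivity follows from simple transitivity of $\mathbf{M}$, once one checks that the image of $\Phi$ is non-zero and $\cC$-stable. For fullness and faithfulness, the key input is the detailed structure of $\mathbf{C}_{\mathcal{L}}$ provided by Proposition~\ref{prop104}: in a cell $2$-representation of a strongly regular cell, indecomposable $1$-morphisms in $\mathcal{J}$ act as indecomposable projective functors for a self-injective algebra, with $\mathrm{F}\,L_{\mathrm{G}}\cong P_{\mathrm{F}}$ and $\mathrm{F}^*\,L_{\mathrm{F}}\cong I_{\mathrm{G}}$. The same computation, translated to $\mathbf{M}$ through $\Phi$, together with Proposition~\ref{prop1} (which guarantees the matching of multiplicities $\mathbf{m}_{\mathrm{F}}$ across a right cell and so fixes the dimensions of the relevant Hom-spaces), shows that $\Phi$ identifies $\mathrm{Hom}$-spaces on both sides. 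As in \cite[Theorem~18]{MM5}, one then deduces that $\Phi$ is an equivalence of $2$-representations.

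The main obstacle will be the adjustment from fiat to weakly fiat: whenever the original argument used the involutivity of $*$ (most notably in identifying ${}^*\mathrm{F}$ with $\mathrm{F}^*$ inside adjunction computations and in reading off Duflo involutions), one must instead use the asymmetric pair of adjunctions $(\mathrm{F},\mathrm{F}^*)$ and $({}^*\mathrm{F},\mathrm{F})$ and the refined statements of Proposition~\ref{prop103} about how Duflo involutions detect left cells via both $\mathrm{F}\mapsto \mathrm{F}^*$ and $\mathrm{F}\mapsto {}^*\mathrm{F}$. The point is that these propositions have been formulated precisely so that every step in the proof of \cite[Theorem~18]{MM5} survives, with the numerical condition used there now being redundant by Remark~\ref{rem2}.
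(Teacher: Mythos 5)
Your proposal matches the paper's treatment: the paper gives no explicit proof of this theorem, merely asserting that it follows from the argument of \cite[Theorem~18]{MM5} with the fiat-specific inputs replaced by the weakly fiat versions established in Propositions~\ref{prop101}--\ref{prop104} and Theorem~\ref{thm107}, and with the numerical hypothesis discharged by Proposition~\ref{prop1} --- exactly the route you describe. Your sketch is, if anything, more detailed than the paper's one-sentence justification.
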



\noindent
Volodymyr Mazorchuk, Department of Mathematics, Uppsala University,
Box 480, SE-751 06, Uppsala, SWEDEN,\\ {\tt mazor\symbol{64}math.uu.se};
http://www.math.uu.se/$\tilde{\hspace{1mm}}$mazor/.

\noindent
Vanessa Miemietz, School of Mathematics, University of East Anglia,\\
Norwich NR4 7TJ, UK, \\ {\tt v.miemietz\symbol{64}uea.ac.uk};
http://www.uea.ac.uk/$\tilde{\hspace{1mm}}$byr09xgu/.

\end{document}